\newcommand{\p}{\mathfrak{p}}
\renewcommand{\bar}[1]{\overline{#1}}
\DeclareMathOperator{\coker}{Coker}
\renewcommand{\epsilon}{\varepsilon}
\renewcommand{\hat}[1]{\widehat{#1}}
\DeclareMathOperator{\hgt}{ht}  			
\newcommand{\kk}{\mathbbm{k}}					        
\renewcommand{\L}{\mathcal{L}}
\DeclareMathOperator{\lcm}{lcm}
\DeclareMathOperator{\pd}{pd}  			
\renewcommand{\phi}{\varphi}
\DeclareMathOperator{\rank}{rank}
\DeclareMathOperator{\reg}{reg}			
\DeclareMathOperator{\rk}{rk}
\renewcommand{\setminus}{\smallsetminus}
\newcommand{\term}[1]{\textbf{\textsf{#1}}}
\renewcommand{\tilde}[1]{\widetilde{#1}}
\DeclareMathOperator{\Tor}{Tor}
\newtheorem{thm}{Theorem}[section]
\newtheorem{lemma}[thm]{Lemma}
\newtheorem{prop}[thm]{Proposition}
\newtheorem{cor}[thm]{Corollary}
\theoremstyle{definition}
\newtheorem{defn}[thm]{Definition}
\newtheorem{example}[thm]{Example}
\newtheorem{rmk}[thm]{Remark}
\newtheorem{question}[thm]{Question}
\numberwithin{equation}{section}
\numberwithin{figure}{section}
\title[Properties of LCM Lattices]{Properties of LCM Lattices of Monomial Ideals} 
\date{\today}
\author[M. Dorang]{Matthew Dorang}
\address{Purdue University, Department of Mathematics, West Lafayette, IN, USA}
\email{mdorang@purdue.edu}
\author[J. McCullough]{Jason McCullough}
\address{Iowa State University, Department of Mathematics, Ames, IA, USA}
\email{jmccullo@iastate.edu}
\begin{document}

\subjclass[2020]{Primary: 06B10,  13F55 ; Secondary: 06C05, 13D02, 13C70, 05E40 }

\keywords{Monomial ideal, free resolution, LCM lattice, Cohen-Macaulay ring, edge ideal, projective dimension}

\begin{abstract} LCM lattices were introduced by Gasharov, Peeva, and Welker as a way to study minimal free resolutions of monomial ideals.  All LCM lattices are atomic and all atomic lattices arise as the LCM lattice of some monomial ideal.  We systematically study other lattice properties of  LCM lattices.  For lattices associated to the edge ideal of a graph, we completely characterize the many standard lattice properties  in terms of the associated graphs: Boolean, modular,  upper semimodular, lower semimodular, supersolvable, coatomic, and complemented; edge ideals with graded LCM lattices were previously characterized by Nevo and Peeva as those associated to gap-free graphs.  For arbitrary monomial ideals, we prove the Cohen-Macaulayness of minimal monomial ideals associated to modular lattices.  We also prove separate necessary and sufficient lattice conditions for when the projective dimension of a monomial ideal matches the height of its LCM lattice.   Finally, we show that LCM lattices of Gorenstein edge ideals are coatomic and raise questions about the lattice properties of arbitrary Gorenstein monomial ideals.
\end{abstract}

\maketitle

\begin{spacing}{1.1}

\section{Introduction}

Let $\kk$ be a field and let $S = \kk[x_1,\ldots,x_n]$ denote a polynomial ring over $\kk$.  
Free resolutions of graded ideals and modules over $S$ have been studied intensely since the time of Hilbert.  Even for monomial ideals, the structure of minimal free resolutions is not entirely understood.  Resolutions of monomial ideals are of particular interest both because of their combinatorial nature and because deformation arguments lead to statements about arbitrary graded ideals; see e.g. \cite[Theorem 22.9]{Peeva11}.  
Rather than computing all of the differentials in a minimal free resolution, one can instead compute just the graded Betti numbers 
$\beta_{i,j}^S(M) = \dim_\kk \Tor_i^S(M,\kk)_j$, 
which count the ranks of the graded free modules in a minimal free resolution of $M$.  
Gasharov, Peeva, and Welker introduced the LCM lattice of a monomial ideal \cite{GPW99} and proved how to compute graded Betti numbers in terms of the simplicial homology of order complexes of intervals in the LCM lattice.  Thus, understanding the structure of the LCM lattice of a monomial ideal provides insights into the structure of its minimal free resolution.

In this paper, we initiate a thorough study of lattice properties of LCM lattices of monomial ideals.  While classifying arbitrary monomial ideals whose LCM lattices have specific properties may be a very difficult problem, we accomplish this for squarefree monomial ideals generated in degree 2, which can be thought of as edge ideals of a graph.  Namely, given a finite, simple graph $G = (V,E)$ on vertex set $V = \{1,\ldots,n\}$, its edge ideal $I(G)$ is the ideal
\[I(G) = \left( x_ix_j \mid \{i,j\} \in E\right) \subseteq \kk[x_1,\ldots,x_n].\]
Since the LCM lattice of a monomial ideal and its polarization are isomorphic \cite[Proposition 6.1]{Phan06}, we get information about the LCM lattices of all monomial ideals generated in degree 2.  It is known that all LCM lattices are atomic and all atomic lattices arise as LCM lattices of some monomial ideal, though not uniquely.  There is a process by which one can associate a unique minimal squarefree monomial ideal to an atomic lattice by way of Phan's algorithm; see Theorem~\ref{Phan}.  Nevo and Peeva previously gave a sufficient condition for edge ideals with graded LCM lattices as those associated to gap-free graphs.  

One main purpose of this paper is to classify standard lattice properties of LCM lattices of edge ideals in terms of the associated graphs.  In Section~\ref{section:edge:ideals}, we characterize the following properties for such LCM lattices: Boolean, modular, upper semimodular, lower semimodular, supersolvable, coatomic, and complemented; see Theorems~\ref{GradedGraph}, \ref{USS},  \ref{thm:supersolvable},  \ref{thm:lsm}, \ref{thm:coatomic:lcm:lattices}, Corollary~\ref{BooleanEdgeIdeal}, and Proposition~\ref{prop:complemented:LCM:lattice}.  We summarize these results in a Venn diagram of possible combinations.  

In the case of arbitrary monomial ideals, it is difficult to decide if a given monomial ideal is minimal without running Phan's algorithm.  Nonetheless, we do classify monomial ideals with Boolean LCM lattices in Theorem~\ref{Boolean:LCM:lattice}.  We also prove that minimal monomial ideals of modular lattices are Cohen-Macaulay in Theorem~\ref{Modular}.  We consider when the projective dimension of a monomial ideal matches the height of its LCM lattice in Section~\ref{sec:pdim}.  Finally, in Section~\ref{sec:Gorenstein}, we consider the properties of LCM lattices of Gorenstein monomial ideals.

\section{Background}\label{sec:background}
In this section, we review the basic results and terminology needed in the rest of this paper.

\subsection{Lattices}

 A \term{lattice} is a partially ordered set (poset) $(L,\leq)$, where every pair of elements admits a greatest lower bound, or meet, and a least upper bound, or join.  We denote the meet and join as $x \wedge y$ and $x \vee y$, respectively. In this paper we deal exclusively with finite lattices.  A lattice is \term{bounded} if it admits both a maximum element, denoted by $\hat{1}$, and minimum element,  denoted by $\hat{0}$.  All finite lattices are bounded.

 An element $y \in L$ is said to \term{cover} an element $x \in L$ if $x < y$ and, whenever $x \leq z \leq y$ for some element $z \in L$, either $x = z$ or $y = z$.  A lattice may be visualized by way of its \term{Hasse diagram}, in which larger elements are placed above smaller ones with line segements between an element and any it covers.
    A totally ordered subset $C$ of elements in a lattice is called a \term{chain}. 
    A chain $C$ is called \term{saturated} if between any elements $x,y \in C$ such that $x < y$, either $y$ covers $x$, or there exists some $z \in C$ with $x < z < y$. 
    We say that a maximal chain is of length $n$ if there are $n+1$ elements in $M$.
    The \term{height} of $L$, which we denote by $\operatorname{ht}(L)$, is the maximum of lengths of chains of $L$.

 An element $x \in L$ is called an \term{atom} if it covers $\hat{0}$. It is called a \term{coatom} if it is covered by $\hat{1}$. 
    Let $\operatorname{at}(L)$ denote the set of atoms of $L$, and $\operatorname{coat}(L)$ the set of coatoms. Following Stanley \cite{Stanley11}, we call $L$ \term{atomic} if every element can be written as a finite join of atoms and \term{coatomic} if every element is a finite meet of coatoms; other authors call such lattices atomistic and coatomistic, respectively.  An element $x \in L$ is called \term{meet-irreducible} if $x \neq \hat{1}$ and whenever $x = a \wedge b$ for $a,b \in L$, we have that $x = a$ or $x = b$. Likewise, an element $x \in L$ is called \term{join-irreducible} if $x \neq \hat{0}$ and whenever $x = a\vee b$ for $a,b \in L$, we have that $x = a$ or $x = b$.
    We let $\operatorname{mi}(L)$ denote the set of meet-irreducible elements.  Observe also that in a finite atomic lattice, the join-irreducibles are precisely the atoms.

    A bounded lattice $L$ is called \term{complemented} if for every element $x \in L$, there exists a \term{complement} element, that is, an element $y \in L$ such that $x \wedge y = \hat{0}$ and $x \vee y = \hat{1}$. $L$ is called \term{uniquely complemented} if there is exactly one complement element for each element. 

    A lattice $L$ is called \term{graded} if there exists an order-preserving function $\rho : L \to \mathbb{Z}$, where for all $x,y \in L$ with $y$ covering $x$, we have that $\rho(y) = \rho(x) + 1$.  For a finite lattice, we write $\rk(x) = \rho(x)$, called the \term{rank} of $x$, and can insist that $\rk(\hat{0}) = 0$.

    Now suppose that $L$ is a graded lattice.  Then $L$ is \term{upper semimodular} if $\rk(x) + \rk(y) \geq \rk(x \wedge y) + \rk(x \vee y)$ for all $x,y, \in L$; $L$ is \term{lower semimodular} if $\rk(x) + \rk(y) \leq \rk(x \wedge y) + \rk(x \vee y)$ for all $x,y, \in L$; and $L$ is \term{modular} if $\rk(x) + \rk(y) = \rk(x \wedge y) + \rk(x \vee y)$ for all $x,y, \in L$.  Clearly $L$ is modular if and only if it is both upper and lower semimodular.  $L$ is said to be \term{supersolvable} if there exists a maximal chain $M$, where for all $m \in M$ and $x \in X$,
 $
        \rk(x) + \rk(m) = \rk(x \wedge m) + \rk(x \vee m).
   $
    Such a maximal chain is called a \term{modular chain}.  The lattice $L$ is \term{geometric} if it is atomic and upper semimodular. A lattice is geometric  if and only if it is the lattice of flats of a matroid, a combinatorial object generalizing the notion of linear independence in vector spaces; see e.g.  \cite[p. 106]{Birkhoff48}.  We shall not need the language of matroids much and refer the interested reader to \cite{Welsh76}.

    A lattice $L$ is said to be \term{distributive} if for all $x,y,z \in L$, we have that
$
        x \wedge (y\vee z) = (x\wedge y)\vee (x\wedge z).
$
    A bounded lattice $L$ is called \term{Boolean} if it is both distributive and uniquely complemented.  A finite, atomic, distributive lattice is Boolean \cite[p. 292]{Stanley11} as is a finite, atomic, uniquely complemented lattice \cite[p. 170]{Birkhoff48}.  Other implications among finite atomic lattices are summarized in Figure~\ref{fig:lattice:implications}.  (USM and LSM denote upper semimodular and lower semimodular, respectively.)

       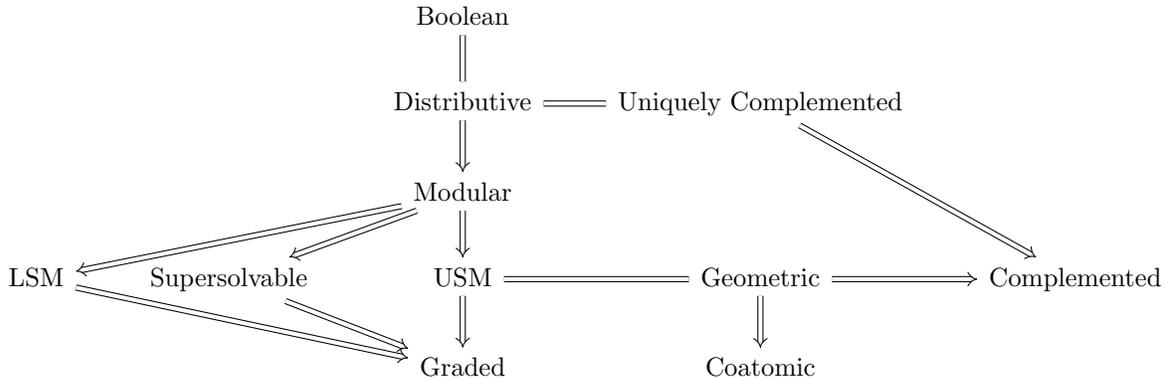
\begin{figure}[h]
    \begin{tikzcd}
&\text{Boolean} \arrow[d, Rightarrow, no head]                                 &      &                                  &                                                     \\
&\text{Distributive} \arrow[d, Rightarrow] \arrow[rr, Rightarrow, no head]     &   &                                      \text{Uniquely Complemented} \arrow[d, Rightarrow] \\
&\text{Modular} \arrow[d, Rightarrow] \arrow[dl, Rightarrow]   \arrow[rd, Rightarrow]                             &     &              \text{Complemented}                     &                                                     \\
\text{LSM} \arrow[rd, Rightarrow]&\text{Supersolvable} \arrow[d, Rightarrow] & \text{USM} \arrow[ld, Rightarrow] \arrow[r, Rightarrow, no head] & \text{Geometric}  \arrow[d, Rightarrow] \arrow[u, Rightarrow] &                                  \\
&\text{Graded}                                                       &  &  \text{Coatomic}                                    &                                                    
\end{tikzcd}
\caption{Diagram of Implications for Finite Atomic Lattices}\label{fig:lattice:implications}
\end{figure}

\noindent The one implication in Figure~\ref{fig:lattice:implications} that is not immediate from the definitions is that geometric lattices are complemented; the proof for this can be found in \cite[p. 288]{Stanley11}, where geometric implies a stronger condition called relatively complemented, which we do not define here. 

Given two lattices $L_1, L_2$, their product $L_1 \times L_2$ is naturally a lattice with order determined by $(x_1,y_1) \le (x_2, y_2)$ if and only if $x_1 \le x_2$ in $L_1$ and $y_1 \le y_2$ in $L_2$.

\subsection{Monomial Ideals and Free Resolutions}

Let $\kk$ be a field and let $S = \kk[x_1,\ldots,x_n]$ be a polynomial ring over $\kk$.  We view $S$ as a standard graded ring with $\deg(x_i) = 1$ for all $i$.  Setting $S_i$ to be the $\kk$-vector space of all homogeneous polynomials, i.e. forms, of degree $i$, we can write $S = \bigoplus_{i \ge 0} S_i$.  Thus, $S$ is a $\mathbb{Z}$-graded ring with $S_i S_j \subseteq S_{i+j}$ for all $i, j \ge 0$.  We will also consider $S$ as a $\mathbb{Z}^n$-graded, or multigraded, ring and identify graded pieces of $S$ with the single monomial generator of that multidegree.  A module $M$ of $S$ is graded if $M = \bigoplus_{i \ge 0} M_i$ and $S_iM_j \subseteq M_{i+j}$.

A finitely generated graded module has a \term{minimal graded free resolution} $F_\bullet$ of $S$, meaning $F_i$ is a finitely generated graded free $S$-module for all $i$, the differentials $d_i:F_i \to F_{i-1}$ are graded maps of degree 0, $\mathrm{Im}(d_i) \subseteq (x_1,\ldots,x_n)F_{i-1}$, and $\coker(d_1) \cong M$.  Such resolutions are unique up to isomorphism of complexes and of length at most $n$ by Hilbert's Syzygy Theorem \cite[Theorem 15.2]{Peeva11}.  Since the modules $F_i$ are finitely generated and graded, we can write $F_i = \bigoplus_{j \in \mathbb{Z}} S(-j)^{\beta_{ij}(M)}$.  The numbers $\beta_{ij}(M)$, which can also be defined as $\beta_{ij}(M) = \dim_\kk \Tor_i^S(M,\kk)_j$, are the \term{graded Betti numbers} of $M$ and are invariants usually displayed in the Betti table in which $\beta_{ij}(M)$ is placed in column $i$ and row $j-i$.  The \term{projective dimension} of $M$ is defined as $\pd_S(M) = \max\{i\mid \beta_{ij}(M) \neq 0 \text{ for some } j\}$, which measures the length of the resolution.

The structure of free resolutions is of great interest, even for monomial ideals.  Note that monomial ideals are precisely the $\mathbb{Z}^n$-graded ideals of $S$.  We can then define the \term{multigraded Betti numbers} of $M$ as $\beta_{i,m}(M) = \dim_\kk \Tor_i^S(M,\kk)_m$, where $m$ is a monomial of $S$ which we identify with its exponent vector in $\mathbb{Z}^n$.  The coarser graded Betti numbers can be recovered from the multigraded ones via 
\[\beta_{ij}(M) = \sum_{\substack{m \in \mathbb{Z}^n\\\deg(m) = j}} \beta_{i,m}(M).\]

Gasharov, Peeva, and Welker \cite{GPW99} defined the LCM lattice of a monomial ideal to study its free resolution.
By \cite[Proposition 3.3.1]{Stanley11}, to describe the LCM lattice, we need only define the poset structure and the joins.  Given a monomial ideal $I$ with minimal monomial generators $m_1,\ldots,m_q$, its \term{LCM lattice} $\L_I$ is the set of least common multiples of subsets of the generators, ordered by divisibility.  Joins are defined by taking lcm, and the lcm of the empty set is 1, which serves as the bottom element of the lattice.  Not only is $\L_I$ a lattice, it follows immediately from the definition that it is atomic.  Phan \cite{Phan06} showed that every atomic lattice may be realized as the LCM lattice of some monomial ideal; also see \cite[Corollary 8.4]{Berglund07}.  While this association is not bijective, there is a unique (up to isomorphism) squarefree monomial ideal whose LCM lattice matches a given finite atomic lattice.  The algorithm for constructing it goes as follows: label the meet-irreducible elements by variables; then label the atoms by the product of variables correspond to meet-irreducibles not above the chosen atom.  The monomial ideal generated by the monomials associated to the atoms then has LCM lattice isomorphic to the starting lattice.  This is summarized in the following theorem.

\begin{thm}[Phan {\cite[Theorem 5.1]{Phan06}, see also \cite[Theorem 58.6]{Peeva11}}]\label{Phan}
    Fix a field $\kk$. 
    Suppose $L$ is a finite, atomic lattice. Define
    \begin{align*}
        M(L) := \left( \prod_{\substack{a \in \operatorname{mi}(L)\\ b \not\leq a}}x_a \mid b \in \operatorname{at}(L)\right) \subset \kk[x_a \mid a \in \operatorname{mi}(L)] =: S(L).
    \end{align*}
    Then $\L_{M(L)} \cong L$.
\end{thm}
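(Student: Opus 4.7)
The plan is to construct an explicit isomorphism $\phi : L \to \L_{M(L)}$. For each $c \in L$, set
\[
\phi(c) := \prod_{\substack{a \in \operatorname{mi}(L)\\ c \not\le a}} x_a.
\]
Because $L$ is finite and atomic, $c = \bigvee\{b \in \operatorname{at}(L) : b \le c\}$, and for any $a \in \operatorname{mi}(L)$ one has $c \le a$ precisely when every atom below $c$ lies below $a$. Hence $\phi(c) = \lcm\{m_b : b \in \operatorname{at}(L),\, b \le c\}$, so $\phi$ really does land in $\L_{M(L)}$, and $\phi(b) = m_b$ on atoms. I would also check at this stage that the monomials $m_b$ are pairwise incomparable in divisibility, so that they are exactly the minimal generators of $M(L)$: if $m_b \mid m_{b'}$, then $\{a \in \operatorname{mi}(L) : b \le a\} \supseteq \{a \in \operatorname{mi}(L) : b' \le a\}$, and the meet-irreducible decomposition (recalled below) yields $b \le b'$, forcing $b = b'$ since $b, b'$ are atoms.

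The main step is proving $\phi$ is an order isomorphism. Surjectivity is immediate, since every element of $\L_{M(L)}$ is by definition the lcm of a subset of the $m_b$, and that lcm equals $\phi\bigl(\bigvee_{b \in S} b\bigr)$ for the corresponding $S \subseteq \operatorname{at}(L)$. The key injectivity step uses the standard dual fact that in any finite lattice each element is the meet of the meet-irreducibles weakly above it, i.e., $c = \bigwedge\{a \in \operatorname{mi}(L) : c \le a\}$, with the empty meet interpreted as $\hat{1}$. Since $\phi(c)$ records precisely the complementary set $\{a \in \operatorname{mi}(L) : c \not\le a\}$, the equality $\phi(c) = \phi(c')$ forces $\{a : c \le a\} = \{a : c' \le a\}$, so $c = c'$.

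Both directions of the order relation are now immediate from the formula: $c \le c'$ in $L$ gives $\{a : c \not\le a\} \subseteq \{a : c' \not\le a\}$ and hence $\phi(c) \mid \phi(c')$, while conversely $\phi(c) \mid \phi(c')$ yields the same containment, which by the meet decomposition forces $c \le c'$. Thus $\phi$ is an isomorphism of posets, and therefore of lattices. The principal obstacle is really the injectivity step, which rests on the nontrivial combinatorial fact that finite lattices admit a meet-irreducible decomposition; everything else is essentially bookkeeping with the defining formula for $\phi$.
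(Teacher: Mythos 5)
This theorem is quoted in the paper from \cite[Theorem 5.1]{Phan06} and \cite[Theorem 58.6]{Peeva11} without an internal proof, so there is no in-paper argument to compare against; your proof is correct and is essentially the standard one from those sources, built on the map $c \mapsto \prod_{c \not\le a} x_a$ together with the two canonical decompositions (each element as the join of the atoms below it, and as the meet of the meet-irreducibles above it). The only step you pass over a bit quickly is surjectivity: one should check that $\lcm\{m_b : b \in S\} = \lcm\{m_{b'} : b' \in \operatorname{at}(L),\, b' \le \bigvee S\}$, which follows because if an atom $b' \le \bigvee S$ satisfies $b' \not\le a$, then some $b \in S$ must also satisfy $b \not\le a$ (otherwise $a$ would be an upper bound for $S$ and hence for $b'$); this is a one-line verification and does not affect the correctness of your argument.
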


   \noindent We call the resulting monomial ideal $M(L)$ the \term{Phan ideal} of $L$, and we call a squarefree monomial ideal $I$ \term{minimal} if there exists some finite atomic lattice $L$ for which $I = M(L)$.  For other ways to associate monomial ideals to a given lattice, see \cite{Mapes13}.

The primary interest in LCM lattices is that they can be used to compute the multigraded Betti numbers of a monomial ideal.  To explain the result, we first recall the definition of a simplicial complex.  An \term{abstract simplicial complex} $\Delta$ on a finite set $X$ is a nonempty collection of subsets of $X$ closed under taking subsets.  An element $F \in \Delta$ is called a \term{face} of $\Delta$, and its dimension is $\dim(F) = |F| -1$.  Note that there is exactly one (-1)-dimensional face of $\Delta$ corresponding to the empty set. The case of interest for our purposes is that of the \term{order complex} $O(P)$ of a poset $P$, which is the simplicial complex on the same underlying set consisting of all chains of $P$.  

Given an abstract simplicial complex $\Delta$ and commutative ring $R$, we define the \term{simplicial chain complex} $C_\bullet(\Delta;R)$ by setting $C_i(\Delta;R)$ to the the free $R$-module with basis the set of $i$-dimensional faces of $\Delta$, and with differential determined by the formula $\partial_i(\{j_0,\ldots,j_i\}) = \sum_{k = 0}^i (-1)^i \{j_0,\ldots,\hat{j_k},\ldots,j_i\}$.  The $i$th \term{reduced simplicial homology} of $\Delta$ with coefficients in $R$ is then $\tilde{H}_i(C_\bullet(\Delta;R))$.  

The significance of the LCM lattice of a monomial ideal $I \subseteq S$ is captured by the following result, which shows that the multigraded Betti numbers of $S/I$ can be computed as the reduced simplicial homology of order complexes of intervals in the LCM lattice of $I$.

\begin{thm}[Gasharov-Peeva-Welker \cite{GPW99},  see {\cite[Theorem 58.8]{Peeva11}}]\label{LatticeHomology}
    Suppose $I \subseteq S$ is a monomial ideal. Then,
    \begin{align*}
        \beta_{i,m}^S(S/M) = \begin{cases}
            \operatorname{dim}_\kk \tilde{H}_{i-2}(O((1,m)_{\L_I});\kk) & \text{if $1 \neq m \in \L_I$}\\
            0 & \text{if $1 \neq m \notin \L_I$},
        \end{cases}
    \end{align*}
    where $(1,m)_{\L_I} := \{y \in \L_I : 1 <y < m\}$. 
\end{thm}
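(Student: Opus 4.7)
The plan is to compute the multigraded Tor group via a specific (non-minimal) free resolution and then reinterpret the resulting chain complex as an order complex. Writing $I = (m_1,\ldots,m_q)$, I would use the Taylor resolution $T_\bullet$ of $S/I$, where $T_i$ has basis $\{e_\sigma : \sigma \subseteq \{1,\ldots,q\}, \, |\sigma| = i\}$ with $e_\sigma$ placed in multidegree $\lcm_{j \in \sigma} m_j$. Since $\Tor$ can be computed from any free resolution, $\beta_{i,m}^S(S/I) = \dim_\kk H_i(T_\bullet \otimes_S \kk)_m$. Tensoring with $\kk$ annihilates all positive-degree monomial entries, so only boundary terms of multidegree exactly $m$ survive in the $m$-graded strand. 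An immediate consequence is the vanishing $\beta_{i,m}^S(S/I) = 0$ for $1 \neq m \notin \L_I$, since no subset of generators has $\lcm$ equal to such an $m$.

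Now suppose $m \in \L_I \setminus \{1\}$. A careful tracking of the Taylor differential shows that, up to a homological shift of $-2$ (accounting for working with $S/I$ rather than $I$ and for the convention $|\sigma| = \dim(\sigma) + 1$), the $m$-graded piece of $T_\bullet \otimes_S \kk$ is isomorphic to the augmented simplicial chain complex of
\[
\Delta_{\prec m} := \bigl\{\sigma \subseteq \{1,\ldots,q\} : \lcm_{j \in \sigma} m_j \text{ strictly divides } m\bigr\}.
\]
This identification yields $\beta_{i,m}^S(S/I) = \dim_\kk \tilde{H}_{i-2}(\Delta_{\prec m}; \kk)$ and reduces the theorem to the statement that $\Delta_{\prec m}$ and $O((1,m)_{\L_I})$ share reduced simplicial homology with $\kk$-coefficients.

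The main obstacle, and the conceptual heart of the argument, is to produce a homotopy equivalence between these two complexes; I would obtain it from Quillen's Fiber Lemma applied to the order-preserving map from the face poset of $\Delta_{\prec m}$ to $(1,m)_{\L_I}$ sending $\sigma \mapsto \lcm_{j \in \sigma} m_j$. For each $y \in (1,m)_{\L_I}$, the fiber over the principal order ideal below $y$ consists of all $\sigma$ with $\lcm_{j \in \sigma} m_j \mid y$, which is the full simplex on $\{j : m_j \mid y\}$ and therefore contractible. Dually, the fiber above a given $\sigma$ in the opposite direction is an order filter with minimum element $\lcm_{j \in \sigma} m_j$, again contractible. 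Quillen's lemma then furnishes a homotopy equivalence, so $\tilde{H}_{i-2}(\Delta_{\prec m}; \kk) \iso \tilde{H}_{i-2}(O((1,m)_{\L_I}); \kk)$, completing the proof. The Taylor-complex bookkeeping in the first two steps is mechanical; all of the geometric content is concentrated in this last homotopy-equivalence step.
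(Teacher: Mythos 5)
First, a point of reference: the paper does not prove this statement at all --- it is quoted as a known theorem of Gasharov--Peeva--Welker with a citation to \cite{Peeva11}, so there is no in-paper argument to compare against. Judged on its own, your architecture is the standard one: compute $\Tor$ from the Taylor resolution, identify the $m$-graded strand with simplicial data attached to the complex $\Delta_{\prec m}$ of subsets of generators whose lcm strictly divides $m$, and then transfer homology from $\Delta_{\prec m}$ to the order complex of the open interval. Your last step is correct and complete: $\Delta_{\prec m}$ is exactly the crosscut complex of the interval $[1,m]$ with respect to its atoms, and your Quillen fiber lemma argument (the fiber over $\{z \le y\}$ is the face poset of the full simplex on $\{j : m_j \mid y\}$, hence contractible) is a legitimate substitute for the crosscut theorem usually invoked here. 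The vanishing for $m \notin \L_I$ is also fine.

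The genuine gap is in the middle step. The $m$-graded strand of $\mathbf{T}_\bullet \otimes_S \kk$ is \emph{not} isomorphic, even up to shift, to the augmented chain complex of $\Delta_{\prec m}$: the strand has a basis vector $e_\sigma$ precisely for those $\sigma$ with $\lcm_{j\in\sigma} m_j$ equal to $m$ \emph{exactly} (the summand $S(-\lcm(\sigma))\otimes_S\kk$ is concentrated in degree $\lcm(\sigma)$), whereas the chain complex of $\Delta_{\prec m}$ is built from the $\sigma$ whose lcm strictly divides $m$. Already for $I=(x,y)$ and $m=xy$ the strand is a single copy of $\kk$ in homological degree $2$ while $\tilde{C}_\bullet(\Delta_{\prec m})$ has total dimension $3$; the two complexes merely have the same homology, and that is what needs proof. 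The correct identification is with the \emph{relative} chain complex of the pair $(\Delta_{\le m}, \Delta_{\prec m})$, where $\Delta_{\le m}=\{\sigma : \lcm_{j\in\sigma}m_j \mid m\}$, with homological degree given by cardinality rather than dimension. One then observes that $\Delta_{\le m}$ is the full simplex on $\{j : m_j \mid m\}$, hence contractible, so the long exact sequence of the pair gives $H_i\bigl((\mathbf{T}_\bullet\otimes\kk)_m\bigr) \cong \tilde{H}_{i-2}(\Delta_{\prec m};\kk)$ after the two degree shifts. This is the missing idea behind your phrase ``a careful tracking of the Taylor differential shows''; as written, the asserted isomorphism of complexes is false, and the shift of $-2$ is accounted for by the cardinality-versus-dimension convention together with the pair sequence, not by passing from $I$ to $S/I$. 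With that step repaired, the proof goes through.
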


\subsection{Graphs and Edge Ideals}

A finite simple \term{graph} $G$ is a pair $(V,E)$ consisting of a finite set $V$ of \term{vertices} and set $E$ of unordered pairs of distinct vertices.  Thus, there are no loops and no multiple edges.  We say that $G$ is \term{nontrivial} if $E \neq \varnothing$. Given a graph $G$, its \term{complement} $G^c$ is the graph with the same vertex set and with edge set determined by $\{i,j\} \in E(G^c)$ if and only if $\{i,j\} \notin E(G)$.  Given a graph $G = (V,E)$, a \term{subgraph} is a graph $H = (G',E')$ where $G' \subseteq G$ and $E' \subseteq E$.  The subgraph $H$ is called \term{induced} if every edge $e \in E(G)$ with $e \subseteq V(H)$ is an edge of $H$. For a graph $H$, we say that $G$ is $H$-free if there exists no induced subgraph of $G$ isomorphic to $H$.  The \term{complete graph} $K_n$ is defined as the graph on vertices $[n] = \{1,\ldots,n\}$ with all possible edges.  The \term{path graph} $P_n$ is the graph on $[n]$ with edge set $E(G) = \{\{1,2\},\{2,3\},\ldots,\{n-1,n\}\}$.  The \term{cycle graph} $C_n$ is given by $V = [n]$ and $E = \{\{1,2\},\{2,3\},\dotsc ,\{n-1,n\},\{n,1\}\}$. The \term{star graph} $\operatorname{St}_n$ is given by $V = [n]$ and $E = \{\{1,i\} : i \in [n] - \{1\}\}$.
Given two vertices $v,w \in V(G)$, we say that $v$ and $w$ are \term{adjacent} if $\{v,w\} \in E(G)$.  The \term{neighbors} of $v$ is the set $N(v) = \{y \in V(G) \mid \{v,y\} \in E(G)\}$. 

Let $G$ be a graph on vertex set $V = [n]$ and fix a field $\kk$.  The \term{edge ideal} of $G$ is the ideal
\[I(G) = \left( x_i x_j \mid \{i,j\} \in E(G)\right) \subseteq S = \kk[x_1,\ldots,x_n].\]
Resolutions of edge ideals have received a lot of attention, and we refer the reader to \cite{Herzog11} for an introduction.

\section{Monomial ideals with Boolean LCM Lattices}\label{sec:boolean}

In this section, we characterize those monomial ideals whose LCM lattices are Boolean.  We first recall The Taylor resolution of a monomial ideal.

\begin{defn}
Fix a polynomial ring $S$.
    Let $I \subset S$ be a monomial ideal, and $\{m_1,\dotsc,m_q\}$ its minimal monomial generating set. Let $\Lambda$ denote the exterior algebra over $\kk^q$ with standard basis elements denoted $e_1,\dotsc,e_q$. We define $\mathbf{T}_\bullet$ to be the $S$-module $S\otimes_\kk \Lambda$ with homological grading given by $\operatorname{hdeg}(e_{j_1}\wedge \dotsb \wedge e_{j_i}) = i$, and with differential $d : \mathbf{T}_\bullet \to \mathbf{T}_\bullet $ given by
    \begin{align*}
        &d(e_{j_1}\wedge \dotsb \wedge e_{j_i})\\
        &= \sum_{1 \leq p \leq i}(-1)^{p-1}\frac{\operatorname{lcm}(m_{j_1},\dotsc,m_{j_i})}{\operatorname{lcm}(m_{j_1},\dotsc,\widehat{m}_{j_p},\dotsc,m_{j_i})} \otimes e_{j_1}\wedge\dotsb\wedge\widehat{e}_{j_p}\wedge\dotsb\wedge e_{j_i},
    \end{align*}
    where $\widehat{e}_{j_p}$ and $\widehat{m}_{j_p}$ denotes omission of the respective element.
    For each $i \in \mathbb{N}$, let $\Lambda_i$ denote the subspace of $\Lambda$ spanned by elements of the form $e_{j_1}\wedge\dotsb \wedge e_{j_i}$ and write $\mathbf{T}_i := S\otimes_\kk \Lambda_i$.
    The \term{Taylor resolution} is given by
    \begin{align*}
        \mathbf{T}_\bullet: 0 \longrightarrow \mathbf{T}_{q}\overset{d_q}{\longrightarrow} \mathbf{T}_{q-1} \overset{d_{q-1}}{\longrightarrow} \dotsc \longrightarrow \mathbf{T}_1 \overset{d_{1}}{\longrightarrow} \mathbf{T}_0 .
    \end{align*}
\end{defn}

\begin{thm}[{\cite[Theorem 26.7]{Peeva11}}]
    The Taylor complex $\mathbf{T}_\bullet$ is a free resolution of $S/I$.
\end{thm}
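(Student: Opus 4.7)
The plan is to verify the two defining properties of a free resolution: that $\mathbf{T}_\bullet$ is a chain complex augmenting onto $S/I$, and that it has vanishing higher homology. The first is a direct algebraic check on basis elements; the second follows from a strand-by-strand analysis in the $\mathbb{Z}^n$-multigrading.

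For the first part, I would compute $d \circ d$ applied to an exterior monomial $e_{j_1} \wedge \cdots \wedge e_{j_i}$. After two applications of the Taylor differential, each doubly-contracted term $e_{j_1} \wedge \cdots \wedge \widehat{e}_{j_p} \wedge \cdots \wedge \widehat{e}_{j_q} \wedge \cdots \wedge e_{j_i}$ (for $p < q$) receives exactly two contributions, corresponding to removing $j_p$ first or $j_q$ first. The lcm quotients telescope in both orderings to the identical expression $\lcm(m_{j_1},\ldots,m_{j_i})/\lcm(m_{j_1},\ldots,\widehat{m}_{j_p},\ldots,\widehat{m}_{j_q},\ldots,m_{j_i})$, while the Koszul-type signs differ by an overall minus, so the two contributions cancel. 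For the augmentation, the map $\mathbf{T}_0 = S \onto S/I$ is the natural quotient, whose kernel is $(m_1,\ldots,m_q) = I = \Im(d_1)$, giving $H_0(\mathbf{T}_\bullet) \cong S/I$.

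For higher vanishing, I would localize to each multidegree $\mathbf{a} \in \mathbb{Z}^n_{\geq 0}$. Set $\sigma_{\mathbf{a}} := \{j : m_j \mid x^{\mathbf{a}}\}$. A basis element $\frac{x^{\mathbf{a}}}{\lcm(m_{j_1},\ldots,m_{j_i})} \otimes e_{j_1} \wedge \cdots \wedge e_{j_i}$ lies in the $\mathbf{a}$-strand precisely when $\{j_1,\ldots,j_i\} \subseteq \sigma_{\mathbf{a}}$, since divisibility of $x^{\mathbf{a}}$ by each generator $m_{j_p}$ is equivalent to divisibility by their lcm. Under the bijection $F = \{j_1 < \cdots < j_i\} \leftrightarrow \tau_F$ between subsets of $\sigma_\mathbf{a}$ and basis elements of $(\mathbf{T}_i)_\mathbf{a}$, the Taylor differential restricts (after the lcm prefactors cancel) to the standard simplicial boundary $\sum_p (-1)^{p-1}\tau_{F \setminus \{j_p\}}$. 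Thus the strand $(\mathbf{T}_\bullet)_\mathbf{a}$ is the augmented simplicial chain complex of the full simplex $\Delta_\mathbf{a}$ on vertex set $\sigma_\mathbf{a}$, with $(\mathbf{T}_i)_\mathbf{a}$ matching $C_{i-1}(\Delta_\mathbf{a};\kk)$ and $(\mathbf{T}_0)_\mathbf{a}$ matching the $(-1)$-dimensional piece $\kk \cdot \emptyset$.

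Finally, I would invoke the classical fact that a nonempty simplex has vanishing reduced homology. When $\sigma_\mathbf{a} \neq \emptyset$ (equivalently, $x^\mathbf{a} \in I$), the augmented strand is exact, matching $(S/I)_\mathbf{a} = 0$; when $\sigma_\mathbf{a} = \emptyset$, the strand reduces to a single copy of $\kk$ in homological degree $0$, matching $(S/I)_\mathbf{a} \cong \kk$. Since exactness of a $\mathbb{Z}^n$-graded complex can be verified multidegree by multidegree, this establishes that $\mathbf{T}_\bullet \to S/I \to 0$ is exact everywhere. The main obstacle is purely bookkeeping: aligning the Taylor signs $(-1)^{p-1}$ with the simplicial boundary convention and carefully tracking the telescoping of the lcm quotients in the $d^2 = 0$ verification; neither step requires ingenuity, but each demands care with indexing.
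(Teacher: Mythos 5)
Your argument is correct and complete in outline. The paper itself does not prove this statement; it is quoted from Peeva's book, so there is no internal proof to compare against. What you have written is the standard ``simplicial'' proof of acyclicity: $d^2=0$ by the telescoping of the lcm quotients together with the usual Koszul sign cancellation, and exactness checked one multidegree $\mathbf{a}$ at a time by identifying the strand $(\mathbf{T}_\bullet)_{\mathbf{a}}$ with the augmented chain complex of the full simplex on $\sigma_{\mathbf{a}}=\{j : m_j \mid x^{\mathbf{a}}\}$, which is acyclic whenever it is nonempty. All the key reductions are right: $\lcm(m_j : j\in F)\mid x^{\mathbf{a}}$ iff $F\subseteq\sigma_{\mathbf{a}}$, the lcm prefactors cancel so the strand differential is the simplicial boundary, and the two cases $\sigma_{\mathbf{a}}=\emptyset$ and $\sigma_{\mathbf{a}}\neq\emptyset$ match $(S/I)_{\mathbf{a}}$ exactly. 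For comparison, the proof in the cited source proceeds differently: by induction on the number of generators $q$, realizing the Taylor complex of $(m_1,\ldots,m_q)$ as the mapping cone of a comparison map built from the Taylor complex of $(m_1,\ldots,m_{q-1})$ and that of the quotient ideal $\bigl((m_1,\ldots,m_{q-1}):m_q\bigr)$, with acyclicity following from the long exact sequence of the cone. The mapping-cone route is more self-contained homological algebra; your strand argument is arguably more illuminating here, since it is exactly the mechanism that underlies Theorem~\ref{LatticeHomology} and the whole LCM-lattice point of view of the paper, and it generalizes immediately to the criterion for when an arbitrary simplicial complex supports a resolution. The only places demanding care are the ones you flag yourself: the sign bookkeeping in $d^2=0$ and the convention that the empty face sits in homological degree $0$ of the augmented strand.
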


While the Taylor resolution is a free resolution, it is often not minimal. The following theorem characterizes when the Taylor resolution of $S/I$ is minimal. The proof of \textit{(2)} $\Leftrightarrow$ \textit{(3)} appears in \cite{Kuei-Nuan_Jason_13}.

\begin{thm}\label{Boolean:LCM:lattice}
    Let $I \subseteq S$ be a monomial ideal with minimal monomial generators $m_1,\ldots,m_q$. The following are equivalent:
\begin{enumerate}
        \item $\L_I$ is a Boolean lattice
        \item For each $i$ there is a variable $x_{j_i}$ and a positive integer $n_i$ such that $x_{j_i}^{n_i} \mid m_i$ and $x_{j_i}^{n_i} \!\not| \,\, m_j$ for $j \neq i$. 
        \item The Taylor resolution $\mathbf{T}_\bullet$ is minimal.
        \item $\pd_S(S/I) = \mu(I)$,
    \end{enumerate}
\end{thm}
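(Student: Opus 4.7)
The plan is to cite $(2) \Leftrightarrow (3)$ from \cite{Kuei-Nuan_Jason_13} and prove $(1) \Leftrightarrow (2)$ and $(3) \Leftrightarrow (4)$ directly.

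For $(1) \Leftrightarrow (2)$, I work with the canonical surjective join-homomorphism $\phi\colon 2^{[q]} \to \L_I$ sending a subset $S$ to $\lcm\{m_i : i \in S\}$. Since $\L_I$ is atomic on $q$ atoms, it is Boolean if and only if $\phi$ is bijective, equivalently all $2^q$ subsets produce distinct lcms. For $(1) \Rightarrow (2)$: distinctness forces $\lcm(m_1,\dotsc,\widehat{m}_i,\dotsc,m_q) \neq \lcm(m_1,\dotsc,m_q)$ for every $i$, and the variable witnessing this inequality yields the $x_{j_i}^{n_i}$ required in (2). For $(2) \Rightarrow (1)$: assuming (2), if $\phi(S_1) \mid \phi(S_2)$ and $i \in S_1 \setminus S_2$, then $x_{j_i}^{n_i}$ divides $\phi(S_1)$ but not $\phi(S_2)$, a contradiction. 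Hence $\phi$ reflects the order, and, combined with being order-preserving and surjective, is a lattice isomorphism making $\L_I$ Boolean.

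Step $(3) \Rightarrow (4)$ is immediate since $\mathbf{T}_\bullet$ has length $q = \mu(I)$. For $(4) \Rightarrow (2)$, I use the standard fact that every free resolution of a finitely generated graded $S$-module decomposes as a direct sum of its minimal free resolution $F_\bullet$ and a trivial complex $G_\bullet$. Writing $\mathbf{T}_\bullet \cong F_\bullet \oplus G_\bullet$, condition (4) forces $F_q \neq 0$; since $\rank(\mathbf{T}_q) = 1$ we must have $F_q = \mathbf{T}_q$ and $G_q = 0$. The image of $d_q$ thus lies in $\m F_{q-1} \subseteq \m \mathbf{T}_{q-1}$, which (being basis-independent) forces each Taylor entry of $d_q$ to be a non-unit. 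By the formula for the Taylor differential, this reads $\lcm(m_1,\dotsc,m_q) \neq \lcm(m_1,\dotsc,\widehat{m}_p,\dotsc,m_q)$ for every $p$, and the variable realizing this strict inequality supplies the $x_{j_p}$ in (2).

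I anticipate the only nontrivial input is the direct-summand decomposition used in $(4) \Rightarrow (2)$; with that in hand, the conclusion about non-unit Taylor entries is immediate because $\m \mathbf{T}_{q-1}$ is a basis-independent submodule and $\mathbf{T}_q$ is cyclic, so no nontrivial rebasing occurs at position $q$.
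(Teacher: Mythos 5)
Your proof is correct, but it closes the cycle of implications by a genuinely different route than the paper. The paper also quotes $(2)\Leftrightarrow(3)$ from the same reference, but then proves $(2)\Rightarrow(1)$, $(1)\Rightarrow(4)$, and $(4)\Rightarrow(3)$; the step $(1)\Rightarrow(4)$ is the one place where real machinery enters, namely Folkman's theorem on the homology of geometric lattices combined with the Gasharov--Peeva--Welker formula (Theorem~\ref{LatticeHomology}), which shows $\tilde{H}_{q-2}$ of the open interval is nonzero and hence $\pd_S(S/I)=q$. You avoid this entirely: your combinatorial proof of $(1)\Rightarrow(2)$ via injectivity of the join-map $2^{[q]}\to\L_I$ (removing any generator must strictly drop the top lcm, and the witnessing variable gives the $x_{j_i}^{n_i}$) lets you reach $(4)$ through the trivial implication $(3)\Rightarrow(4)$ instead. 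Your $(4)\Rightarrow(2)$ is in substance the same observation as the paper's contrapositive $(4)\Rightarrow(3)$ --- a unit entry in $d_q$ is exactly a failure of the lcm to drop --- though you justify the ``no unit entries'' conclusion more carefully via the minimal-plus-trivial splitting of $\mathbf{T}_\bullet$ and the basis-independence of $\m\mathbf{T}_{q-1}$, and you correctly note that $\rank\mathbf{T}_q=1$ forces $G_q=0$. What each approach buys: yours is more elementary and self-contained (no simplicial homology, and no need to first polarize to the squarefree case, since you work directly with the powers $x_{j_i}^{n_i}$), while the paper's detour through Folkman's theorem establishes the stronger and reusable fact that geometric LCM lattices always have nonvanishing top homology, which the paper exploits again in Theorem~\ref{pdim:Geometric:Lattices}.
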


\begin{proof}
Since polarization does not affect the LCM lattice, we may assume that $I$ is squarefree.  In this case, the condition implies that each monomial generator is divisible by a unique variable.  Then \textit{(2)} $\Leftrightarrow$ \textit{(3)} follows from \cite[Proposition 4.1]{Kuei-Nuan_Jason_13}.    \\

\noindent \underline{\textit{(2)}  $\Rightarrow$ \textit{(1)}}: Again suppose that $I$ is squarefree and that each generator $m_i$ of $I$ is divisible by a unique variable, say $x_i$.  Then we may identify the element $\lcm(m_{i_1},\ldots,m_{i_t}) \in \L_I$ with the element $\{x_{i_1},\ldots,x_{i_t}\}$ in the Boolean lattice $2^{X}$, where $X = \{x_1,\ldots,x_n\}$.  It is easy to see that this is a lattice isomorphism.\\

\noindent \underline{\textit{(1)} $\Rightarrow$ \textit{(4)}}: By Theorem~\ref{LatticeHomology}, $\pd_S(S/I) = \max\{i \mid \tilde{H}_{i-2}(O((1,m)_{\L_I};\kk) \neq 0\}$.  By  \cite[Theorem 4.1]{Folkman66}, since $\L_I$ is in particular geometric, $\tilde{H}_{q-2}(O((1,m)_{\L_I};\kk) \neq 0$, where $q = \mu(I)$ is the length of the Taylor resolution.  Thus, $\pd_S(S/I) = q$.\\

\noindent \underline{\textit{(4)} $\Rightarrow$ \textit{(3)}}: We prove the contrapositive.  Write $I = (m_1,\ldots,m_q)$.  Suppose that $\mathbf{T}_\bullet$ is not minimal.  Then some entry in one of the differentials is a unit, say $\frac{\operatorname{lcm}(m_{j_1},\dotsc,m_{j_i})}{\operatorname{lcm}(m_{j_1},\dotsc,\widehat{m}_{j_p},\dotsc,m_{j_i})} = 1$.  But then $\frac{\operatorname{lcm}(m_{1},\dotsc,m_{q})}{\operatorname{lcm}(m_{1},\dotsc,\widehat{m}_{j_p},\dotsc,m_{q})} = 1$ as well, meaning the differential $d_q$ is not minimal.  It follows that the minimal free resolution of $S/I$, which is a subcomplex of $\mathbf{T}_\bullet$, has length $\pd_S(S/I) < \mu(I) = q$.
\end{proof}

\section{Cohen-Macaulayness of Minimal Monomial Ideals Associated to Modular Lattices}\label{sec:modular}

The purpose of this section is to prove that the Phan ideal of a modular lattice is always Cohen-Macaulay.  Recall that a graded module over a standard graded polynomial ring $S$ is called \term{Cohen-Macaulay} if $\dim(M) = \mathrm{depth}(M)$; here $\dim(M)$ denotes the Krull dimension of $M$, and $\mathrm{depth}(M)$ is the depth of $M$ in the graded maximal ideal $(x_1,\ldots,x_n)$ of $S$.  By the Auslander-Buchsbaum Formula \cite[Theorem 15.3]{Peeva11}, an ideal defines a Cohen-Macaulay cyclic module $S/I$ if and only if $\pd_S(S/I) = \hgt(I)$.

\begin{thm}\label{Modular}
    Suppose $I$ is a minimal monomial ideal with a modular LCM lattice. Then $S/I$ is Cohen-Macaulay. 
\end{thm}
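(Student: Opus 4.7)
The plan is to establish Cohen-Macaulayness by verifying $\pd_S(S/I) = \hgt(I)$, which by Auslander-Buchsbaum is equivalent to $S/I$ being Cohen-Macaulay.  Write $I = M(L)$ for the Phan ideal of a finite atomic modular lattice $L$, and set $r = \operatorname{ht}(L) = \rk(\hat{1})$.  Since LCM lattices are atomic and modular implies upper semimodular, $L$ is geometric, and every closed interval $[\hat{0},m]$ inherits this property.

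First I would compute $\pd_S(S/I) = r$ via Theorem~\ref{LatticeHomology}.  For each $m \in L \setminus \{\hat{0}\}$, the open interval $(\hat{0}, m)_L$ is the proper part of a rank-$\rk(m)$ geometric lattice, so Folkman's theorem \cite[Theorem 4.1]{Folkman66} guarantees that the top reduced homology of its order complex in dimension $\rk(m) - 2$ is nonzero, while higher reduced homology vanishes for dimension reasons.  Therefore $\beta_{\rk(m), m}^S(S/I) \neq 0$ and $\beta_{i, m}^S(S/I) = 0$ for $i > \rk(m)$, giving $\pd_S(S/I) = \max_{m \in L} \rk(m) = r$.

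Next I would show $\hgt(I) \geq r$; combined with the standard bound $\hgt(I) \leq \pd_S(S/I) = r$, this yields the needed equality.  A prime $(x_a : a \in A)$ with $A \subseteq \operatorname{mi}(L)$ contains $I$ iff every atom $b \in L$ admits some $a \in A$ with $b \not\leq a$, which by atomicity of $L$ is equivalent to $\bigwedge A = \hat{0}$.  Thus $\hgt(I) = \min\{|A| : A \subseteq \operatorname{mi}(L),\ \bigwedge A = \hat{0}\}$.  The key lemma is that in a finite atomic modular lattice \emph{every meet-irreducible has corank exactly $1$}.  Granting this, the iterated modular inequality $\rk(\bigwedge A) \geq \sum_{a \in A} \rk(a) - (|A|-1)r$ applied to any $A$ with $\bigwedge A = \hat{0}$ and each $\rk(a) = r-1$ yields $0 \geq |A|(r-1) - (|A|-1)r = r - |A|$, i.e., $|A| \geq r$.

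The main obstacle is the corank-$1$ lemma, which genuinely requires atomicity (chains furnish easy counterexamples without it).  For a proof, suppose $a \in \operatorname{mi}(L)$ has unique cover $a^+$ with $a^+ \neq \hat{1}$, and choose any cover $b$ of $a^+$.  Since $L$ is atomic, some atom $p \leq b$ satisfies $p \not\leq a^+$.  Then $a < p \vee a \leq b$, and the unique-cover property forces $a^+ \leq p \vee a$, hence $p \vee a = b$.  Further $p \wedge a = \hat{0}$, because $p \not\leq a^+ \geq a$ and $p$ is an atom.  Modularity then gives $\rk(b) = \rk(p) + \rk(a) - \rk(\hat{0}) = \rk(a) + 1 = \rk(a^+)$, contradicting $a^+ \lessdot b$.
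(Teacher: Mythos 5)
Your proof is correct, but it takes a genuinely different route from the paper's. The paper invokes Birkhoff's structure theorem to write a finite atomic modular lattice as a product of subspace lattices of finite vector spaces, incidence lattices of projective planes, and lattices of height at most $2$; it then reduces to each factor via the tensor-product decomposition of the Phan ideal (citing \cite{BK02}) and verifies Cohen--Macaulayness factor by factor using incidence geometry (any $r-1$ hyperplanes meet in a point, any two lines of a projective plane meet, etc.). You instead give a uniform argument: the equality $\pd_S(S/I)=\hgt(L)$ is exactly the paper's Theorem~\ref{pdim:Geometric:Lattices} (Folkman plus Rota, since atomic modular lattices are geometric), and the inequality $\hgt(I)\ge \hgt(L)$ comes from two facts --- meet-irreducibles of an atomic modular lattice are coatoms (this is the coatomicity of geometric lattices that the paper cites from \cite[Proposition~1.7.8]{Oxley92}, and your direct proof of it is correct), and the iterated rank inequality $\rk(\bigwedge A)\ge \sum_{a\in A}\rk(a)-(|A|-1)\rk(\hat 1)$, whose two-element case is the \emph{lower} semimodular direction and hence genuinely requires modularity rather than mere geometricity. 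Your approach buys a classification-free, self-contained proof that cleanly isolates where modularity is used; reassuringly, the paper's own example of a geometric, supersolvable, complemented (but non-modular) lattice of flats with $\hgt(I)=2<3=\pd_S(S/I)$ is precisely a failure of your rank inequality, so the hypothesis is used where it must be. What the paper's route buys in exchange is finer structural information from the case analysis --- explicit descriptions of the minimal primes and, e.g., the purity of the Betti table for subspace lattices in the subsequent corollary.
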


\noindent We first recall the Birkhoff's classification of finite atomic, modular lattices.

\begin{thm}[{\cite[p. 120]{Birkhoff48}}]\label{Birkhoff}
    Suppose $L$ is a finite, atomic, modular lattice. Then, $L \cong L_1\times \dotsb \times L_t$, where each $L_i$ is isomorphic to one of the following lattices:
\begin{enumerate}
        \item The lattice of subspaces of a finite vector space.
        \item The incidence lattice of a finite projective plane.
        \item A finite lattice of height $1$ or $2$.
    \end{enumerate}
\end{thm}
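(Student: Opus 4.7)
The plan is to prove the Birkhoff classification in three phases: (i) reduce to directly indecomposable lattices via Ore's unique-decomposition theorem, (ii) handle the low-rank case by direct inspection, and (iii) coordinatize the high-rank indecomposable case via the Veblen-Young theorem on projective geometries.

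First, I would invoke Ore's theorem for finite modular lattices: any such $L$ admits a (unique up to reordering) decomposition $L \cong L_1 \times \cdots \times L_t$ into directly indecomposable factors. Both atomicity and modularity pass to direct factors, because an element $(x_1, \ldots, x_t)$ is a join of atoms iff each $x_i$ is a join of atoms of $L_i$, and the rank function on $L$ is the componentwise sum of the rank functions on the $L_i$. It therefore suffices to prove that any finite, atomic, modular, directly indecomposable lattice $L$ of rank $r := \rk(L)$ falls into type $(1)$, $(2)$, or $(3)$.

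If $r \leq 2$, then $L$ is of type $(3)$ by definition. Suppose $r \geq 3$. I would next verify that $L$ carries the structure of an abstract projective geometry in the sense of Veblen-Young: for any two distinct atoms $a, b$, modularity gives $\rk(a \vee b) = 1 + 1 - 0 = 2$, so joins of pairs of points behave like lines; applying the modular identity to triangles of atoms yields Veblen's axiom; and the exchange property for atoms is automatic in any geometric lattice. For $r = 3$, the atoms and rank-$2$ elements of $L$ thus form the points and lines of an (abstract) projective plane, placing $L$ in type $(2)$. For $r \geq 4$, I would invoke the Veblen-Young coordinatization theorem: every projective geometry of dimension $\geq 3$ arises as $\PP(V)$ for a vector space $V$ over a division ring $D$. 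Finiteness of $L$ forces $D$ to be finite, and Wedderburn's little theorem then promotes $D$ to a finite field, placing $L$ in type $(1)$.

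The main obstacle is the Veblen-Young coordinatization theorem, the deep classical result identifying abstract projective geometries of dimension $\geq 3$ with subspace lattices of vector spaces; its proof intrinsically reconstructs the scalar division ring from lattice-theoretic data via harmonic conjugates and cross-ratios, and this is the essential reason why type $(1)$ appears at all. A separate subtlety is that coordinatization genuinely fails in dimension $2$, which is why non-Desarguesian projective planes must be tolerated in type $(2)$. The remaining ingredients, namely Ore's direct-decomposition theorem, the rank-$\leq 2$ inspection, and the derivation of Veblen's axiom from the modular identity, are routine lattice-theoretic manipulations and would be discharged with short arguments.
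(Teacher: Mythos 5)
The paper offers no proof of this statement at all: it is quoted directly from Birkhoff's book with a citation to p.~120, so there is nothing internal to compare your argument against. Judged on its own terms, your outline is the standard classical route and is essentially correct: decompose into directly indecomposable factors (Ore, or just induction on size since uniqueness is not needed), observe that atomicity and modularity pass to factors, dispose of height $\le 2$ by inspection, identify the indecomposable factors of rank $\ge 3$ with projective geometries, and then apply Veblen--Young coordinatization plus Wedderburn's little theorem to land in the subspace lattice of a finite vector space when the rank is at least $4$, leaving genuine (possibly non-Desarguesian) projective planes only in rank $3$. This matches the content and spirit of the cited source.

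One step you wave at too quickly is the passage from ``directly indecomposable of rank $\ge 3$'' to ``nondegenerate projective geometry.'' The Veblen--Young axioms require every line to contain at least three points, and this is precisely where indecomposability must be used: if some rank-$2$ element $a \vee b$ lies above only the two atoms $a,b$, one must show the lattice splits as a nontrivial direct product, and that argument (typically via the perspectivity/connectivity relation on atoms, showing the equivalence classes of atoms generate complementary direct factors) is the real content of the reduction, not a formal consequence of having applied Ore's theorem first. Relatedly, ``the exchange property is automatic in any geometric lattice'' presupposes you have already noted that a finite atomic modular lattice is upper semimodular and hence geometric; that is true but should be said. With those two points made explicit, the outline is a faithful reconstruction of Birkhoff's theorem.
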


\noindent Note that the projective planes in the previous theorem include non-Desarguesian planes, such as the Hughes plane.  Here is the formal definition.

\begin{defn}
    A \term{projective plane} is an ordered triple $\mathcal{H} = (P,L,I)$.  The elements of $P$ are called \term{points}; the elements of $L$ are called \term{lines}, and $I$ is a symmetric relation on $P \cup L$ recording the the incidence relation satisfying the following properties:
\begin{enumerate}
        \item Given any two points $p_1,p_2 \in P$, there exists a unique line $\ell \in L$ with $p_1,p_2 \in \ell$.
        \item Given any two lines $\ell_1,\ell_2 \in L$, there exists a unique point $p \in P$ for which $p \in \ell_1\cap \ell_2$.
        \item There exists four points $p_1,p_2,p_3,p_4 \in P$ of which no line contains more than $2$.
    \end{enumerate}
\end{defn}  

\noindent While the classification of projective planes is an open question, there are restrictions on their possible sizes; see e.g. \cite[p. 111]{Birkhoff48}.


To prove that minimal monomial ideals associated to modular lattices are Cohen-Macaulay, we must show that those associated to projective planes and finite vector spaces are Cohen-Macaulay.  We first observe that the height of an LCM lattice bounds the projective dimension of the associated monomial ideal.

\begin{lemma}\label{pdHeight}
    Suppose $I$ is a monomial ideal. Then $\pd_S(S/I) \leq \operatorname{ht}(\L_I))$.
\end{lemma}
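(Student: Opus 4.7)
The plan is to apply Theorem~\ref{LatticeHomology} directly and then use a dimension bound on the order complex. Recall that $\pd_S(S/I) = \max\{i \mid \beta_{i,j}^S(S/I) \neq 0 \text{ for some } j\}$, and that the multigraded Betti numbers of $S/I$ can be recovered from the multigraded decomposition of the standard graded Betti numbers. By Theorem~\ref{LatticeHomology}, $\beta_{i,m}^S(S/I) \neq 0$ requires $m \in \L_I$ with $m \neq 1$ and $\tilde{H}_{i-2}(O((1,m)_{\L_I}); \kk) \neq 0$. Let $i_0 := \pd_S(S/I)$ and pick $m \in \L_I$ realizing this: $\tilde{H}_{i_0 - 2}(O((1,m)_{\L_I}); \kk) \neq 0$.

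Next I would invoke the elementary fact that a simplicial complex $\Delta$ has $\tilde{H}_j(\Delta;\kk) = 0$ for all $j > \dim \Delta$. Since the faces of the order complex $O((1,m)_{\L_I})$ are precisely the chains of the open interval $(1,m)_{\L_I}$, its dimension equals $c - 1$, where $c$ is the number of elements in a longest chain inside $(1,m)_{\L_I}$ (with the convention that the empty chain gives dimension $-1$ when $m$ is an atom). Non-vanishing of $\tilde{H}_{i_0 - 2}$ then forces $i_0 - 2 \leq c - 1$, i.e., $c \geq i_0 - 1$.

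Finally, any chain of $c$ elements in $(1,m)_{\L_I}$ extends, by prepending $1$ and appending $m$, to a chain of $c + 2$ elements in $\L_I$, which has length $c + 1$ in the sense of the paper's definition. Therefore $\operatorname{ht}(\L_I) \geq c + 1 \geq i_0 = \pd_S(S/I)$, completing the proof. The whole argument is essentially a bookkeeping step on top of the Gasharov--Peeva--Welker formula, so I do not anticipate any real obstacle; the only care needed is to handle the edge case $m \in \operatorname{at}(\L_I)$ (where $(1,m)_{\L_I} = \emptyset$) consistently with the paper's convention that $\tilde{H}_{-1}$ of the empty complex supplies the $i=1$ Betti numbers, matched against the chain $1 < m$ of length $1$ in $\L_I$.
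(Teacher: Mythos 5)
Your argument is correct and follows essentially the same route as the paper's own proof: both rest on Theorem~\ref{LatticeHomology} together with the observation that the reduced homology of the order complex $O((1,m)_{\L_I})$ vanishes above its dimension, which is in turn controlled by chain lengths in $\L_I$ and hence by $\hgt(\L_I)$. The paper phrases this as a vanishing statement for all $i > \hgt(\L_I)$ while you pick a witness $m$ at $i_0 = \pd_S(S/I)$ and extend a maximal chain of $(1,m)_{\L_I}$ by the endpoints, but the substance and the bookkeeping are the same.
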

\begin{proof}
    Let $r := \operatorname{ht}(\L_I)$, and
    suppose $i > r$.
    Then by Theorem~\ref{LatticeHomology} we have the following:
    \begin{align*}
        \beta_{i}^{S}(S/M) 
        &=\sum_{1 \neq m \in \L_I}\beta_{i,m}^{S}(S/M)\\
        &=\sum_{1 \neq m \in \L_I}\dim \tilde{H}_{i-2}(O((1,m)_{L(M)});\kk).
    \end{align*}

    \noindent Recall that $\tilde{H}_{i-2}(O((1,m)_{\L_I});\kk)$ is a quotient of the vector space generated by $i-2$ simplices of $O((1,m)_{\L_I})$. However, the largest maximal chain in $L$ has size equal to $\operatorname{rk}(L) = r+1$. Hence, the cardinality of simplices in $O((1,m)_{\L_I})$ is bounded above by $r-1$. In particular, the dimension of simplices in $O((1,m)_{\L_I})$ is bounded by $r-2 < i-2$. As such, $ \tilde{H}_{i-2}(O((1,m)_{\L_I});k) = 0$ for all $m \in \L_I$. Hence, $\beta_{i}^{S}(S/M) = 0$ for all $i > \hgt(\L_I)$, and thus, $\pd(S/M) \le \hgt(\L_M)$.
\end{proof}

\noindent The previous result was also proved by Phan in \cite[Theorem 6.4]{Phan06}.  Phan also proved that $\pd_S(S/I)$ is at most the width (i.e. maximal size of an antichain) of the subposet of meet-irreducibles of $\L_I$.  In the next section we explore when equality occurs in the previous lemma.

\begin{lemma}\label{subspacesModular}
    Suppose $L$ is the lattice of subspaces of a finite vector space $V = \mathbb{F}_q^r$, and let $I \subseteq S = \kk[x_1,\ldots,x_n]$ be the associated minimal monomial ideal. Then $S/I$ is Cohen–Macaulay. 
\end{lemma}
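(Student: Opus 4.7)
The strategy is to sandwich $\pd_S(S/I)$ between two quantities both equal to $r = \dim V$ and then invoke the Auslander--Buchsbaum formula.  Since $I$ is automatically squarefree (being a Phan ideal), Cohen--Macaulayness of $S/I$ is equivalent to $\pd_S(S/I) = \hgt(I)$, so it suffices to show both equal $r$.

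First, I would identify the Phan ideal explicitly.  In the subspace lattice $L$, a subspace $W$ of codimension $c$ is covered by exactly $(q^c-1)/(q-1)$ superspaces, so $W$ is meet-irreducible iff $c = 1$, i.e., $W$ is a hyperplane.  The atoms of $L$ are the lines (one-dimensional subspaces).  By Theorem~\ref{Phan}, the generators of $I = M(L)$ are indexed by lines $\ell$ and take the form
\[
m_\ell = \prod_{\substack{H \text{ hyperplane} \\ \ell \not\subseteq H}} x_H
\]
inside the polynomial ring $S = \kk[x_H \mid H \text{ a hyperplane of } V]$.

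Next, the maximal flag $0 \subsetneq V_1 \subsetneq \cdots \subsetneq V_{r-1} \subsetneq V$ shows $\hgt(L) = r$, so Lemma~\ref{pdHeight} yields the upper bound $\pd_S(S/I) \leq r$.  For the matching lower bound, I would compute $\hgt(I)$ combinatorially.  Since $I$ is squarefree, its minimal primes take the form $P_T = (x_H \mid H \in T)$, where $T$ is a minimal set of hyperplanes such that each $m_\ell$ has a factor in $P_T$, i.e., for every line $\ell$ there is some $H \in T$ with $\ell \not\subseteq H$.  Equivalently, $\bigcap_{H \in T} H$ contains no line, so $\bigcap_{H \in T} H = 0$.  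Because $k$ hyperplanes intersect in a subspace of codimension at most $k$, any such $T$ satisfies $|T| \geq r$; conversely, $r$ coordinate hyperplanes achieve this, so $\hgt(I) = r$.

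Combining with the universal inequality $\pd_S(S/I) \geq \hgt(I)$ (a direct consequence of Auslander--Buchsbaum plus $\mathrm{depth}(S/I) \leq \dim(S/I)$), we conclude $\pd_S(S/I) = r = \hgt(I)$, whence $S/I$ is Cohen--Macaulay.  The only nontrivial part of the argument is the identification of meet-irreducibles as hyperplanes and the translation of containment $I \subseteq P_T$ into the linear-algebraic condition $\bigcap_{H \in T} H = 0$; everything else is formal.  I expect no real obstacle, since the combinatorics of hyperplane intersections in a vector space is very rigid.
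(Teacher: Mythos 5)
Your argument is correct and is essentially the paper's own proof: both identify the variables with the hyperplanes and the generators with the atoms (one-dimensional subspaces) via Phan's construction, use Lemma~\ref{pdHeight} for the upper bound $\pd_S(S/I) \le r$, and obtain the matching lower bound $\hgt(I) \ge r$ from the fact that any $r-1$ hyperplanes meet in a nonzero subspace. The only cosmetic difference is that you identify the meet-irreducibles as hyperplanes by counting covers, whereas the paper cites coatomicity of geometric lattices.
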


\begin{proof} By Lemma~\ref{pdHeight}, it suffices to show that for all minimal primes $\p$ of $I$ we have $\hgt(\p) = \hgt(L) = r$.  Since $L$ is modular, it is geometric and hence coatomic by \cite[Proposition 1.7.8]{Oxley92}; therefore all meet-irreducible elements are coatoms, which correspond to hyperplanes in $V$.  It follows from Theorem~\ref{Phan} that the variables $x_1,\ldots,x_n$ correspond to the hyperplanes of $V$ and a minimal monomial generator of $I$ corresponds to an atom of $L$, which in turn corresponds to a point $p$ in $V$ and is labeled by products of variables corresponding to hyperplanes not containing $p$.  Thus, a minimal prime of $I$ is generated by a minimal set of variables corresponding to a set $\mathcal{H}$ of hyperplanes with the property that for any point $p$ there is a hyperplane $H \in \mathcal{H}$ with $p \notin H$.  Since the intersection of any $r-1$ hyperplanes contains at least one point, any minimal prime of $I$ has height at least $r$.  It follows that $\hgt(I) = \pd(S/I) = r$ and $S/I$ is Cohen-Macaulay.
\end{proof}

\noindent The proof in the case of projective planes is similar.

\begin{lemma}\label{projectivePlaneModular}
    Suppose $L$ is the incidence lattice of a finite projective plane. Then $S(L)/M(L)$ is Cohen–Macaulay.
\end{lemma}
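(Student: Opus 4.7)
The plan is to mirror the proof of Lemma~\ref{subspacesModular} using the defining axioms of a projective plane. First, observe that the incidence lattice $L$ of a finite projective plane has exactly four ranks ($\hat{0}$, points, lines, $\hat{1}$), so $\hgt(L) = 3$, and Lemma~\ref{pdHeight} immediately gives $\pd_S(S(L)/M(L)) \le 3$. Since $L$ is modular, it is geometric and hence coatomic, so its meet-irreducible elements are precisely the coatoms, i.e., the lines of the plane. By Theorem~\ref{Phan}, the variables of $S(L)$ are therefore indexed by lines, and the minimal generator of $M(L)$ attached to a point $p$ is $m_p = \prod_{\ell \not\ni p} x_\ell$.

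The core step is to compute $\hgt(M(L))$. Because $M(L)$ is squarefree, every minimal prime has the form $\p_\mathcal{H} = (x_\ell : \ell \in \mathcal{H})$ for some inclusion-minimal family $\mathcal{H}$ of lines with the property that, for each point $p$, some $\ell \in \mathcal{H}$ fails to pass through $p$; equivalently, $\bigcap_{\ell \in \mathcal{H}} \ell = \varnothing$. I would then invoke the projective plane axioms: axiom (2) forces any two distinct lines to meet in a unique point, so no pair of lines has empty intersection, giving $|\mathcal{H}| \ge 3$. Conversely, axiom (3) furnishes three non-collinear points $p_1, p_2, p_3$, and the three lines joining their pairs cannot be concurrent, since by axiom (2) any common point of two of these lines must equal one of the chosen $p_i$, which would then be forced to lie on the remaining line and make $p_1,p_2,p_3$ collinear. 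This triangle of lines gives a cover of size $3$, so $\hgt(M(L)) = 3$.

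Combining these two bounds with the standing inequality $\pd_S(S(L)/M(L)) \ge \hgt(M(L))$ (valid for any ideal) pins both quantities at $3$, so $\pd_S(S(L)/M(L)) = \hgt(M(L))$ and $S(L)/M(L)$ is Cohen-Macaulay. The only mild obstacle is setting up the dictionary between the three projective plane axioms and the concurrence statements about $\mathcal{H}$; once that translation is in place, the argument proceeds in exact parallel with the linear subspace case, and no finer information about the specific projective plane (Desarguesian or otherwise) is required.
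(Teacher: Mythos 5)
Your proof is correct and follows essentially the same route as the paper: both arguments reduce to showing $\hgt(M(L)) = 3 = \hgt(L)$ by observing that minimal primes correspond to sets of lines with empty common intersection, that any two lines meet in a point (so at least three lines are needed), and then invoking Lemma~\ref{pdHeight} together with $\pd_S(S(L)/M(L)) \ge \hgt(M(L))$ to conclude Cohen--Macaulayness. You simply spell out more of the dictionary (meet-irreducibles are the lines, the triangle from axiom (3) realizing a height-$3$ prime) than the paper's terser version does.
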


\begin{proof} Once again, since $\hgt(L) = 3$, we must prove that all minimal primes are generated by $3$ variables.  Now the atoms are the points and the coatoms are the lines.  Since any two lines meet in a point, any minimal prime must be generated by at least (hence exactly) 3 varables.  Thus, $\hgt(I) = \pd(S/I) = 3$ and $S/I$ is Cohen-Macaulay.
\end{proof}

\begin{lemma}\label{height2lattices}
    Suppose $L$ is a graded (i.e. modular) lattice of height $2$, and let $I \subseteq S = \kk[x_1,\ldots,x_n]$ be the associated minimal monomial ideal.  Then $S/I$ is Cohen-Macaulay.
\end{lemma}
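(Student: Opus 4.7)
The plan is to compute the Phan ideal $M(L)$ explicitly, check that its height equals the height of $L$, and then conclude via Lemma~\ref{pdHeight} together with the Auslander--Buchsbaum formula.

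First I would describe $L$. Atomicity forces $\hat 1$ to be a join of atoms, so height $2$ requires at least two atoms $a_1,\ldots,a_k$; every element of $L$ is then $\hat 0$, one of the $a_i$, or $\hat 1$, and every rank-$1$ element is simultaneously an atom and a coatom. Since distinct atoms are incomparable, each $a_i$ is meet-irreducible; meanwhile $\hat 1$ is never meet-irreducible, and $\hat 0 = a_i \wedge a_j$ for any $i \neq j$ fails to be meet-irreducible. Hence $\operatorname{mi}(L) = \operatorname{at}(L) = \{a_1,\ldots,a_k\}$.

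Next I would apply Phan's algorithm (Theorem~\ref{Phan}). Labeling $a_i$ by a variable $x_i$, the atom $a_i$ lies below the meet-irreducible $a_j$ only when $i = j$, so its generator is $\prod_{j \neq i} x_j$. Therefore
\begin{align*}
M(L) = \left(\tfrac{x_1 x_2 \cdots x_k}{x_i} \;\middle|\; 1 \le i \le k\right) \subset S(L) = \kk[x_1,\ldots,x_k].
\end{align*}
A prime $(x_j : j \in T)$ contains $M(L)$ precisely when $T$ meets each $[k] \setminus \{i\}$, which happens iff $|T| \ge 2$. Thus the minimal primes of $M(L)$ are the $\binom{k}{2}$ ideals $(x_i, x_j)$, and $\hgt(M(L)) = 2$.

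Finally, Lemma~\ref{pdHeight} gives $\pd_{S(L)}(S(L)/M(L)) \le \hgt(L) = 2$, while the Cohen--Macaulayness of the polynomial ring $S(L)$ provides the reverse inequality $\hgt(M(L)) \le \pd_{S(L)}(S(L)/M(L))$. The two match, and the Auslander--Buchsbaum formula then forces $S(L)/M(L)$ to be Cohen--Macaulay. The only step that requires any care is the identification $\operatorname{mi}(L) = \operatorname{at}(L)$, which follows from the fact that in a height-$2$ atomic lattice every non-extremal element has rank $1$; the remaining computations are a direct unpacking of the Phan construction and a minimal-prime count, so there is no real obstacle.
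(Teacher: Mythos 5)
Your proposal is correct and follows essentially the same route as the paper: identify that in a height-$2$ graded atomic lattice the atoms, coatoms, and meet-irreducibles coincide, read off from Phan's algorithm that the generators are the products of all but one variable, observe that the minimal primes are exactly the ideals generated by two variables, and conclude via Lemma~\ref{pdHeight} and Auslander--Buchsbaum that $\hgt(I)=\pd_S(S/I)=2$. The only difference is that you spell out the verification that $\operatorname{mi}(L)=\operatorname{at}(L)$ and the minimal-prime count in more detail than the paper does.
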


\begin{proof} In this case the atoms, coatoms, and meet-irreducible elements coincide.  Applying Phan's algorithm shows that each monomial generator of $I$ is a product of all but one of the variables.  Thus, any ideal generated by two distinct variables is a minimal prime of $I$ while no single variable divides every generator.  It follows that $\hgt(I) = \pd_S(S/I) = 2$ and that $S/I$ is Cohen-Macaulay.
\end{proof}

\begin{rmk}
    In fact, it is not hard to see that the ideal in Lemma~\ref{height2lattices} has a Hilbert-Burch resolution associated to the $(n+1) \times n$ matrix:
    \[
    \begin{pmatrix}
        x_1&0&0&\cdots&0&0\\
        -x_2&x_2&0&\cdots&0&0\\
        0&-x_3&x_3& &0&0\\
        \vdots & & \ddots & \ddots &&\vdots\\
        0&0&0 &\cdots & -x_{n-1} & x_{n-1}\\
        0&0&0&\cdots&0&-x_n\\
    \end{pmatrix}
    \]
\end{rmk}

We can now prove our main result of this section.

\begin{proof}[Proof of Theorem~\ref{Modular}]
    Since $I$ is minimal, we identify it with the Phan ideal of its LCM lattice $\L_I$.
    By assumption $\L_I$ is modular. Hence, by Theorem~\ref{Birkhoff}, $\L_I \cong L_1 \times \dotsb \times L_k$ for modular lattices $L_1,\dotsc,L_k$, where each is a lattice of a finite vector space, an incidence lattice of a finite projective plane, or a lattice of height $1$ or $2$.  Writing $I_i$ for the Phan ideal of $L_i$ and $S_i$ the associated polynomial ring, note that $S/I \cong S_1/I_1 \otimes_\kk \cdots \otimes_\kk S_t/I_t$.  By \cite[Theorem 2.1]{BK02}, it suffices to show that $S_i/I_i$ is Cohen-Macaulay for each $i$.  The cases where $L_i$ is a lattice associated to a finite vector space, a projective plane, or a height 2 graded lattice are covered by Lemmas~\ref{subspacesModular}, \ref{projectivePlaneModular}, \ref{height2lattices}, respectively.  The case where $L_i$ has height one is trivial as $S_i/I_i \cong \kk[x]/(x) \cong \kk$.  The result follows.
\end{proof}

After discussing this result with Vinh Nguyen, we came across another method to prove Theorem~\ref{Modular}, which perhaps explains why the modular lattice hypothesis is critical.  First, we need the notion of a matroid cover ideal.  Let $M$ be a simple matroid on $E = \{1,\ldots,n\}$, considered as a simplicial complex.  The \term{cover ideal} of $M$ is the squarefree monomial ideal
\[J(M) = \bigcap_{F \in \mathcal{F}(M)} \mathfrak{p}_{F} \subseteq \kk[x_1,\ldots,x_n],\]
where $\mathcal{F}(M)$ denotes the facets of the simplicial complex $M$ and $\mathfrak{p}_F = \left( x_i \mid i \in F\right) \subseteq \kk[x_1,\ldots,x_n]$.  This is exactly the Stanley Reisner ideal corresponding to the dual $M^\ast$ of the matroid $M$.  For definitions related to matroids, we refer the reader to \cite{Welsh76}.

Next we prove the rather astonishing fact is that the matroid cover ideal of a simple matroid and the Phan ideal of its lattice of flats coincide precisely when the lattice is modular.

\begin{thm}\label{cover} Let $M$ be a simple matroid whose lattice of flats $L = \mathcal{L}(M)$ is modular.  Then the Phan ideal of $L$ and the cover ideal of $M$ coincide; that is
\[M(L) = J(M).\]
In particular, $M(L)$ is Cohen-Macaulay.
\end{thm}

\begin{proof}
 By \cite[Proposition 2.9]{MN24}, the minimal generators of $J(M)$ are exactly the monomials $x_{[n] \smallsetminus H}$, where $H$ is a hyperplane of $M$.  

Now consider the Phan ideal of the lattice of flats $\mathcal{L}$ of $M$.  Since $L$ is modular, the meet-irreducible elements are precisely the coatoms.   The coatoms in $\mathcal{L}$ are precisely the hyperplanes in $M$.   Also since $L$ is modular, $L$ is self-dual and we can identify coatoms with atoms.  By Phan's algorithm, the generators of the minimal monomial ideal associated to $\mathcal{L}$ correspond precisely to the complements of the hyperplanes by the above identification.  Thus the ideals are the same.  The Cohen-Macaulay property then follows from \cite[Theorem 2.1]{Varbaro11} or \cite[Theorem 3.5]{MT11}.
\end{proof}

 When a geometric lattice $L$ is not modular, then it must have more coatoms than atoms by \cite[Theorem 2]{greene_1970}.  Thus the cover ideal of a matroid and the Phan ideal of its (geometric) lattice of flats are never equal outside the modular setting - they live in polynomial rings with different numbers of variables.

 Note that while the Cohen-Macaulay property of a monomial ideal can depend on the characteristic of the coefficient field \cite[Example 12.4]{Peeva11}, the previous results are characteristic-free.  However, it only applies to minimal monomial ideals.  Indeed, we could simply take a minimal monomial ideal of height at least two with modular LCM lattice and multiply all generators by a new variable; this would not change the associated LCM lattice but the new ideal would then have height one and thus would no longer be Cohen-Macaulay.

In the following example, we consider the simplest example of a projective plane, describe its lattice ideal, the LCM lattice, and some properties of the minimal free resolution.

\begin{example}
    The Fano Plane, depicted in Figure~\ref{Fano-Plane}, is the unique projective plane of order $2$, making it the smallest projective plane. Each line contains $3$ points and each point is incident to $3$ lines.
    \begin{figure}[ht]
        \centering
    \begin{tikzpicture}
        \draw (0,1*1.732) circle (1*1.732);
        \node (1) at (0,0) [circle, draw, fill] {};
        \node (2) at (0,3*1.732) [circle, draw, fill] {};
        \node (3) at (3,0) [circle, draw, fill] {};
        \node (4) at (-3,0) [circle, draw, fill] {};
        \node (5) at (-1.5,1.5*1.732) [circle, draw, fill] {};
        \node (6) at (1.5,1.5*1.732) [circle, draw, fill] {};
        \node (7) at (0,1*1.732) [circle, draw, fill] {};
        \draw (1) -- (2);
        \draw (2) -- (3) -- (4) -- (2) -- cycle;
        \draw (3) -- (5);
        \draw (4) -- (6);
        \node at (0,3*1.732+0.35) {};
        \node at (-3-0.35,-0.35) {};
        \node at (3+0.35,-0.35) {};
        \node at (-1.5-0.35,1.5*1.732+0.35) {};
        \node at (1.5+0.35,1.5*1.732+0.35) {};
        \node at (0,-0.35) {};
        \node at (0+0.35,1*1.732+0.50) {};
    \end{tikzpicture}
   \caption{Fano Plane}
   \label{Fano-Plane}
    \end{figure}
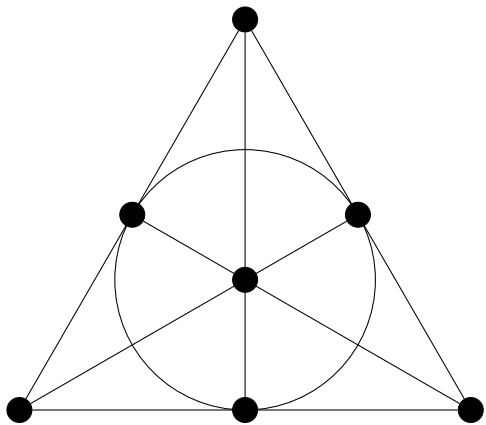
    The associated modular lattice is shown in Figure~\ref{FanoPlaneLattice}.  
     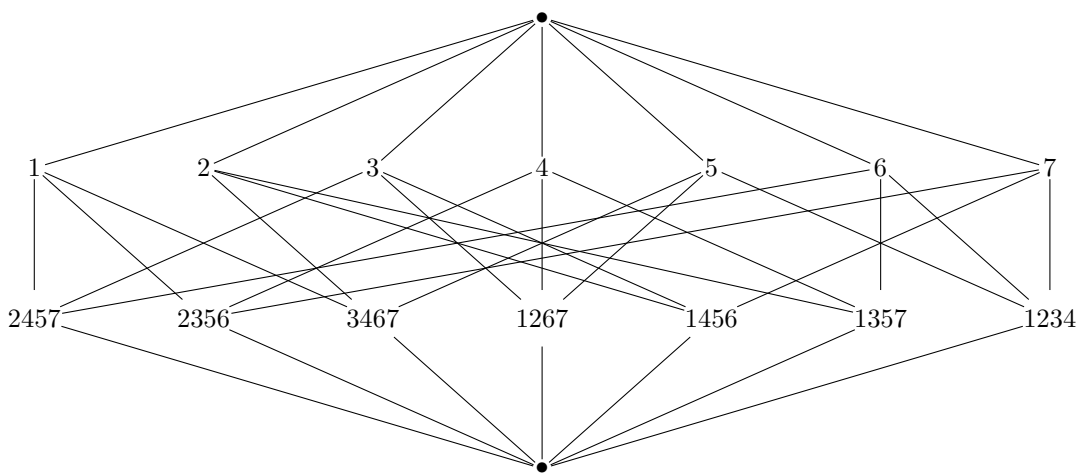
\begin{figure}[ht]
        \centering
         \begin{tikzpicture}[scale=1.5, vertices/.style={circle, inner sep=0pt}]
            \node [vertices] (U) at (0,4){$\bullet$};
             \node [vertices] (a) at (-4.5+0,1.33333){$2457$};
             \node [vertices] (b) at (-4.5+1.5,1.33333){$2356$};
             \node [vertices] (c) at (-4.5+3,1.33333){$3467$};
             \node [vertices] (d) at (-4.5+4.5,1.33333){$1267$};
             \node [vertices] (e) at (-4.5+6,1.33333){$1456$};
             \node [vertices] (f) at (-4.5+7.5,1.33333){$1357$};
             \node [vertices] (g) at (-4.5+9,1.33333){$1234$};
             \node [vertices] (7) at (-4.5+0,2.66667){$1$};
             \node [vertices] (6) at (-4.5+1.5,2.66667){$2$};
             \node [vertices] (5) at (-4.5+3,2.66667){$3$};
             \node [vertices] (4) at (-4.5+4.5,2.66667){$4$};
             \node [vertices] (3) at (-4.5+6,2.66667){$5$};
             \node [vertices] (2) at (-4.5+7.5,2.66667){$6$};
             \node [vertices] (1) at (-4.5+9,2.66667){$7$};
             \node [vertices] (T) at (0,0){$\bullet$};
             \foreach \to/\from in {2/a, 5/a, 7/a, 1/b, 4/b, 7/b, 3/c, 6/c, 7/c, 3/d, 4/d, 5/d, 1/e, 5/e, 6/e, 2/f, 4/f, 6/f, 1/g, 2/g, 3/g, a/T, U/1, b/T, U/2, c/T, U/3, d/T, U/4, e/T, U/5, f/T, U/6, g/T, U/7}
     \draw [-] (\to)--(\from);
     \end{tikzpicture}
        \caption{Lattice of Flats for the Fano Plane}
        \label{FanoPlaneLattice}
    \end{figure}
    The coatoms, which are all of the meet-irreducible elements, are labeled by distinct variable indices.  The atoms, corresponding to the generators of the associated minimal monomial ideal, are labeled by products of variables corresponding to coatoms not lying above it.
   The associated minimal monomial ideal $I$ is shown below: 
    \begin{align*}
        I = \left(x_2x_4x_5x_7,x_2x_3x_5x_6,x_3x_4x_6x_7,x_1x_2x_6x_7,x_1x_4x_5x_6,x_1x_3x_5x_7,x_1x_2x_3x_4\right).
    \end{align*}
   
    \noindent Using Macaulay2 \cite{M2}, the graded Betti table of $S/I$ is computed to be:
    \[\begin{array}{l|cccc}
         & 0 & 1 & 2 & 3\\
         \hline
      0 & 1 & - & - & -\\
      1 & - & - & - & -\\
      2 & - & - & - & -\\
      3 & - & 7 & - & -\\
      4 & - & - & 14 & 8
      \end{array}\]
      with each associated prime having $3$ generators. Thus, $S/I$ is Cohen-Macaulay, since the codimension and projective dimension are equal to $3$.  Moreover, this is a pure diagram, meaning in each column of  the Betti table contains a single nonzero entry.  Such diagrams are the basis of Boij-S\"oderberg decompositions of arbitrary Betti tables, and it is of interest to realize them as the Betti tables of modules  \cite{ES09}.  Modular lattices are thus homologically pure in the sense of \cite{FMS16}, which we now prove in general.
\end{example}

\begin{cor}
    Fix a prime power $q = p^n \in \mathbb{N}$. The Betti table of the minimal monomial ideal $I$ associated to the lattice of subspaces of a finite vector space $V = \mathbb{F}^r_q$ is pure with degree sequence 
    \[d = (0,q^{r-1},q^{r-1}+q^{r-2},\ldots,q^{r-1} + \cdots + q + 1).\]
\end{cor}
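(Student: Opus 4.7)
The plan is to combine Phan's algorithm with Theorem~\ref{LatticeHomology} and Folkman's theorem on the reduced homology of the proper part of a geometric lattice \cite[Theorem 4.1]{Folkman66}, the same homological input invoked in the proof of Theorem~\ref{Boolean:LCM:lattice}. Write $L$ for the lattice of subspaces of $V = \mathbb{F}_q^r$, so that by Theorem~\ref{Phan} we have $\L_I \cong L$ and $I = M(L)$. As observed in the proof of Lemma~\ref{subspacesModular}, the meet-irreducibles of $L$ are precisely its coatoms, corresponding to the hyperplanes of $V$. For each $k$-dimensional subspace $W \subseteq V$, Phan's algorithm labels the corresponding lattice element $m_W$ by $\prod_{W \not\subseteq H} x_H$. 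Since the hyperplanes of $V$ containing $W$ are in bijection with the hyperplanes of the quotient $V/W \cong \mathbb{F}_q^{r-k}$, we compute
\[
\deg(m_W) \;=\; \frac{q^r-1}{q-1} - \frac{q^{r-k}-1}{q-1} \;=\; q^{r-1} + q^{r-2} + \cdots + q^{r-k} \;=:\; d_k.
\]

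The key combinatorial step is to identify the lower interval $[\hat{0}, m_W]$ in $L$ with the lattice of subspaces of $W$ itself. Under Phan's algorithm, the atom $m_p$ associated with a point $p \in V$ divides $m_W$ if and only if every hyperplane containing $W$ also contains $p$; since every subspace of a finite-dimensional vector space is the intersection of the hyperplanes containing it, this is equivalent to $p \in W$. Because joins in $\L_I$ correspond to spans of atoms, this point-by-point identification extends to a lattice isomorphism. Thus $[\hat{0}, m_W]$ is a geometric lattice of rank $k$, and Folkman's theorem forces the reduced homology $\tilde{H}_j(O((\hat{0}, m_W)); \kk)$ to vanish except in the top degree $j = k-2$, where it is nonzero.

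Combining these facts with Theorem~\ref{LatticeHomology}, the multigraded Betti number $\beta^S_{i, m_W}(S/I)$ is nonzero if and only if $i = \rk_L(m_W) = k$, in which case $m_W$ carries total degree $d_i$. Because subspaces of each dimension $i \in \{1, \ldots, r\}$ certainly exist, and $\beta^S_{0,0}(S/I) = 1$ supplies the $i = 0$ contribution at degree $d_0 = 0$, each column of the Betti table of $S/I$ has support in exactly one row, namely the row for degree $d_i$. This establishes purity with the claimed degree sequence $(d_0, d_1, \ldots, d_r) = (0, q^{r-1}, q^{r-1} + q^{r-2}, \ldots, q^{r-1} + q^{r-2} + \cdots + q + 1)$. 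The main technical hurdle is verifying the lattice isomorphism $[\hat{0}, m_W] \cong \{\text{subspaces of } W\}$; once it is in place, the uniform degree count on rank-$k$ elements together with Folkman's concentration of homology deliver purity immediately.
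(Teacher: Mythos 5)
Your proposal is correct and follows essentially the same route as the paper's proof: compute the degree of the monomial attached to each rank-$k$ element by counting hyperplanes not containing the corresponding subspace, observe that the lower intervals are geometric lattices, and invoke Folkman's theorem together with Theorem~\ref{LatticeHomology} to concentrate the homology (hence the multigraded Betti numbers) in homological degree equal to the rank. The only cosmetic differences are that you verify the identification $[\hat{0},m_W]\cong\{\text{subspaces of }W\}$ explicitly where the paper simply asserts that all intervals are geometric, and you phrase the degree count via $V/W$ rather than by directly counting hyperplanes through $W$.
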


\begin{proof}
By Theorem~\ref{Modular}, $S/I$ is Cohen-Macaulay.  By Theorem~\ref{LatticeHomology}, the nonzero Betti numbers correspond to the degrees of monomials in the LCM lattice of $I$.  Note that there are $\frac{q^r-1}{q-1} = 1 + q + \cdots + q^{r-1}$ many lines and, dually, $\frac{q^r-1}{q-1}$ variables corresponding to the $\frac{q^r-1}{q-1}$ hyperplanes of $V$.  Elements of rank $i$, corresponding to subspaces of dimension $i$, are each contained in exactly $1+q+\cdots+q^{r-1-i}$ many hyperplanes.  Thus, the associated monomials have degree $(q^{r-1}-1) - (1 + q + \cdots + q^{r-1-i}) = q^{r-1} + q^{r-2} + \cdots + q^{r-i}$.  Since the lattice $L$ of subspaces of $V$ is atomic and modular, it is geometric.  Similarly, all intervals $[1,m]_L$ in $L$ are also geometric for all $m \in L$.  By \cite[Theorem 4.1]{Folkman66}, the reduced homology $\tilde{H}_{i-2}(O((1,m)_L;\kk)$ is only nonzero for $i = \rk(m)$.  The conclusion follows from Theorem~\ref{LatticeHomology}.
\end{proof}

The following example shows that the converse to Theorem~\ref{Modular} does not hold.

\begin{example}
    Take $S := \kk[x_1,\dotsc,x_6]$ and consider the edge ideal $I(G) \subseteq S$ of the following bipartite graph $G$. 
    \begin{align*}
    \begin{tikzpicture}
        \node (c) at (0,0) [circle, fill]{};
        \node (a) at (-2,0) [circle, fill]{};
        \node (e) at (2,0) [circle, fill]{};
        \node (d) at (0,-2) [circle, fill]{};
        \node (b) at (-2,-2) [circle, fill]{};
        \node (f) at (2,-2) [circle, fill]{};
        \node at (0,0.35) {2};
        \node at (-2,0.35) {1};
        \node at (2,0.35) {3};
        \node at (0,-2-.35) {5};
        \node at (-2,-2-.35) {4};
        \node at (2,-2-.35) {6};
        \draw (a) -- (b);
        \draw (a) -- (f);
        \draw (c) -- (d);
        \draw (c) -- (f);
        \draw (e) -- (f);
    \end{tikzpicture}
    \end{align*}
    The edge ideal is given by
    \begin{align*}
        I(G) = (x_1x_4, x_1x_6, x_2x_5, x_2x_6, x_3x_6).
    \end{align*}
    By Theorem~\ref{Phan}, $I(G)$ is the minimal monomial ideal for its LCM lattice.  One can observe that this graph comes from a poset on three elements $\{a,b,c\}$ with $a,b \leq c$ and $a,b$ incomparable. Thus, by \cite[Lemma 9.1.11]{Herzog11}, $S/I$ is Cohen-Macaulay.  However, since $G$ is not gap-free, $\L_{I(G)}$ is not graded by Theorem~\ref{GradedGraph}; in particular, it is not geometric.
\end{example}

The following example shows that the hypotheses of Theorem~\ref{Modular} cannot be significantly weakened.

\begin{example}
    
     Consider the following graph $G$:
    \begin{align*}
        \begin{tikzpicture}
            \node (a) at (1,1) [circle, fill]{};
            \node (b) at (1,-1) [circle, fill]{};
            \node (c) at (-1,1) [circle, fill]{};
            \node (d) at (-1,-1) [circle, fill]{};
            \draw (a) -- (b);
            \draw (a) -- (c);
            \draw (b) -- (c);
            \draw (b) -- (d);
            \draw (c) -- (d);
        \end{tikzpicture}
    \end{align*}
    Let $M(G)$ denote the graphic matroid of $G$ and let $L = L(M(G))$ denote the associated lattice of flats, pictured in Figure~\ref{chordalGraphLattice}.
    \begin{figure}[ht]
        \centering
         \begin{tikzpicture}[scale=1.5, vertices/.style={circle, inner sep=0pt}]
            \node [vertices] (U) at (0,4){$12345$};
             \node [vertices] (b) at (-4.5+1.5,1.33333){$1$};
             \node [vertices] (c) at (-4.5+3,1.33333){$2$};
             \node [vertices] (d) at (-4.5+4.5,1.33333){$3$};
             \node [vertices] (e) at (-4.5+6,1.33333){$4$};
             \node [vertices] (f) at (-4.5+7.5,1.33333){$5$};
             \node [vertices] (6) at (-4.5+1.5-.75,2.66667){$123$};
             \node [vertices] (5) at (-4.5+3-.75,2.66667){$14$};
             \node [vertices] (4) at (-4.5+4.5-.75,2.66667){$24$};
             \node [vertices] (3) at (-4.5+6-.75,2.66667){$15$};
             \node [vertices] (2) at (-4.5+7.5-.75,2.66667){$25$};
             \node [vertices] (1) at (-4.5+9-.75,2.66667){$345$};
             \node [vertices] (T) at (0,0){$\varnothing$};
             \foreach \to/\from in {6/b, 6/c,  6/d, 6/c,  4/e, 4/c, 5/b, 5/e, 3/b, 3/f, 2/c, 2/f, 1/d, 1/e, 1/f, U/1, b/T, U/2, c/T, U/3, d/T, U/4, e/T, U/5, f/T, U/6}
     \draw [-] (\to)--(\from);
     \end{tikzpicture}
        \caption{Lattice of flats of graphic matroid associated to $G$ (labeled by edges of $G$)}
        \label{chordalGraphLattice}
    \end{figure}
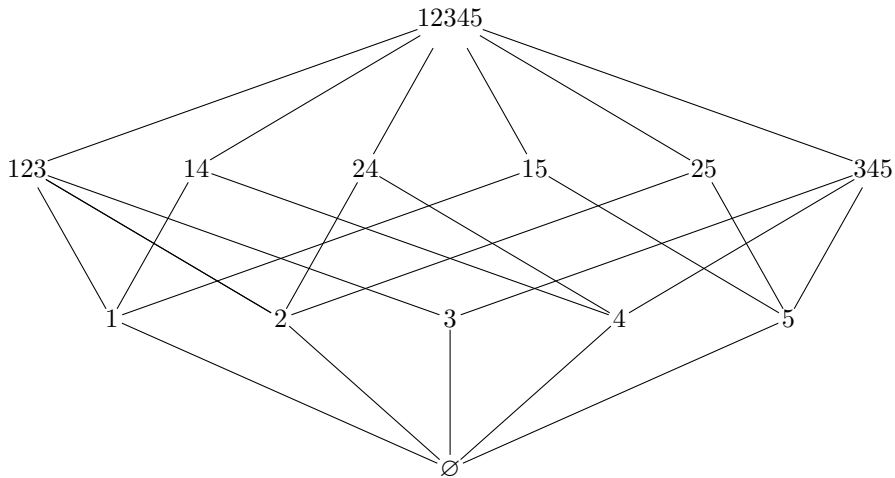
    By \cite[Theorem 1.7.5]{Oxley92}, $L$ is a geometric lattice.  By \cite{Stanley11}, it is supersolvable, since $G$ is a chordal graph.  It is also not hard to check that $L$ is complemented.  

    The associated minimal monomial ideal is:
    
    \begin{align*}      
        I = \left(x_{1}x_{3}x_{5},\,x_{3}x_{4}x_{6},\,x_{1}x_{2}x_{3}x_{4},\,x_{2}x_{4}x_{5},\,x_{1}x_{2}x_{6
      }\right) \subseteq k[x_1,\dotsc,x_6] =: S.
    \end{align*}
    It is easy to see that $(x_2,x_3)$ is a minimal prime and so $\hgt(I) = 2$.  Yet, by Theorem~\ref{pdim:Geometric:Lattices}, $\pd_S(S/I) = 3$.  Thus $S/I$ is not Cohen-Macaulay, even those $L$ is geometric, supersolvable, and complemented.
\end{example}

\section{Projective Dimension versus Height of the LCM Lattice}\label{sec:pdim}

As noted in Lemma~\ref{pdHeight}, one always has that the projective dimension of a monomial ideal is bounded above by the height of the associated LCM lattice.  In this section, we give necessary and sufficient conditions for when equality occurs.  

\begin{thm}\label{pdim:Geometric:Lattices}
Let $I \subseteq S$ be a monomial ideal. If $\L_I$ is geometric, then $\pd_S(S/I) =  \rank(\L_I)$.
\end{thm}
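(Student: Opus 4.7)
The plan is to combine Lemma~\ref{pdHeight} with a direct construction of a nonvanishing Betti number at homological degree $r := \rank(\L_I)$, using Folkman's theorem on the homology of geometric lattices (which has already been invoked in the proof of Theorem~\ref{Boolean:LCM:lattice}).

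Since geometric lattices are graded, $\rank(\L_I) = \operatorname{ht}(\L_I)$, so Lemma~\ref{pdHeight} already gives $\pd_S(S/I) \leq r$. Thus the task reduces to producing a monomial $m \in \L_I$ and proving $\beta^S_{r,m}(S/I) \neq 0$. The natural candidate is $m = \hat{1}$, the top element of $\L_I$, so that $(1,m)_{\L_I}$ is the entire proper part of $\L_I$. Because $\L_I$ is graded of rank $r$, all maximal chains in this open interval have exactly $r-1$ elements, hence the top-dimensional faces of $O((1,\hat{1})_{\L_I})$ are $(r-2)$-simplices.

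The main input is then Folkman's theorem \cite[Theorem 4.1]{Folkman66}: for a geometric lattice of rank $r$, the order complex of the proper part has nonvanishing reduced homology in its top dimension, namely
\[
\tilde{H}_{r-2}\bigl(O((1,\hat{1})_{\L_I}); \kk\bigr) \neq 0.
\]
Applying Theorem~\ref{LatticeHomology} with $m = \hat{1}$ and $i = r$ then yields $\beta^S_{r,\hat{1}}(S/I) \neq 0$, so $\pd_S(S/I) \geq r$. Combined with the upper bound from Lemma~\ref{pdHeight}, this forces $\pd_S(S/I) = r = \rank(\L_I)$.

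There is no real obstacle here: the only delicate point is matching conventions, namely checking that the "rank" of $\L_I$ as a graded lattice agrees with $\operatorname{ht}(\L_I)$ as defined in Section~\ref{sec:background}, and that the top-dimensional simplices of the order complex of the open interval $(\hat{0},\hat{1})$ sit in dimension $r-2$ so that Folkman's theorem applies directly in the homological degree needed by Theorem~\ref{LatticeHomology}. Both are immediate from the definitions, so the argument is essentially a one-line invocation of Folkman's theorem at the top element of $\L_I$.
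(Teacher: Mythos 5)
Your proposal is correct and follows essentially the same route as the paper: upper bound from Lemma~\ref{pdHeight}, then Folkman's theorem applied to the proper part of $\L_I$ at the top element $\hat{1}$, fed into Theorem~\ref{LatticeHomology} to get $\beta^S_{r,\hat{1}}(S/I)\neq 0$. The only thing you gloss over is that Folkman's theorem literally gives $\tilde{H}_{r-2}\cong\mathbb{Z}^{|\mu(\hat{0},\hat{1})|}$, so to conclude nonvanishing (and to pass to $\kk$-coefficients) the paper additionally invokes Rota's theorem that the M\"obius function of a geometric lattice is nonzero, together with universal coefficients.
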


\begin{proof}
    If $L$ is a geometric lattice of rank $r \ge 2$, then by \cite[Theorem 4.1]{Folkman66}, $\tilde{H}_{r-2}(O((\hat{0},\hat{1})_{L});\mathbb{Z}) \cong \mathbb{Z}^{|\mu(\hat{0},\hat{1})|}$, while $\tilde{H}_{i}(O((\hat{0},\hat{1})_{L});\mathbb{Z}) = 0$ for $i \neq r-2$, where $\mu(x,y)$ denotes the M\"obius function of the interval $(x,y)$ in $L$.  By \cite[Theorem 4]{Rota64}, since $L$ is geometric, $\mu(x,y) \neq 0$ for any $x \le y$.  The Universal Coefficients Theorem \cite[Theorem 9.32]{Rotman79} implies that $\tilde{H}_{r-2}(O((\hat{0},\hat{1})_{L});\kk)  \cong \tilde{H}_{r-2}(O((\hat{0},\hat{1})_{L});\mathbb{Z}) \otimes_\mathbb{Z} \kk \cong \kk^{|\mu(\hat{0},\hat{1})|} \neq 0.$  By Theorem~\ref{LatticeHomology}, $\beta_{r,m}^S(S/I) \neq 0$, where $m$ denotes the top monomial of $\L_I$.  It follows that $\pd_S(S/I) = r$.
\end{proof}

In \cite{Peeva02}, Peeva gives two constructions of the minimal free resolution of a monomial ideal with geometric LCM lattice.  One could equivalently derive Theorem~\ref{pdim:Geometric:Lattices} from \cite[Theorem 2.1]{Peeva02} and the fact that no-broken-circuit sets are independent sets and have cardinality equal to the rank of the associated lattice.

We also get the following dual statement to Theorem~\ref{pdim:Geometric:Lattices} by the same proof.

\begin{cor}\label{cor:lsm+coatomic}
Let $I \subseteq S$ be a monomial ideal.  If $\L_I$ is lower semimodular and coatomic, then $\pd_S(S/I) = \rank(\L_I)$.
\end{cor}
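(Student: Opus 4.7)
The plan is to reduce the statement to Theorem~\ref{pdim:Geometric:Lattices} by passing to the dual lattice. Write $L = \L_I$ and let $L^*$ denote the opposite lattice (same underlying set, order reversed). The first step is to check that lower semimodularity plus coatomicity in $L$ dualizes cleanly to upper semimodularity plus atomicity in $L^*$ of the same rank $r := \rank(L)$: reversing the order interchanges meets with joins and replaces the rank function $\rk_L$ by $r - \rk_L$, which flips the semimodular inequality, while coatoms of $L$ become atoms of $L^*$. Hence $L^*$ is a geometric lattice of rank $r$.

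The second step exploits that order complexes do not see the direction of the order: chains in $L$ and chains in $L^*$ are the same subsets, so $O((\hat 0, \hat 1)_L)$ and $O((\hat 0, \hat 1)_{L^*})$ coincide as simplicial complexes, and therefore have identical reduced simplicial homology over any coefficient ring. I would then apply the same chain of results used in the proof of Theorem~\ref{pdim:Geometric:Lattices} to the geometric lattice $L^*$: Folkman's theorem \cite[Theorem 4.1]{Folkman66} gives $\tilde H_{r-2}(O((\hat 0, \hat 1)_{L^*}); \mathbb{Z}) \cong \mathbb{Z}^{|\mu(\hat 0, \hat 1)|}$; Rota's theorem \cite[Theorem 4]{Rota64} ensures this M\"obius value is nonzero; and the Universal Coefficients Theorem transports the nonvanishing to $\kk$-coefficients, identifying it with $\tilde H_{r-2}(O((\hat 0, \hat 1)_L); \kk)$.

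Finally, Theorem~\ref{LatticeHomology} converts this nonvanishing into $\beta^S_{r,m}(S/I) \neq 0$, where $m$ is the top element of $\L_I$, so $\pd_S(S/I) \geq r$. The reverse bound $\pd_S(S/I) \leq \rank(\L_I)$ is exactly Lemma~\ref{pdHeight}, giving equality.

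The only genuinely new content beyond the proof of Theorem~\ref{pdim:Geometric:Lattices} is the duality bookkeeping in the first step: verifying that the rank functions, meet/join operations, and the atomic/coatomic condition interchange correctly under order reversal, and confirming that lower semimodular (which presupposes gradedness) carries over to upper semimodular on $L^*$. This is more a careful check than a real obstacle; once it is in place, the remainder of the argument is a verbatim translation of the geometric case through the order-reversal.
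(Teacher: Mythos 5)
Your proposal is correct and follows the same route as the paper: the paper's proof simply observes that $\L_I^\ast$ is geometric and says the rest is identical to Theorem~\ref{pdim:Geometric:Lattices}. You have merely written out the duality bookkeeping and the observation that the order complex of the open interval is unchanged under order reversal, both of which the paper leaves implicit.
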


\begin{proof}
    If $\L_I$ is lower semimodular and coatomic, then the dual lattice $\L_I^\ast$ is geometric.  The proof is then the same as Theorem~\ref{pdim:Geometric:Lattices}.
\end{proof}

\noindent While there aren't many edge ideals with geometric LCM lattices, the previous corollary, combined with Theorems~\ref{thm:lsm} and \ref{thm:coatomic:lcm:lattices}, shows that complete graphs satisfy $\pd_S(S/I(K_n)) = \rank(\L_{I(K_n)}) = n-1$ for all $n \ge 2$.

The converses of Theorem~\ref{pdim:Geometric:Lattices} and Corollary~\ref{cor:lsm+coatomic} does not hold, as the following example shows.

    \begin{example}
    Consider the edge ideal of the graph pictured in Figure~\ref{fig:pdim=ht}.
    \begin{figure}[ht]
    \begin{center}
        \begin{tikzpicture}
            \node (0) at (0, 0) [circle,draw, fill,inner sep=0pt,minimum size=3pt] { };
            \node (1) at (-1, 1) [circle,draw, fill,inner sep=0pt,minimum size=3pt] { };
            \node (2) at (-1, -1) [circle,draw, fill,inner sep=0pt,minimum size=3pt] { };
            \node (3) at (1, 1) [circle,draw, fill,inner sep=0pt,minimum size=3pt] { };
            \node (4) at (1, -1) [circle,draw, fill,inner sep=0pt,minimum size=3pt] { };

            \draw (0) to (1);
            \draw (1) to (2);
            \draw (2) to (3);
            \draw (3) to (4);
            \draw (1) to (4);
        \end{tikzpicture}
    \end{center}
    \caption{A graph $G$ whose LCM lattice is not geometric but for with $\pd_S(S/I(G)) = \hgt(\L_{I(G)})$.}\label{fig:pdim=ht}
    \end{figure}
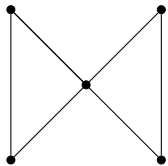
    By Theorem~\ref{GradedGraph}, $\L_{I(G)}$ is not graded and hence not upper or lower semimodular.
     However, one checks with Macaulay2 that  $\pd_S(S/I(G)) = \hgt(\L_{I(G)}) = 4$.
\end{example}

A lattice $L$ is called \term{strongly complemented} if for every $x \in L$, there exist elements $y,z \in L$ such that $y$ is a meet of coatoms, $z$ is a join of atoms, $x \vee y = x \vee z = \hat{1}$, and $x \wedge y = x \wedge z = \hat{0}$.  Clearly such lattices are complemented but not conversely.  

\begin{thm}\label{pdim:strongly:complemented:lattices}
Let $I \subseteq S$ be a monomial ideal.  If $\pd_S(S/I) = \hgt(\L_I)$, then $\L_I$ is strongly complemented.
\end{thm}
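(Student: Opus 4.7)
The plan is to first translate the hypothesis $\pd_S(S/I) = \hgt(\L_I) =: r$ into the non-vanishing of the top reduced homology of the order complex $O((\hat{0}, \hat{1})_{\L_I})$.  By Theorem~\ref{LatticeHomology} there exists $m \in \L_I$ with $\tilde{H}_{r-2}(O((\hat{0}, m)_{\L_I}); \kk) \neq 0$; if $m$ were less than $\hat{1}$, a chain of length $r$ in $[\hat{0}, m]$ could be extended by $\hat{1}$ to a chain of length $r+1$, contradicting $\hgt(\L_I) = r$.  Thus $m = \hat{1}$.  From here the argument splits into two parts, one for each clause of the strong-complementation hypothesis.

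The join-of-atoms part is nearly immediate.  By Bj\"orner's homotopy complementation theorem, a finite bounded lattice containing an element without a complement has contractible proper-part order complex; combined with the non-vanishing of $\tilde{H}_{r-2}$, this shows $L := \L_I$ is complemented.  Since $L$ is atomic, every complement is automatically a join of atoms.

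The meet-of-coatoms part is the heart of the argument and its main obstacle.  First I would verify $\bigwedge \operatorname{coat}(L) = \hat{0}$: otherwise $a := \bigwedge \operatorname{coat}(L) > \hat{0}$, and the map $\psi(y) = y \vee a$ is a well-defined closure operator on $L \setminus \{\hat{0}, \hat{1}\}$ (every proper element sits below a coatom, which itself dominates $a$), with image $[a, \hat{1}) \cap L$ admitting $a$ as a minimum.  The order complex of a poset with minimum is contractible, and by Quillen's closure-operator retraction so is $O((\hat{0}, \hat{1})_L)$, contradicting Step~1.  Next, the closure $\phi(y) := \bigwedge\{c \in \operatorname{coat}(L) : c \geq y\}$ on $L \setminus \{\hat{0}, \hat{1}\}$ has image equal to the set of non-trivial meets of coatoms, and together with $\hat{0}$ and $\hat{1}$ this image forms a bounded lattice $L'$ under the inherited meet and the join $y_1 \vee' y_2 := \phi(y_1 \vee_L y_2)$.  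Quillen's lemma yields $O(L \setminus \{\hat{0}, \hat{1}\}) \simeq O(L' \setminus \{\hat{0}, \hat{1}\})$, preserving the non-zero $\tilde{H}_{r-2}$, so Bj\"orner applied to $L'$ yields that every element of $L'$ has a complement in $L'$.

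To finish, given $x \in L$, pick $y \in L'$ to be a complement of $\phi(x)$ in $L'$.  The inequality $x \leq \phi(x)$ and $y \wedge_L \phi(x) = \hat{0}$ give $y \wedge_L x = \hat{0}$.  If $y \vee_L x < \hat{1}$, it lies below some coatom $c$; then $c \geq x$ implies $c \geq \phi(x)$ by the very definition of $\phi$, and since $\phi(x) \vee' y = \hat{1}$ in $L'$ is equivalent to $\phi(x) \vee_L y = \hat{1}$, this gives $c \geq \hat{1}$, a contradiction.  Hence $y \vee_L x = \hat{1}$, and $y$ is a meet of coatoms by construction, completing the argument.  The principal difficulty is orchestrating the closure-operator reduction to $L'$ and transferring complements from $\phi(x) \in L'$ back to arbitrary $x \in L$ that might themselves fail to be meets of coatoms.
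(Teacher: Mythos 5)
Your proposal is correct, and its overall skeleton is the same as the paper's: translate $\pd_S(S/I)=\hgt(\L_I)$ into non-vanishing of $\tilde{H}_{r-2}(O((\hat{0},\hat{1})_{\L_I});\kk)$ via Theorem~\ref{LatticeHomology} (noting that only $m=\hat{1}$ can contribute in homological degree $r$), and then conclude non-contractibility forces strong complementation. The difference is in how that last implication is handled. The paper simply cites Bj\"orner's Theorem~3.3 from \cite{Bjorner81}, which packages exactly the statement ``not strongly complemented $\Rightarrow$ $O((\hat{0},\hat{1})_L)$ contractible,'' so its proof is three lines. You instead invoke only the weaker, more standard form of Bj\"orner's complementation theorem (an element without a complement forces contractibility) and then rebuild the rest: atomicity of $\L_I$ disposes of the join-of-atoms clause for free (a nice shortcut unavailable for general lattices), and the meet-of-coatoms clause is obtained by two Quillen closure-operator reductions --- first $y\mapsto y\vee\bigwedge\operatorname{coat}(L)$ to force $\bigwedge\operatorname{coat}(L)=\hat{0}$, then $\phi(y)=\bigwedge\{c\in\operatorname{coat}(L):c\ge y\}$ to pass to the coatomic lattice $L'$ of meets of coatoms --- followed by a clean transfer of complements from $\phi(x)\in L'$ back to $x\in L$. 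All of these steps check out (including the identification of $\wedge'$ and $\vee'$ with the operations of $L$, which is what makes a complement in $L'$ a complement in $L$), so what you have is essentially a self-contained proof of the half of Bj\"orner's Theorem~3.3 that the theorem needs, specialized to atomic lattices. The trade-off: the paper's route is far shorter but opaque without the reference; yours is longer but elementary modulo the basic complementation theorem and Quillen's closure lemma, and it isolates where atomicity of LCM lattices simplifies matters. Only cosmetic gaps remain (the degenerate cases $x=\hat{0},\hat{1}$, and the principal-ideal case $\L_I=\{\hat{0},\hat{1}\}$, are not addressed explicitly, but are trivial).
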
 

\begin{proof}
    Suppose $\L_I$ is not strongly complemented.  By \cite[Theorem 3.3]{Bjorner81}, $O((\hat{0},\hat{1})_{\L_I})$ is contractible.  Hence $\tilde{H}_i(O((\hat{0},\hat{1})_{\L_I});\kk) = 0$ for all $i$.  By Theorem~\ref{LatticeHomology}, $\pd_S(S/I) < \hgt(\L_I)$.
\end{proof}

\begin{example}
    This example shows that the converse of the previous theorem does not hold.  Consider $I(P_5)$.  By Proposition~\ref{prop:special:graphs}, $\L_{I(P_5)}$ is complemented.  It is easy to check that it is also strongly complemented; however, $\pd_S(S/I(P_5)) = 3$, while $\hgt(\L_{I(P_5)}) = 4$.  
\end{example}

\begin{example}
    This example shows that there are LCM lattices of monomial ideals that are complemented but not strongly complemented.
    Consider the edge ideal of the graph pictured in Figure~\ref{fig:comp:not:strongly}.
    \begin{figure}[ht]
    \begin{center}
        \begin{tikzpicture}
            \node (0) at (0, 0) [circle,draw, fill,inner sep=0pt,minimum size=3pt] { };
            \node (1) at (.5,.5) [circle,draw, fill,inner sep=0pt,minimum size=3pt] { };
            \node (2) at (1,0) [circle,draw, fill,inner sep=0pt,minimum size=3pt] { };
            \node (3) at (.5, -.5) [circle,draw, fill,inner sep=0pt,minimum size=3pt] { };
            \node (4) at (1.5, 0) [circle,draw, fill,inner sep=0pt,minimum size=3pt] { };
            \node (5) at (-.5, 0) [circle,draw, fill,inner sep=0pt,minimum size=3pt] {};

            \draw (0) to (1);
            \draw (1) to (2);
            \draw (2) to (3);
            \draw (3) to (0);
            \draw (2) to (4);
            \draw (1) to (3);
            \draw (0) to (5);

            \node at (0,0.35) {2};
            \node at (-.5,0.35) {1};
            \node at (1,0.35) {5};
            \node at (1.5,0.35) {6};
            \node at (.5,0.85) {3};
            \node at (.5,-0.85) {4};

        \end{tikzpicture}
    \end{center}
    \caption{A graph $G$ whose LCM lattice complemented but not strongly complemented.}\label{fig:comp:not:strongly}
    \end{figure}
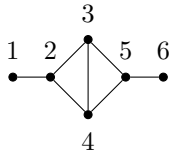
    It follows from Proposition~\ref{prop:complemented:LCM:lattice} that $\L_{I(G)}$ is complemented.  Coatoms correspond to induced subgraphs on $5$ of the $6$ vertices that don't have isolated vertices. Thus any element which is a meet of coatoms must include both $x_2$ and  $x_5$.  Any complement of $x_1x_2x_3x_4$ which is a meet of coatoms must be divisible by $x_1x_2, x_2x_3$ or $x_2x_4$ - a contradiction.  It follows that $\L_{I(G)}$ is not strongly complemented.  (In this case $\pd_S(S/I(G)) = 4$, while $\hgt(\L_{I(G)}) = 5$.)
\end{example}

\section{Characterizations of Lattice Properties of LCM Lattices of Edge Ideals}\label{section:edge:ideals}

It is likely intractable to classify the lattice properties of arbitrary monomial ideals in terms of the associated combinatorics of hypergraphs or Stanley-Reisner complexes.  Even checking whether a given monomial ideal is minimal is nontrivial.  However, for the case of squarefree monomial ideals generated in degree $2$, we provide a complete classification of many natural lattice properties.  The study of edge ideals with graded LCM lattices was previously done in work of Nevo and Peeva.  The remaining cases appear to be new.  We focus on each property in a separate subsection, grouping those that are equivalent.

First we note that it is sufficient to consider the case of connected graphs.  Indeed, if $G = G_1 \sqcup G_2$, then $\L_{I(G)} \cong \L_{I(G_1)} \times \L_{I(G_2)}$.  We are then reduced to the case of connected graphs by the following lemma, whose proof is an easy exercise.

\begin{lemma} Let $L_1,L_2$ be lattices and set $L = L_1 \times L_2$.  Then $L$ satisfies property (P) if and only if both of $L_1$ and $L_2$ do, where (P) is any of the following properties:
\begin{enumerate}
        \item Boolean
        \item distributive
        \item graded
        \item modular
        \item geometric
        \item upper semimodular
        \item lower semimodular
        \item atomic
        \item coatomic
        \item complemented
        \item supersolvable
    \end{enumerate}
\end{lemma}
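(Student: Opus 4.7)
The plan is to reduce every property to the componentwise structure of meets, joins, and covers in a product lattice, and then to verify each case by a short translation. First I would record the standard facts: the bounds of $L = L_1 \times L_2$ are $\hat{0}_L = (\hat{0}_{L_1}, \hat{0}_{L_2})$ and $\hat{1}_L = (\hat{1}_{L_1}, \hat{1}_{L_2})$; meets and joins are coordinatewise; and $(x_2, y_2)$ covers $(x_1, y_1)$ if and only if exactly one coordinate changes by a cover in the corresponding factor. As a consequence, $\operatorname{at}(L) = (\operatorname{at}(L_1) \times \{\hat{0}_{L_2}\}) \cup (\{\hat{0}_{L_1}\} \times \operatorname{at}(L_2))$, dually for coatoms, and the maps $\iota_1 \colon L_1 \hookrightarrow L$, $x \mapsto (x, \hat{0}_{L_2})$ and $\iota_2 \colon L_2 \hookrightarrow L$, $y \mapsto (\hat{0}_{L_1}, y)$ preserve joins, meets, $\hat{0}$, order, and cover relations. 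These embeddings are the key tool for the converse directions throughout.

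Next I would dispatch the order-theoretic properties. For \emph{atomic} and \emph{coatomic}, forward is immediate from the atom description, and the converse follows because any element of the form $(x, \hat{0}_{L_2})$, being a join of atoms of $L$, must be a join of atoms of the form $(a, \hat{0}_{L_2})$, giving atomicity of $L_1$. For \emph{distributive}, the identity $x \wedge (y \vee z) = (x \wedge y) \vee (x \wedge z)$ holds in $L$ if and only if it holds coordinatewise; the converse uses $\iota_i$. For \emph{complemented}, a componentwise complement works forward; conversely, if $(y, z)$ is a complement of $(x, \hat{0}_{L_2})$ in $L$, then meeting forces $x \wedge y = \hat{0}_{L_1}$ and joining forces $x \vee y = \hat{1}_{L_1}$, so $y$ complements $x$ in $L_1$. \emph{Boolean} is then the combination of distributive and complemented.

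For the ranked properties, if $L_i$ has rank function $\rho_i$, then $\rho(x,y) := \rho_1(x) + \rho_2(y)$ is a rank function on $L$; conversely, given a rank function $\rho$ on $L$, the pullbacks $\rho_i := \rho \circ \iota_i$ are rank functions on $L_i$ because $\iota_i$ sends covers to covers. Hence \emph{graded} is settled. Using rank additivity, both sides of $\rk((x_1,y_1)) + \rk((x_2,y_2))$ versus $\rk((x_1,y_1) \wedge (x_2,y_2)) + \rk((x_1,y_1) \vee (x_2,y_2))$ split as sums over the two factors, so \emph{upper semimodular}, \emph{lower semimodular}, and \emph{modular} each hold in $L$ iff they hold in both $L_i$, with the converses obtained by choosing the second coordinates of both test elements to be $\hat{0}_{L_2}$ (and symmetrically). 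Combined with the atomic statement, this also handles \emph{geometric}.

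The main obstacle, and last case, is \emph{supersolvable}. For the forward direction, if $M_i$ is a modular maximal chain in $L_i$, I would concatenate to form $M := (M_1 \times \{\hat{0}_{L_2}\}) \cup (\{\hat{1}_{L_1}\} \times M_2)$, which is a maximal chain of $L$ by the cover description, and check modularity of each element of $M$ against an arbitrary $(x,y) \in L$ by expanding both sides with rank additivity; only the $L_1$-side (respectively $L_2$-side) inequality for the relevant factor element survives, which holds by hypothesis. Conversely, given a modular chain $M$ in $L$, I would take $M_1 := \pi_1(M)$ with repetitions removed (this is a maximal chain in $L_1$ because projections of covers in $L$ are either equalities or covers in $L_1$). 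To see $M_1$ is modular in $L_1$, for $x \in L_1$ and $m \in M_1$ pick $(m, m_2) \in M$ and apply the modularity equation of $M$ to the pair $(x, \hat{0}_{L_2})$ and $(m, m_2)$; the $\rk_2$-contributions cancel symmetrically, leaving exactly the modular identity for $m$ in $L_1$. This concludes the proof.
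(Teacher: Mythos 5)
Your proof is correct, and since the paper explicitly leaves this lemma as ``an easy exercise'' with no written proof, your coordinatewise verification (meets, joins, and covers computed componentwise, the embeddings $\iota_i$ for the converse directions, rank additivity for the semimodularity conditions, and the concatenated chain for supersolvability) is precisely the argument the authors intend. All the individual cases check out, including the only genuinely delicate one (both directions of supersolvability), so there is nothing to add.
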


\subsection{Graded LCM Lattices}

In this section we prove the converse of a 
 result of Nevo and Peeva characterizing edge ideals with graded LCM lattices.  Importantly, this result makes computing the rank of a monomial in a graded LCM lattice easy in the subsequent sections.

\begin{thm}\label{GradedGraph}
    Let $G$ be a simple, connected, nontrivial graph. Then the following are equivalent,
    \begin{enumerate}
        \item The complement graph $G^c$ is $C_4$-free.  (Sometimes also called gap-free.)
        \item $\L_{I(G)}$ is graded
        \item $I(G)$ is linearly presented.
    \end{enumerate}
    Moreover, in this case, if $1 \neq m \in \L_{I(G)}$, then $\rank(m) = \deg(m) - 1$.
\end{thm}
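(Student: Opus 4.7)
The plan is to cite (1) $\Leftrightarrow$ (3) from Nevo and Peeva, to reprove (1) $\Rightarrow$ (2) so that the rank formula emerges as a byproduct, and to prove the new converse (2) $\Rightarrow$ (1). Throughout, I would identify a nonunit $m \in \L_{I(G)}$ with its support $S = \supp(m) \subseteq V(G)$, a nonempty vertex subset such that $G[S]$ has no isolated vertices (equivalently, a union of edges of $G$), so that $\deg(m) = |S|$.

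For (1) $\Rightarrow$ (2) and the rank formula, I would set $\rho(1) = 0$ and $\rho(m) = |S|-1$ for $m \ne 1$ and verify that $\rho(m') = \rho(m)+1$ whenever $m \lessdot m'$ in $\L_{I(G)}$. The case $m = 1$ is immediate since atoms correspond to edges. For $m \ne 1$, it reduces to showing $|S' \setminus S| = 1$. Suppose instead $|S' \setminus S| \ge 2$. If some $a \in S' \setminus S$ has a neighbor in $S$, then $S \cup \{a\}$ is a strict intermediate in $\L_{I(G)}$, contradicting the cover. Otherwise every vertex of $S' \setminus S$ has its $S'$-neighbors inside $S' \setminus S$, so $G[S' \setminus S]$ has no isolated vertices; if $|S' \setminus S| \ge 3$, then any edge $\{a, a'\}$ of $G[S' \setminus S]$ makes $S \cup \{a, a'\}$ a valid intermediate. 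The remaining case $S' \setminus S = \{a, b\}$ with $a \sim b$ and both having no neighbors in $S$ is where gap-freeness is consumed: choosing any edge $\{c, d\}$ inside $S$ (possible since $S$ is a union of edges) exhibits $\{a,b,c,d\}$ as an induced $2K_2$ in $G$, a contradiction.

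For (2) $\Rightarrow$ (1), I would argue the contrapositive. Given an induced $2K_2$ on $\{a,b,c,d\}$ with edges $\{a,b\}$ and $\{c,d\}$, connectedness of $G$ provides a shortest $G$-path between $\{a,b\}$ and $\{c,d\}$, which after relabeling endpoints I can write as $a = v_0, v_1, \ldots, v_k = c$ with $k \ge 2$. Minimality of the path forces $v_1, \ldots, v_{k-1} \notin \{a,b,c,d\}$. Setting $M = \{a, b, c, d, v_1, \ldots, v_{k-1}\} \in \L_{I(G)}$, I would exhibit two saturated chains from $\hat{0}$ to $M$: a short chain $\hat{0} \lessdot \{a,b\} \lessdot \{a,b,c,d\} \lessdot \{a,b,c,d,v_{k-1}\} \lessdot \cdots \lessdot M$ of length $k+1$, and a long chain $\hat{0} \lessdot \{a,b\} \lessdot \{a,b,v_1\} \lessdot \cdots \lessdot \{a,b,v_1,\ldots,v_k\} \lessdot M$ of length $k+2$. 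The crucial step $\{a,b\} \lessdot \{a,b,c,d\}$ is a cover because the induced $2K_2$ hypothesis forces any would-be intermediate $T$ to contain $c$ or $d$ as an isolated vertex of $G[T]$; every other step in both chains adds a single vertex adjacent to the current set and is automatically a cover. Existence of two saturated chains of different lengths between the same pair of elements contradicts gradedness.

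The main obstacle is the combinatorial case $|S' \setminus S| = 2$ in the forward direction, which is precisely where gap-freeness is invoked: neither a single-vertex extension of $S$ nor an edge-extension into $S' \setminus S$ produces an intermediate lattice element, so the only remaining block is to force the induced $2K_2$. The converse construction hinges on careful choice of shortest-path endpoints ensuring the interior vertices lie outside $\{a,b,c,d\}$, so that the short chain's jump $\{a,b\} \lessdot \{a,b,c,d\}$ is a genuine cover and the long chain is genuinely saturated.
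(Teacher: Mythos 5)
Your proposal is correct. The overall architecture matches the paper's: both cite Nevo--Peeva for $(1)\Leftrightarrow(3)$, and both prove $(2)\Rightarrow(1)$ by exhibiting two saturated chains of different lengths between a common pair of lattice elements, with the key cover $x_ax_b \lessdot x_ax_bx_cx_d$ justified by the fact that $x_ax_bx_c$ and $x_ax_bx_d$ are not unions of edges. The two differences are worth noting. First, for $(1)\Rightarrow(2)$ and the rank formula the paper simply cites \cite[Theorem 1.5]{NP13}, whereas you reprove it from scratch by showing that every cover $m\lessdot m'$ increases $\deg$ by exactly $1$; your case analysis on $|S'\setminus S|$ is complete and correctly isolates the gap $\{a,b,c,d\}$ as the only obstruction, so this buys a self-contained argument at the cost of a page of combinatorics. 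Second, in the converse the paper uses connectedness to extend both chains all the way to $\hat 1 = x_{v_1}\cdots x_{v_n}$ and compares two maximal chains of lengths $n-1$ and $n-2$, while you stop at the element $M$ supported on the gap plus a shortest connecting path and compare chains of lengths $k+1$ and $k+2$; these are the same idea executed with different endpoints, and your version is arguably cleaner in that it makes explicit why the interior path vertices avoid $\{a,b,c,d\}$ and why each single-vertex step is a cover, details the paper leaves implicit.
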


\begin{proof}
    That statements $\mathit{(1)}$ and $\mathit{(3)}$ are equivalent is proved in \cite[Proposition 1.3]{NP13}, where the authors attribute the result to a personal communication with Francisco, H\`a, and Van Tuyl and cite \cite[Theorem 3.2.4]{HVT07}.  That $\mathit{(1)}$ implies $\mathit{(2)}$ and the rank calculation is proved in \cite[Theorem 1.5]{NP13}.  

     For the converse, suppose $\L_{I(G)}$ is graded.  Since $G$ is connected, we can pick $v_1 \in V$ arbitrarily and then for $i > 1$ label $v_i$ so that it is adjacent to a vertex in $\{v_1,\ldots,v_{i-1}\}$.  Then $\L_{I(G)}$ contains a necessarily maximal chain of the form:
     \[1 < x_{v_1}x_{v_2} < x_{v_1}x_{v_2}x_{v_3} < \cdots < x_{v_1}\cdots x_{v_n},\]
     which has length $n-1$.  Thus $\rk(\L_{I(G)}) = n-1$.  Suppose $G$ is not gap-free, so that the induced subgraph on a vertex set $\{a,b,c,d\}$ is a pair of disconnected edges $\{a,b\}$ and $\{c,d\}$.  Once again, we pick vertices $w_i \in V$ for $5 \le i \le n$ so that $w_i$ is adjacent to a vertex in $\{a,b,c,d\} \cup \{w_5,\ldots,w_{i-1}\}$.  Then
     \[1 < x_ax_b < x_ax_bx_cx_d < x_ax_bx_cx_dx_{w_1} < \cdots < x_ax_bx_cx_dx_{w_1}x_{w_2}\cdots x_{w_{n-4}}\]
     is a saturated chain of length $n-2$, contradicting that $\L_{I(G)}$ was graded.  It follows that $G^c$ is $C_4$-free.
\end{proof}

\noindent It is notable that being linearly presented is a lattice property and does not depend on the characteristic of the coefficient field.

\subsection{Modular and Boolean LCM Lattices}

It turns out that very few edge ideals have Modular or Boolean lattices.

\begin{thm}\label{USS}
Let $G$ be a simple, connected, nontrivial graph.  The following are equivalent:
\begin{enumerate}
    \item  $\L_{I(G)}$ is modular
    \item $\L_{I(G)}$ is geometric
    \item $\L_{I(G)}$ is upper semimodular
    \item $G$ has no disjoint edges
    \item  $G$ is isomorphic to $C_3$ or $St_n$ for some $n \ge 2$.
\end{enumerate}
\end{thm}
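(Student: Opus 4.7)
The plan is to close the cycle of implications $(5) \Rightarrow (1) \Rightarrow (2) \Rightarrow (3) \Rightarrow (4) \Rightarrow (5)$. The top three implications are mostly formal from the lattice hierarchy: modular implies upper semimodular, and since $\L_{I(G)}$ is always atomic (as an LCM lattice), upper semimodular plus atomic is precisely geometric; this handles $(1)\Rightarrow (2)$ and $(2)\Rightarrow (3)$ directly from the definitions reviewed in Section~\ref{sec:background} and Figure~\ref{fig:lattice:implications}.

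For $(5) \Rightarrow (1)$, I would compute $\L_{I(G)}$ explicitly in the two cases. For $G = C_3$, the ideal is $(x_1x_2, x_1x_3, x_2x_3)$, and the lattice has exactly three atoms, all joining to $x_1x_2x_3$; this is the standard lattice $M_3$, which is modular. For $G = \operatorname{St}_n$, the generators are $x_1x_i$ for $i \ge 2$, and the join of any subset indexed by $S \subseteq \{2,\ldots,n\}$ is $x_1 \prod_{i \in S} x_i$; the map $S \mapsto x_1\prod_{i \in S}x_i$ (with $\emptyset \mapsto 1$) is a poset isomorphism onto the Boolean lattice $2^{[n-1]}$, which is modular.

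The main work is in $(3)\Rightarrow (4)$. Suppose $\L_{I(G)}$ is upper semimodular, so in particular graded, and suppose for contradiction $G$ contains disjoint edges $\{a,b\}$ and $\{c,d\}$. Set $x = x_ax_b$ and $y = x_cx_d$, which are atoms of $\L_{I(G)}$, and consider $x \vee y = x_ax_bx_cx_d$ and $x \wedge y$. Because $\{a,b\}\cap\{c,d\} = \emptyset$, no monomial generator of $I(G)$ divides both $x$ and $y$ simultaneously, so $x \wedge y = 1 = \hat{0}$. Using Theorem~\ref{GradedGraph}, ranks equal degree minus one, so $\rk(x)+\rk(y) = 2$ while $\rk(x\wedge y)+\rk(x\vee y) = 0 + 3 = 3$. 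This violates upper semimodularity, giving the contradiction.

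For $(4)\Rightarrow(5)$, I would argue by a short graph-theoretic case analysis: assume $G$ is connected, nontrivial, and any two edges share a vertex. If some three edges form a triangle on vertices $\{a,b,c\}$, then any additional vertex $v$ adjacent to, say, $a$ would produce the disjoint pair $\{v,a\}$ and $\{b,c\}$; by connectedness the vertex set must then equal $\{a,b,c\}$, giving $G = C_3$. Otherwise, fix any two edges meeting at a vertex $w$; since no triangle exists and every edge must share a vertex with each of these two edges, every edge contains $w$, so $G = \operatorname{St}_n$. The direction $(5)\Rightarrow(4)$ is immediate by inspection. The main obstacle throughout is simply organizing the rank computation in $(3)\Rightarrow(4)$ correctly, which is why invoking the rank formula from Theorem~\ref{GradedGraph} is the essential reduction.
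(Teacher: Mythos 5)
Your proposal is correct and follows essentially the same route as the paper: the formal lattice-hierarchy implications, the rank computation via Theorem~\ref{GradedGraph} showing a pair of disjoint edges violates upper semimodularity (with $x_ax_b \wedge x_cx_d = \hat{0}$ and $\rk(x_ax_bx_cx_d)=3$), the short case analysis showing a connected graph with pairwise intersecting edges is $C_3$ or a star, and modularity of the resulting lattices (the paper cites its Boolean characterization for $\operatorname{St}_n$ where you compute the Boolean lattice directly, a cosmetic difference). No gaps.
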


\begin{proof}
    Since $\L_{I(G)}$ is always atomic, it is clear that \textit{(1)} $\Rightarrow$ \textit{(2)} $\Leftrightarrow$ \textit{(3)}.\\

   \noindent \underline{\textit{(3)} $\Rightarrow$ \textit{(4)}}: Suppose $\L_{I(G)}$ is upper semimodular.  In particular, it is graded, so by Theorem~\ref{GradedGraph}, for any $m \in \L_{I(G)}$, $\rk(m) = \deg(m) -1$.  Toward a contradiction, suppose there are vertices $a,b,c,d \in V$ such that $\{a,b\}, \{c,d\} \in E(G)$.  Setting $m_1 = x_ax_b$ and $m_2 = x_cx_d$, we have $m_1,m_2 \in \L_{I(G)}$ and yet
    \[\rk(m_1) + \rk(m_2) = \deg(m_1)-1 + \deg(m_2) -1 = 2 < 3 = \deg(m_1m_2) - 1 + \deg(1) = \rk(m_1 \wedge m_2) + \rk(m_1 \vee m_2),\]
    contradicting the upper semimodularity of $\L_{I(G)}$.\\
    
    \noindent \underline{\textit{(4)} $\Rightarrow$ \textit{(5)}}: If $G$ has no pair of disjoint edges, then either all edges are adjacent to a single vertex, in which case $G$ is a star graph, or else $G$ must be a 3-cycle.\\
    
    \noindent \underline{\textit{(5)} $\Rightarrow$ \textit{(1)}}: If $G = St_n$, then each monomial generator of $I(G)$ is divisible by a unique variable.  By Theorem~\ref{Boolean:LCM:lattice}, $\L_{I(G)}$ is Boolean, and hence modular.  If $G = C_3$, then $\rk(\L_{I(G)}) = 2$ and again $\L_{I(G)}$ is modular.
\end{proof}

The following characterization of edge ideals with Boolean LCM lattices is then an easy corollary.

\begin{cor}\label{BooleanEdgeIdeal}
    Let $G$ be a simple, connected, nontrivial graph. The following are equivalent
\begin{enumerate}
        \item $\L_{I(G)}$ is Boolean.
        \item $\L_{I(G)}$ is distributive.
        \item $G$ is a star graph.
    \end{enumerate}  
\end{cor}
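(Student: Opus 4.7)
The plan is to derive the corollary as a fairly quick consequence of Theorem~\ref{USS} together with the general lattice facts recalled in Section~\ref{sec:background}. The implication \textit{(1)} $\Rightarrow$ \textit{(2)} is immediate from the definition of Boolean (or from Figure~\ref{fig:lattice:implications}). The implication \textit{(2)} $\Rightarrow$ \textit{(1)} follows from the standard fact, cited in Section~\ref{sec:background}, that any finite atomic distributive lattice is Boolean; since $\L_{I(G)}$ is always atomic, this handles the reverse direction with no extra work.

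For \textit{(3)} $\Rightarrow$ \textit{(1)}, I would reuse the argument already given in the proof of Theorem~\ref{USS}: if $G = \operatorname{St}_n$, then every minimal generator of $I(G)$ contains the central vertex's variable times a unique leaf variable, so each generator is divisible by a variable that divides no other generator. By Theorem~\ref{Boolean:LCM:lattice}, condition \textit{(2)} of that theorem is satisfied, hence $\L_{I(G)}$ is Boolean.

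For the main direction \textit{(1)} $\Rightarrow$ \textit{(3)}, the plan is to observe that Boolean implies modular (Figure~\ref{fig:lattice:implications}), so Theorem~\ref{USS} applies and $G$ is either $\operatorname{St}_n$ or $C_3$. The only thing to check is that $C_3$ does not yield a Boolean LCM lattice. A direct computation suffices: $\L_{I(C_3)}$ has bottom $1$, three atoms $x_1x_2, x_1x_3, x_2x_3$, and top $x_1x_2x_3$, a rank-$2$ lattice with three atoms. A Boolean lattice of rank $2$ is $2^{\{1,2\}}$ with exactly two atoms, so $\L_{I(C_3)}$ cannot be Boolean. Thus $G$ must be a star graph.

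The main obstacle is really only the $C_3$ check in the last step, and it is a one-line atom count, so I expect no real difficulty; the bulk of the work was already done in Theorems~\ref{Boolean:LCM:lattice} and~\ref{USS}.
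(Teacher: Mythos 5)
Your proposal is correct and follows essentially the same route as the paper: reduce to Theorem~\ref{USS} via the fact that a finite atomic distributive lattice is Boolean, and then rule out $C_3$. The only difference is that you spell out the one-line atom count showing $\L_{I(C_3)}$ is not Boolean, which the paper simply asserts.
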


\begin{proof} Since $\L_{I(G)}$ is atomic, $\L_{I(G)}$ is Boolean if and only if it is distributive by \cite[Proposition 3.4.5]{Stanley11}.  Since $\L_{I(C_3)}$ is not Boolean, the rest follows from Theorem~\ref{USS}.
\end{proof}

\subsection{Supersolvable LCM Lattices}

While it is quite restrictive for an edge ideal to have a modular LCM lattice, there are quit a few more that are merely supersolvable.

\begin{thm}\label{thm:supersolvable} Let $G$ be a simple, connected, nontrivial graph.  Then $\L_{I(G)}$ is supersolvable if and only if $G$ has an edge adjacent to every other edge.
\end{thm}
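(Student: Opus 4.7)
The plan is to leverage Theorem~\ref{GradedGraph} as the backbone: supersolvable lattices are graded, so in either direction we may use $\rk(m) = \deg(m) - 1$ for every $m \neq 1$ in $\L_{I(G)}$, with $G$ simultaneously being gap-free.

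For the forward implication, let $1 = c_0 < c_1 < \cdots < c_r$ be a modular maximal chain in $\L_{I(G)}$. Since $\rk(c_1) = 1$, we must have $c_1 = x_u x_v$ for some edge $e = \{u, v\} \in E(G)$. I will show that modularity of $c_1$ forces $e$ to be adjacent to every other edge. If some edge $\{a, b\}$ were disjoint from $e$, then with $x := x_a x_b \in \L_{I(G)}$ we would have $c_1 \wedge x = 1$ while $c_1 \vee x = x_u x_v x_a x_b$ has rank $3$, yielding $\rk(c_1) + \rk(x) = 2 \neq 3 = \rk(c_1 \wedge x) + \rk(c_1 \vee x)$, contradicting the modularity of $c_1$.

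For the backward implication, suppose $e = \{u, v\} \in E(G)$ is adjacent to every other edge (WLOG $G$ has no isolated vertex). A short check confirms that $G$ is gap-free: two disjoint edges $\{a, b\}, \{c, d\}$ on four distinct vertices would each have to meet $e$, and disjointness forces one to contain $u$ and the other $v$, so that $\{u, v\} = e$ itself connects them in $G$. Hence $\L_{I(G)}$ is graded by Theorem~\ref{GradedGraph}. Fix any ordering $w_1, \ldots, w_{n-2}$ of $V \setminus \{u, v\}$ and define $c_k := x_u x_v x_{w_1} \cdots x_{w_{k-1}}$; each $w_i$ is adjacent to $u$ or $v$, so every $c_k$ lies in $\L_{I(G)}$, giving a maximal chain of length $n - 1$. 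To verify that each $c_k$ is modular, write $M := \supp(c_k) \supseteq \{u, v\}$ and $S := \supp(x)$ for an arbitrary $x \in \L_{I(G)}$. Using $\rk = \deg - 1$, the modular identity reduces to the claim that every vertex of $M \cap S$ lies in an edge of $G[M \cap S]$, once one observes that $|M \cap S| \le 1$ is handled trivially and that $|M \cap S| \ge 1$ holds automatically whenever $x \ne 1$ (since the hypothesis on $e$ forces $S \cap \{u, v\} \ne \varnothing$). One then splits on which of $u, v$ belongs to $S$: if both do, then $\{u, v\}$ covers itself and each $w_i \in M \cap S$ is adjacent to $u$ or $v$ inside $M \cap S$; if only $u \in S$, then every edge of $G$ with support inside $S$ must contain $u$, so every other vertex of $S$ is a neighbor of $u$, and in particular each element of $(M \cap S) \setminus \{u\}$ is adjacent to $u \in M \cap S$. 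The case $v \in S$, $u \notin S$ is symmetric.

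The main obstacle is this last case analysis, since the edge-covering property of $M \cap S$ is not apparent a priori. However, the hypothesis that $e$ is adjacent to every other edge is precisely what forces every non-identity element of $\L_{I(G)}$ to involve $u$ or $v$, which in turn supplies the needed adjacencies and makes every $c_k$ modular, completing the construction of an M-chain.
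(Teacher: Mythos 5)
Your proof is correct and follows essentially the same route as the paper: both directions hinge on the rank formula $\rk(m)=\deg(m)-1$ from Theorem~\ref{GradedGraph}, the forward direction uses the same rank-count $2\neq 0+3$ on the atom $c_1$ versus a disjoint edge, and the backward direction builds the identical chain $x_ux_v < x_ux_vx_{w_1} < \cdots$ and verifies modularity by the same case split on whether the other element's support contains both of $u,v$ or just one. Your packaging of the meet computation as ``$G[M\cap S]$ has no isolated vertices'' is a slightly cleaner way to organize the same verification the paper carries out explicitly.
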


\begin{proof}
    Suppose no edge of $G$ is adjacent to every other edge. First, since supersolvable lattices are graded, we may assume that $\L_{I(G)}$ is graded. Take a maximal chain $C$ in $\L_{I(G)}$.  The first nontrivial element in the chain correspond to an edge $e = \{a,b\} \in E(G)$.  By assumption, there is another edge $\{c,d\} \in E(G)$ which is not adjacent to $e$.  Since $\L_{I(G)}$ is graded, $\rk(x_ax_b \vee x_cx_d) = \rk(x_ax_bx_cx_d) = \deg(x_ax_bx_cx_d) - 1 = 3$; yet $\rk(x_ax_b) + \rk(x_cx_d) = 2$, while $\rk(x_ax_b \wedge x_cx_d) + \rk(x_ax_b \vee x_cx_d) = 3 + 0$.  Thus no chain is modular and hence $\L_{I(G)}$ is not supersolvable.
   
    Now, suppose there is an edge $e = \{a,b\}$ adjacent to all other edges in $G$.  Relabel the vertices as follows: $V(G) \setminus e := \{1,\dotsc,m\}$.
    Then, consider the maximal chain 
    \begin{align*}
        M = \{1 \leq x_e \leq x_ex_1 \leq x_ex_1x_2 = x_ex_{[2]} \dotsb \leq x_ex_{[i]} \leq \dotsb \leq x_{e}x_{[m]}\}.
    \end{align*}
    We claim this maximal chain satisfies the condition required for supersolvability.
    Note that every edge in $G$ has $a$ or $b$ as an endpoint. Hence, $x_a$ or $x_b$ divides every atom of $\L_{I(G)}$.
    Then, every non-bottom element in $\L_{I(G)}$ is of the form $x_ax_S$ with $S \subset N(a)$, $x_bx_T$ with $T \subset N(b)$, or $x_ax_bx_R$ with $R \subset V(G) \setminus e$.
    By symmetry, we need only handle the first and third cases.
    
    Let $S \subset N(a)$ and $R \subset V(G) \setminus e$.
    We have:
    \begin{align*}
        x_ex_{[i]} \vee x_e x_R &= x_e x_{R \cup [i]},\\
        x_ex_{[i]} \wedge x_e x_R &= x_e x_{R\cap [i]},\\
        x_ex_{[i]} \vee x_a x_S &= x_e x_{S \cup [i]},\\
        x_ex_{[i]} \wedge x_a x_S &= \begin{cases} x_e x_{S\cap [i]} & \text{ if } S \cap [i] \neq \varnothing\\
        1 &\text{ otherwise.} \end{cases}\\
    \end{align*}
    Hence,
    \begin{align*}
        \operatorname{rk}(x_e x_{R \cup [i]}) + \operatorname{rk}(x_e x_{R\cap [i]}) &= (|R \cup [i]| + 2) - 1 + (|R \cap [i]| + 2) - 1 \\
        &= |R \cup [i]| + |R \cap [i]| + 2\\
        &= |R| + |[i]| + 2\\
        &= (|R| + 2 - 1) + (i + 2 -1) = \operatorname{rk}(x_e x_R) + \operatorname{rk}(x_e x_{[i]}).
    \end{align*}
    If $S \cap [i] \neq \varnothing$, then
    \begin{align*}
        \operatorname{rk}(x_e x_{S \cup [i]}) + \operatorname{rk}(x_a x_{S\cap [i]}) &= (|S \cup [i]| + 2) - 1 + (|S \cap [i]| + 1) - 1 \\
        &= |S \cup [i]| + |S \cap [i]| + 1\\
        &= |S| + |[i]| + 1\\
        &= (|S| + 1 - 1) + (i + 2 - 1) = \operatorname{rk}(x_a x_S) + \operatorname{rk}(x_e x_{[i]}).
    \end{align*}    
    If $S \cap [i] = \varnothing$, then
    \begin{align*}
        \operatorname{rk}(x_e x_{S \cup [i]}) + \operatorname{rk}(1) &= (|S \cup [i]| + 2) - 1 + 0 \\
        &= |S| + |[i]| + 1\\
        &= (|S| + 1 - 1) + (i + 2 - 1) = \operatorname{rk}(x_a x_S) + \operatorname{rk}(x_e x_{[i]}).
    \end{align*}
    It follows that $M$ is a modular chain and $\L_{I(G)}$ is supersolvable.
\end{proof}

By Theorem~\ref{thm:supersolvable}, graphs with supersolvable LCM lattice look like those pictured in Figure~\ref{figure:ss}.

    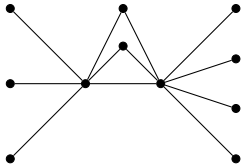
\begin{figure}[ht]
    \begin{tikzpicture}
        \node (a) at (-0.5, 0) [circle,draw, fill,inner sep=0pt,minimum size=3pt] {};
        \node (b) at (0.5, 0) [circle,draw, fill,inner sep=0pt,minimum size=3pt] {};
        \node (c) at (1.5, 1) [circle,draw, fill,inner sep=0pt,minimum size=3pt] {};
        \node (d) at (1.5, 0.33) [circle,draw, fill,inner sep=0pt,minimum size=3pt] {};
        \node (e) at (1.5, -0.33) [circle,draw, fill,inner sep=0pt,minimum size=3pt] {};
        \node (f) at (1.5, -1) [circle,draw, fill,inner sep=0pt,minimum size=3pt] {};
        \node (g) at (-1.5, 1) [circle,draw, fill,inner sep=0pt,minimum size=3pt] {};
        \node (h) at (-1.5, 0) [circle,draw, fill,inner sep=0pt,minimum size=3pt] {};
        \node (i) at (-1.5, -1) [circle,draw, fill,inner sep=0pt,minimum size=3pt] {};
        \node (j) at (0, 1) [circle,draw, fill,inner sep=0pt,minimum size=3pt] {};
        \node (k) at (0, 0.5) [circle,draw, fill,inner sep=0pt,minimum size=3pt] {};
        \draw (a) -- (b);
        \draw (b) -- (c);
        \draw (b) -- (d);
        \draw (b) -- (e);
        \draw (b) -- (f);
        \draw (a) -- (g);
        \draw (a) -- (h);
        \draw (a) -- (i);
        \draw (a) -- (j);
        \draw (a) -- (k);
        \draw (b) -- (j);
        \draw (b) -- (k);
    \end{tikzpicture}
    \caption{A typical graph with supersolvable LCM lattice}\label{figure:ss}
    \end{figure}

\subsection{Lower Semimodular LCM Lattices}

Graphs with lower semimodular LCM lattices can be characterized by forbidden induced subgraphs, which leads to a simple classification of such graphs.  Note that the diamond graph is the graph on $\{1,2,3,4\}$ which has all possible edges but one.

\begin{thm}\label{thm:lsm} Let $G$ be a simple, connected, nontrivial graph.  Then the following are equivalent:
\begin{enumerate}
    \item $G$ has a clique $H$ such that every vertex of $G$ not in $H$ is adjacent to a unique vertex of $H$.
    \item $G$ is gap-free, $C_4$-free, and diamond-free.
    \item $\L_{I(G)}$ is lower semimodular.
\end{enumerate}
\end{thm}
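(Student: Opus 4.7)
The plan is to prove the implications $(1)\Rightarrow(2)$, $(3)\Rightarrow(2)$, $(2)\Rightarrow(1)$, and $(1)\Rightarrow(3)$; together these yield the full equivalence. The structural description in $(1)$---a clique $H$ together with pendant vertices each attached to a unique vertex of $H$---provides a concrete model from which both the forbidden-subgraph list in $(2)$ and the lower semimodularity in $(3)$ can be read off, while $(2)$ and $(3)$ each carry enough information to recover that structure. Throughout I use Theorem~\ref{GradedGraph} to identify the rank of $m\in\L_{I(G)}$ with $\deg(m)-1$ whenever $\L_{I(G)}$ is graded, together with the dictionary identifying an element of $\L_{I(G)}$ with a subset $S\subseteq V(G)$ such that $G[S]$ has no isolated vertex.

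For $(1)\Rightarrow(2)$, assume $G$ decomposes as a clique $H$ plus pendants. Pendants have degree one, so any two disjoint edges have at least two endpoints in $H$ joined by a clique edge, giving gap-freeness. An induced $C_4$ or diamond would force every vertex to have degree at least two in the induced subgraph, hence lie entirely in $H$; but a four-element subset of a clique induces $K_4$, not $C_4$ or the diamond. For $(3)\Rightarrow(2)$, lower semimodularity forces $\L_{I(G)}$ to be graded and hence $G$ to be gap-free. If $G$ contained an induced $C_4$ on $\{a,b,c,d\}$ with edges $\{a,b\},\{b,c\},\{c,d\},\{d,a\}$, then $m_1=x_ax_bx_c$ and $m_2=x_ax_cx_d$ are rank-$2$ elements of $\L_{I(G)}$ with $\rk(m_1\vee m_2)=3$, but $m_1\wedge m_2=1$ since $\{a,c\}$ is a non-edge and hence $x_ax_c\notin\L_{I(G)}$; the LSM inequality $4\le 3$ fails. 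A parallel computation with $m_1=x_ax_cx_d$ and $m_2=x_bx_cx_d$ rules out an induced diamond missing the edge $\{c,d\}$.

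The main step is $(2)\Rightarrow(1)$: choose $H$ to be a maximum clique of $G$ and show that $V(G)\setminus H$ consists entirely of pendants attached to $H$. Step 1: each $v\notin H$ has at most one $H$-neighbor; otherwise two $H$-neighbors of $v$ together with any non-neighbor in $H$ induce a diamond (or, when $|H|=2$, contradict the maximality of $H$). Step 2: each $v\notin H$ has at least one $H$-neighbor; otherwise a shortest path $v=v_0,v_1,\ldots,v_d$ to $H$ with $d\ge 2$, together with any pair $h',h''\in H\setminus\{v_d\}$ (when $|H|\ge 3$), produces an induced $2K_2$ on $\{v_{d-2},v_{d-1},h',h''\}$, contradicting gap-freeness. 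Step 3: no edge joins two non-$H$ vertices; adjacent $v,w\notin H$ with distinct $H$-neighbors give an induced $C_4$, while a common $H$-neighbor forces a triangle that contradicts maximality (if $|H|=2$) or yields a $2K_2$ with a clique edge disjoint from $\{v,w\}$ (if $|H|\ge 3$).

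Finally, $(1)\Rightarrow(3)$ is a direct rank calculation: for $S,T\in\L_{I(G)}$ in a clique-plus-pendants graph, an isolated vertex of $G[S\cap T]$ must be an $H$-vertex forcing $S\cap T$ to be a single clique vertex, at which point the LSM inequality degenerates to $1\le 0+1$, while in all other cases $S\cap T$ itself is in $\L_{I(G)}$ and LSM holds with equality. The principal obstacle is the triangle-free case $|H|=2$ in Steps 2 and 3 of $(2)\Rightarrow(1)$, where the clique-based gap argument is unavailable and the path analysis becomes delicate; in that regime one must trace induced paths carefully and show that the only short cyclic configuration compatible with gap-freeness and $C_4$-freeness again reduces to a middle-edge clique with pendants, completing the pendant structure.
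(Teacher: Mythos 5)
Your architecture---proving $(1)\Rightarrow(2)$, $(3)\Rightarrow(2)$, $(2)\Rightarrow(1)$, and $(1)\Rightarrow(3)$---and the individual arguments for $(1)\Rightarrow(2)$, $(3)\Rightarrow(2)$, $(1)\Rightarrow(3)$, and the $|H|\ge 3$ portion of $(2)\Rightarrow(1)$ are sound and run closely parallel to the paper's proof: the same rank formula $\rk(m)=\deg(m)-1$ from Theorem~\ref{GradedGraph}, essentially the same pair of rank-$2$ elements meeting across the missing diagonal to defeat lower semimodularity, and the same forbidden-subgraph case analysis (the paper organizes $(2)\Rightarrow(1)$ around pairs of maximal cliques of size at least $3$ rather than a single maximum clique, but the content is the same).

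The genuine gap is the triangle-free case $|H|=2$ of $(2)\Rightarrow(1)$, which you defer with the remark that one ``must trace induced paths carefully.'' That case is not a routine verification, and it cannot be completed in the way you describe. The cycle $C_5$ is connected, triangle-free (hence diamond-free) and $C_4$-free, and it is gap-free because every $4$-subset of its vertices induces a $P_4$, so no four vertices induce exactly two disjoint edges; thus $C_5$ satisfies $(2)$. But no clique of $C_5$ (a vertex or a single edge) is seen by every outside vertex, so $(1)$ fails; and lower semimodularity fails too: with the standard labeling, $m_1=x_1x_2x_3x_4=\lcm(x_1x_2,x_2x_3,x_3x_4)$ and $m_2=x_1x_4x_5=\lcm(x_4x_5,x_5x_1)$ lie in $\L_{I(C_5)}$, their join is $x_1x_2x_3x_4x_5$, and their meet is $1$ because $\{1,4\}$ is a non-edge, so $\rk(m_1)+\rk(m_2)=3+2=5>0+4=\rk(m_1\wedge m_2)+\rk(m_1\vee m_2)$. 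So the case you postponed is exactly where the implications $(2)\Rightarrow(1)$ and $(2)\Rightarrow(3)$ break down; condition $(2)$ would have to exclude induced $5$-cycles as well. For what it is worth, the paper's own proof has the same soft spot---it asserts that a graph satisfying $(2)$ with no clique of size $3$ ``must be a tree of diameter at most $3$,'' which $C_5$ contradicts---so your instinct that this is the principal obstacle was correct, but gesturing at a ``delicate path analysis'' does not close the proof, and no analysis of the kind you describe can.
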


\begin{proof}
    \noindent \underline{\textit{(1)} $\Rightarrow$ \textit{(3)}}: Let $G$ be as in \textit{(1)}.   Note that by Theorem~\ref{GradedGraph}, $\L_{I(G)}$ is graded and if $m \in \L_{I(G)}$, then $\rk(m) = \deg(m) - 1$.  Recall that elements of $\L_{I(G)}$ correspond to induced subgraphs with no isolated vertices.  Let $m,m' \in \L_{I(G)}$ with corresponding induced subgraphs $H,H'$.  If $|H \cap H'| \le 1$, then $m \wedge m' = 1$, while $m \vee m' = \lcm(m,m')$ has degree at least $\deg(m) + \deg(m')-1$.  Thus, $\rk(m) + \rk(m') = \deg(m) - 1 + \deg(m') - 1 \le \rk(m \wedge m') + \rk(m \vee m')$.  If $|H \cap H'| \ge 2$, then the structure of $G$ implies that $m \wedge m' = \gcd(m,m')$.  In particular, $\rk(m) + \rk(m') = \rk(m \wedge m') + \rk(m \vee m')$.  It follows that $\L_{I(G)}$ is lower semimodular.\\

     \noindent \underline{\textit{(3)} $\Rightarrow$ \textit{(2)}}: If $H$ is an induced subgraph of $G$, then $\L_{I(H)}$ is a convex sublattice of $\L_{I(G)}$.  Thus, if $\L_{I(H)}$ is not lower semimodular, neither is $\L_{I(G)}$.  Since lower semimodular lattices are in particular graded, $G$ must be gap-free by Theorem~\ref{GradedGraph}.  

     Let $H$ be either the 4-cycle or the diamond graph so that $V(H) = \{1,2,3,4\}$ and $\{1,2\},\{2,3\},\{3,4\},\{1,4\} \in E(H)$, while $\{1,3\} \notin E(H)$.  Set $m = x_1x_2x_3$ and $m' = x_1x_3x_4$.  Then $\rk(m) = \rk(m') = 2$, $\rk(m \vee m') = \rk(x_1x_2x_3x_4) = 3$, and $\rk(m \wedge m') = \rk(1) = 0$.  Thus, $\rk(m) + \rk(m') = 4 > 0 + 3 = \rk(m \wedge m') + \rk(m \vee m')$, and so $\L_{I(H)}$ is not lower semimodular.\\

      \noindent \underline{\textit{(2)} $\Rightarrow$ \textit{(1)}}: Suppose $G$ is a graph and let $H,H'$ be distinct maximal cliques of size at least $3$.  If $|H \cap H'| \ge 2$, let $u,v \in H \cap H'$.  Since $H,H'$ are maximal and distinct, there are vertices $w \in V(H)$ and $w' \in V(H')$ that are not adjacent.  The induced subgraph on vertices $\{u,v,w,w'\}$ is either a 4-cycle or a diamond, contradicting our assumption.

      If $|H \cap H'| = 1$, write $u = V(H \cap H')$.  Since $H,H'$ have size at least $3$ and are maximal cliques, there are at least two vertices, $v_1,v_2 \in V(H)$ which are each not adjacent to at least one vertex of $H'$.  Suppose we can find $w_1 \in V(H')$ not adjacent to both $v_1,v_2$.  By the same argument there is another vertex $w_2 \in V(H')$ not adjacent to at least one vertex of $H$.  If $w_2$ is not adjacent to both $v_1$ and $v_2$, then the induced subgraph on vertex set $\{v_1,v_2,w_1,w_2\}$ is a gap - a contradiction.  Else $w_2$ is adjacent to at least one of $v_1,v_2$, say $v_1$.  Now the induced subgraph on vertex set $\{v_1,u,w_1,w_2\}$ is a diamond - a contradition.  We are then left with the case that every vertex $v \in V(H)$ is adjacent to all but one vertices of $V(H')$ and vice versa.  Picking $v_1,v_2 \in V(H) \smallsetminus V(H')$ and $w_1,w_2 \in V(H') \smallsetminus V(H)$, we see that the induced subgraph on $\{v_1,v_2,w_1,w_2\}$ is a 4-cycle - a contradiction.  

      We are left with the case $|H \cap H'| = 0$.  Since $G$ is connected, there is a path from each vertex of $H$ to that of $H'$.  Since $G$ is gap-free, there is a vertex $v \in V(H)$ adjacent to a vertex $w \in V(H')$.  If there are 2 vertices $v_1,v_2 \in V(H)$ not adjacent to any vertices of $V(H')$, then picking any two vertices $w_1,w_2 \in V(H')$ leaves an induced subgraph on $\{v_1,v_2,w_1,w_2\}$ that is a gap - a contradiction.  Thus, there are two vertices $v_1,v_2 \in V(H)$ each adjacent to vertices $w_1,w_2 \in V(H')$; moreover, we can choose $w_1 \neq w_2$, since otherwise $w_1=w_2 \in V(H)$.  The ordered vertex set $\{v_1,w_1,w_2,v_2\}$ then forms a 4-cycle.  By assumption, the induced subgraph on $\{v_1,v_2,w_1,w_2\}$  must be a complete graph and so $\{v_1,w_2\} \in E(G)$.  Since $H$ and $H'$ are distinct maximal cliques, there is a vertex $w_3 \in V(H')$ not adjacent to $v_1$.  Then the induced subgraph on $\{v_1,w_1,w_2,w_3\}$ is a diamond - a contradiction.

      We conclude that $G$ has at most one maximal clique $H$ of size at least $3$.  If it has no such cliques, $G$ must be a tree of diameter at most $3$, which is of the claimed form.  If $G$ has a single clique $H$ of size $m \ge 3$, suppose $v \in V(G) \smallsetminus V(H)$.  Suppose $v$ is adjacent to some vertex in $H$.  Since $v \notin V(H)$, there is a vertex $w \in V(H)$ not adjacent to $v$.  If $v$ is adjacent to at least two vertices, $u_1,u_2 \in V(H)$, then the induced subgrapho on $\{u_1,u_2,v,w\}$ is a diamond - a contradiction.  Thus, such a $v$ is adjacent to a unique vertex of $H$.
      
      If $v$ is not adjacent to $H$, then $G$ contains a gap formed from $v$, an adjacent vertex $w$ (necessarily not in $H$ by maximality), and two more vertices of $H$ not adjacent to $w$ by the above argument - again a contradiction.

      This concludes the proof.
\end{proof}

   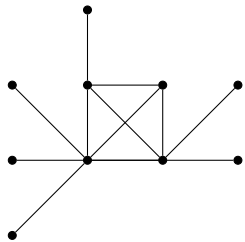
\begin{figure}[ht]
    \begin{tikzpicture}
        \node (a) at (0, 0) [circle,draw, fill,inner sep=0pt,minimum size=3pt] {};
        \node (b) at (1, 0) [circle,draw, fill,inner sep=0pt,minimum size=3pt] {};
        \node (c) at (1, 1) [circle,draw, fill,inner sep=0pt,minimum size=3pt] {};
        \node (d) at (0, 1) [circle,draw, fill,inner sep=0pt,minimum size=3pt] {};
        \node (e) at (0,2) [circle,draw, fill,inner sep=0pt,minimum size=3pt] {};
        \node (f) at (2,1) [circle,draw, fill,inner sep=0pt,minimum size=3pt] {};
        \node (g) at (2,0) [circle,draw, fill,inner sep=0pt,minimum size=3pt] {};
        \node (h) at (-1,0) [circle,draw, fill,inner sep=0pt,minimum size=3pt] {};
        \node (i) at (-1,1) [circle,draw, fill,inner sep=0pt,minimum size=3pt] {};
        \node (j) at (-1,-1) [circle,draw, fill,inner sep=0pt,minimum size=3pt] {};
    
        \draw (a) -- (b);
        \draw (b) -- (c);
        \draw (c) -- (d);
        \draw (a) -- (d);
        \draw (b) -- (f);
        \draw (a) -- (g);
        \draw (a) -- (h);
        \draw (a) -- (i);
        \draw (a) -- (j);
        \draw (a) -- (c);
        \draw (b) -- (d);
        \draw (d) -- (e);
    \end{tikzpicture}
    \caption{A typical graph with lower semimodular LCM lattice}\label{figure:lsm}
    \end{figure}

\subsection{Coatomic LCM Lattices}

While LCM lattices are always atomic, it is natural to ask when they are coatomic.  For edge ideals, there is a simple answer.

\begin{thm}\label{thm:coatomic:lcm:lattices}
    Let $G$ be a simple, connected, nontrivial graph.  Then $\L_{I(G)}$ is coatomic if and only $G = St_n$ for some $n$ or the every vertex has degree at least $2$.
\end{thm}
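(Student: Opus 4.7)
The plan is to first pin down the coatoms of $\L_{I(G)}$ explicitly, and then handle each direction with a direct meet computation.

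The elements of $\L_{I(G)}$ correspond to subsets $S \subseteq V(G)$ for which $G[S]$ has no isolated vertex, via $S \mapsto \prod_{w \in S} x_w$. A preliminary step is to show that, because $G$ is connected, the coatoms of $\L_{I(G)}$ are exactly the elements $\hat{1}/x_t$ such that $G - t$ has no isolated vertex --- equivalently, $t$ is not the unique neighbor of any degree-$1$ vertex of $G$. The argument is that any candidate coatom $m = \prod_{v \in S} x_v$ with $|V \setminus S| \ge 2$ would force $V \setminus S$ to span no edge to $S$, contradicting connectedness.

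For sufficiency, in the case $G = St_n$, Corollary~\ref{BooleanEdgeIdeal} gives that $\L_{I(G)}$ is Boolean and hence coatomic. In the case where every vertex of $G$ has degree at least $2$, every $\hat{1}/x_w$ is a coatom, and for any $m = \prod_{v \in S} x_v \in \L_{I(G)}$ with $m \ne \hat{1}$, the coatoms lying above $m$ are exactly $\{\hat{1}/x_w : w \notin S\}$. I would then observe that the gcd of these monomials is $m$ itself, which is in $\L_{I(G)}$, so their meet in $\L_{I(G)}$ equals $m$.

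For necessity, the heart of the proof, I would argue the contrapositive: suppose $G$ has a vertex $v$ of degree $1$ with neighbor $u$ and $G \ne St_n$. The key structural step --- and the main obstacle --- is to show that $u$ has some neighbor $s \ne v$ with $\deg(s) \ge 2$. Otherwise, all neighbors of $u$ would be leaves, forcing $\{u\} \cup N(u)$ to be a star component and hence (by connectedness) $G = St_{|N(u)|+1}$, a contradiction. Choose $s' \in N(s) \setminus \{u\}$; since $v \notin N(s)$, the set $\{s,s'\}$ is disjoint from $\{u,v\}$. Set $m = x_s x_{s'}$, an atom of $\L_{I(G)}$.

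It remains to show that $m$ is not a meet of coatoms. Since $v$ is isolated in $G - u$, the element $\hat{1}/x_u$ is not a coatom, so every coatom above $m$ has the form $\hat{1}/x_t$ with $t \notin \{u,s,s'\}$; in particular, $x_u$ divides every such coatom. Moreover, $mx_u \in \L_{I(G)}$ because $G[\{u,s,s'\}]$ contains the edges $\{u,s\}$ and $\{s,s'\}$ and so has no isolated vertex. Therefore $mx_u$ is a common lower bound in $\L_{I(G)}$ for the coatoms above $m$, forcing the meet of those coatoms to be at least $mx_u > m$. Consequently $m$ cannot be expressed as a meet of coatoms, and $\L_{I(G)}$ fails to be coatomic.
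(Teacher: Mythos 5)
Your proof is correct, and it rests on the same combinatorial core as the paper's: elements of $\L_{I(G)}$ are induced subgraphs without isolated vertices, coatoms are exactly the monomials $\hat{1}/x_t$ for which $G-t$ has no isolated vertex, and degree-one vertices are the only obstruction. The mechanism, however, is genuinely different. The paper argues through meet-irreducible elements: it identifies them as the induced subgraphs on $n-1$ vertices with isolated vertices discarded, so a leaf $v$ with neighbor $w$ yields a meet-irreducible element (the one supported on $V\setminus\{v,w\}$) that is not a coatom and hence not a meet of coatoms, while in the minimum-degree-two case all meet-irreducibles are coatoms and the standard fact that a finite lattice is generated under meets by its meet-irreducibles finishes the argument. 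You avoid that machinery entirely: your sufficiency direction is a direct gcd computation showing the meet of the coatoms above $m$ equals $m$, and your necessity direction exhibits an atom $x_sx_{s'}$ disjoint from $\{u,v\}$ and shows every coatom above it is divisible by $x_ux_sx_{s'}$, which lies in the lattice, so the meet of those coatoms strictly exceeds $x_sx_{s'}$. Your route is more elementary and self-contained --- in particular it supplies the small structural step that a connected non-star graph with a leaf has an edge avoiding the leaf and its neighbor, and it justifies the coatom description explicitly, whereas the paper's identification of the meet-irreducibles is asserted without proof. Both arguments are sound.
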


\begin{proof}
    Any Boolean lattice is coatomic, so we may assume that $G$ is not a star graph.  
    Set $n = |V(G)|$.  
    Note that elements of $\L_{I(G)}$ correspond to induced subgraphs with no isolated vertices.  Coatoms of $\L_{I(G)}$ all have degree $n-1$. 
    Therefore the coatoms correspond to induced subgraphs of $G$ on $n-1$ vertices with no isolated vertices.  The meet-irreducible elements, however, correspond to induced subgraphs on $n-1$ vertices after isolated vertices are discarded.  

    Suppose $v \in V$ and $\deg(v) = 1$ and let $w \in V$ be its unique neighbor.  Then the induced subgraph of $G$ on vertex set $V \smallsetminus\{v,w\}$ is a meet-irreducible element that is not a coatom, and thus not a meet of coatoms.

    On the other hand, if no vertices have degree $1$, then no induced subgraphs on $n-1$ vertices have isolated vertices.  Thus, all meet-irreducible elements are coatoms.  Since the meet-irreducible elements generate the lattice under the meet operation, $\L_{I(G)}$ is coatomic.
\end{proof}

\subsection{Complemented LCM Lattices}

While we give a characterization of complemented graphs, it is clearly not a hereditary condition and bit more difficult to visualize.

\begin{prop} \label{prop:complemented:LCM:lattice}
Let $G$ be a simple, connected, nontrivial graph. For any induced subgraph $H$ without isolated vertices, let $\mathcal{I}(H)$ denote the vertices $v \in V(G)$ connected only to vertices in $H$. The $\L_{I(G)}$ is complemented iff for every induced subgraph $H \in V$, there exists some independent set $X \subset H$ such that $\mathcal{I}(H) \subset N(X)$. 
\end{prop}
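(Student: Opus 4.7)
The plan is to translate the lattice conditions into set-theoretic conditions on vertex sets and then construct complements explicitly.  Each nonzero $m \in \L_{I(G)}$ is squarefree and equals $\prod_{v \in S} x_v$ for a unique set $S = V(H)$, where $H$ is an induced subgraph of $G$ without isolated vertices.  Under this correspondence, if $m, m'$ correspond to subgraphs on vertex sets $S, S'$, then $m \wedge m' = \hat 0$ iff $S \cap S'$ is independent in $G$, and $m \vee m' = \hat 1$ iff $S \cup S' = V(G)$.  Writing $S' = (V(G) \setminus S) \cup X$ with $X \subseteq S$, producing a complement of $m$ reduces to finding an independent set $X \subseteq V(H)$ so that the induced subgraph on $(V(G) \setminus V(H)) \cup X$ has no isolated vertices.

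For the $(\Leftarrow)$ direction, suppose such an independent set $X$ is given with $\mathcal{I}(H) \subseteq N(X)$, read as the set of vertices outside $V(H)$ all of whose neighbors lie in $V(H)$.  First shrink $X$ to $X' := \{x \in X : N(x) \cap (V(G) \setminus V(H)) \neq \varnothing\}$; the containment $\mathcal{I}(H) \subseteq N(X')$ persists because any $x \in X \setminus X'$ has no neighbor outside $V(H)$ and hence none in $\mathcal{I}(H)$.  Now set $S' := (V(G) \setminus V(H)) \cup X'$: each $x \in X'$ has a neighbor in $V(G) \setminus V(H) \subseteq S'$ by construction, and each $y \in V(G) \setminus V(H)$ either has a neighbor already in $V(G) \setminus V(H)$ or belongs to $\mathcal{I}(H)$ and hence has a neighbor in $X' \subseteq S'$.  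Thus $S'$ induces a subgraph without isolated vertices, and the associated monomial $m'$ is a complement of $m$ in $\L_{I(G)}$.  The extreme cases $m \in \{\hat 0, \hat 1\}$ are handled directly.

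For the $(\Rightarrow)$ direction, suppose $\L_{I(G)}$ is complemented and $H$ is an induced subgraph of $G$ without isolated vertices.  Let $m'$ be a complement of the corresponding $m$, with associated induced subgraph $H'$ on vertex set $S'$.  Put $X := V(H) \cap S'$.  Then $X \subseteq V(H)$ is independent because $m \wedge m' = \hat 0$.  To verify $\mathcal{I}(H) \subseteq N(X)$, take $y \in \mathcal{I}(H)$: since $y \notin V(H)$ and $V(H) \cup S' = V(G)$, we have $y \in S'$; since $H'$ has no isolated vertices, $y$ has a neighbor $z \in S'$; and since $N(y) \subseteq V(H)$, the vertex $z$ lies in $V(H) \cap S' = X$, so $y \in N(X)$.

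The principal subtlety is the shrinking step in the $(\Leftarrow)$ direction: without it a vertex $x \in X$ with all neighbors in $V(H)$ would be isolated in the candidate induced subgraph on $S'$, and the corresponding monomial would not actually lie in $\L_{I(G)}$.  A secondary point worth flagging is that the statement is only consistent if $\mathcal{I}(H)$ is understood to exclude vertices of $V(H)$ themselves; this is also the only reading that matches the example immediately preceding the proposition, in which the subgraph on $\{1,2,3,4\}$ has the complement $x_5x_6$ even though the vertices $1,2$ have all neighbors inside $\{1,2,3,4\}$.
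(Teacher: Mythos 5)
Your proof is correct and follows essentially the same route as the paper's: both directions identify lattice elements with vertex sets of induced subgraphs without isolated vertices, translate meet and join into intersection and union, extract the independent set from $V(H)\cap S'$ for a given complement, and build a complement from $V(G)\setminus V(H)$ together with just enough vertices of $X$ to rescue the would-be isolated vertices of $\mathcal{I}(H)$. Your shrinking of $X$ to $X'$ plays the same role as the paper's choice of edges $e_a$ in its construction of the complement $B$, and your reading of $\mathcal{I}(H)$ as excluding $V(H)$ matches the paper's usage.
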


\begin{proof}
    Suppose $\L_{I(G)}$ is complemented. Then, let $x_A,x_B \in \L_{I(G)}$ be a pair of complements. Let $\mathcal{I}(A)$ denote the vertices that become isolated in $G[V\setminus A]$. Since $\mathcal{I}(A) \subset V \setminus A$, we have that $\mathcal{I}(A) \subset B$. Consider the set $B \cap A$. Since $B$ is a complement, $B \wedge A = \bigvee_{e \in B \cap A}e = \varnothing$. Hence, $B \cap A$ is an independent set. Note that since they correspond to elements of the LCM lattice, $A$ and $B$ are both unions of edges. As such, for each $a \in \mathcal{I}(A)$, there exists an edge $e_a \subset B$ to which it's incident. Then, $\bigcup_{a \in \mathcal{I}(A)}e_a \subset B$ satisfies $A\cap \bigcup_{a \in \mathcal{I}(A)}e_a \subset A \cap B = \varnothing$. We then have that $\mathcal{I}(A) \subset N(A\cap \bigcup_{a \in \mathcal{I}(A)}e_a)$.

    Suppose for every union of edges $A \subseteq V$, there exists some independent $X \subset A$ such that $\mathcal{I}(A) \subset N(X)$. Let $A \subseteq V$ be a union of edges. Choose any independent $X \subset A$ such that $\mathcal{I}(A) \subset N(X)$. We then have that for each $a \in \mathcal{I}(A)$, there exists an edge $e_a$ connecting some element in $X$ to $a$. Consider $B := \bigg(\bigcup_{a \in \mathcal{I}(A)}e_a\bigg)\cup (V(G)\setminus (A\cup \mathcal{I}(A)))$.
    Since every vertex in $V(G)\setminus (A\cup \mathcal{I}(A))$ is non-isolated in $G[V\setminus A]$, we have that $(V(G)\setminus (A\cup \mathcal{I}(A)))$ can be obtained as a union of edges. Hence $B$ is also a union of edges. Furthermore, $A \cap B = A\cap \bigcup_{a \in \mathcal{I}(A)}e_a \subset X$. Hence $A \wedge B = \varnothing$. At the same time, $A \cup B = A \cup (V(G)\setminus A) = V(G)$. 
    Hence $B$ is a complement for $A$.
\end{proof}

\subsection{LCM Lattices of Edge Ideals for Certain Graph Families}

In this section, we consider the lattice properties of LCM lattices of edge ideals for some common families of graphs.  A summary Venn diagram with example graphs of each type is given in Figure~\ref{venn}.  Most of their placements follows from the following Proposition~\ref{prop:special:graphs} and Propostion~\ref{prop:gray:areas}.  The rest are easily checked by hand.

\def\firstellip{(3.2, 0) ellipse [x radius=6cm, y radius=3cm, rotate=50]}
\def\secondellip{(0.6, 1cm) ellipse [x radius=6cm, y radius=3cm,
rotate=50]} \def\thirdellip{(-3.2, 0) ellipse [x radius=6cm, y radius=3cm,
rotate=-50]} \def\fourthellip{(-0.6, 1cm) ellipse [x radius=6cm, y
radius=3cm, rotate=-50]} 
 \def\fifthellip{(-1.6, 1cm) ellipse [x radius=8cm, y radius=5.5cm, rotate=-50]}
 \def\innercircle{(0, -1.2cm) circle [radius=.8cm]}

\begin{center}
\begin{figure}[!ht]
\resizebox{15cm}{!}{
\begin{tikzpicture} \filldraw[fill=black, opacity=0.2];


    \scope \fill[white] \secondellip; \fill[white] \thirdellip; \fill[white]
    \firstellip; \endscope

    \begin{scope}
        \begin{scope}[even odd rule]
            \clip \firstellip (-8,-8) rectangle (8,8);
        \fill[lightgray] \fourthellip;
        \end{scope}
    \end{scope}

     \begin{scope}
        \begin{scope}[even odd rule]
            \clip \firstellip (-8,-8) rectangle (8,8);
        \fill[lightgray] \thirdellip;
        \end{scope}
    \end{scope}

         \begin{scope}
        \begin{scope}[even odd rule]
            \clip \firstellip (-8,-8) rectangle (8,8);
            \clip \secondellip (-8,-8) rectangle (8,8);
        \fill[white] \thirdellip;
        \end{scope}
    \end{scope}

        \begin{scope}
        \begin{scope}[even odd rule]
            \clip \firstellip (-8,-8) rectangle (8,8);
            \clip \secondellip (-8,-8) rectangle (8,8);
        \fill[white] \fourthellip;
        \end{scope}
    \end{scope}
   
    \draw \firstellip node [label={[xshift=3.1cm, yshift=3.4cm]complemented}] {};
    \draw \secondellip node [label={[xshift=2.5cm, yshift=4.2cm]coatomic}] {};
    \draw \thirdellip node [label={[xshift=-3.0cm, yshift=4cm]LSM}] {};
    \draw \fourthellip node [label={[xshift=-2.4cm, yshift=4cm]supersolvable}] {};
    \draw \fifthellip node [label={[xshift=2.5cm, yshift=-7cm]graded}] {};
    \draw \innercircle node [label={Boolean}] {};
    \draw (0,-2.2cm) node  {modular} ;

        \node (a1) at (-2.5+0 + 3.4,-4.3+2.5 + 0.0-1.3) [circle, draw, fill, inner sep=1pt,minimum size = 3pt] {};
        \node (a2) at (-2.5+-0.3 + 3.5,-4.3+2.5 + -0.3-1.3) [draw, circle, fill, inner sep=1pt,minimum size = 3pt] {};
        \node (a3) at (-2.5+0.2 + 3.4,-4.3+2.5 + -0.3-1.3) [draw, fill, circle, inner sep=1pt,minimum size = 3pt] {};
        \node (a4) at (-2.5-0.65 + 3.5,-4.3+2.5 + -0.3-1.3) [draw, circle, fill, inner sep=1pt,minimum size = 3pt] {};
        \draw (a1) -- (a2);
        \draw (a2) -- (a3);
        \draw (a1) -- (a3);
        \draw (a4) -- (a2);

        \node (a1) at (-2+0.5/3 + 8.3,3.8-2) [circle, draw, fill, inner sep=1pt,minimum size = 3pt] {};
        \node (a2) at (-2+0.5 + 8.3,3.8-2) [circle, draw, fill, inner sep=1pt,minimum size = 3pt] {};
        \node (a3) at (-2+-0.5/3 + 8.3,3.8-2) [circle, draw, fill, inner sep=1pt,minimum size = 3pt] {};
        \node (a4) at (-2+-0.5 + 8.3,3.8-2) [circle, draw, fill, inner sep=1pt,minimum size = 3pt] {};
        \node (a5) at (-2+-.5 -1/3 + 8.3,3.8-2) [circle, draw, fill, inner sep=1pt,minimum size = 3pt] {};
        \draw (a1) -- (a2);
        \draw (a3) -- (a1);
        \draw (a4) -- (a3);
        \draw (a5) -- (a4);

        \node (a1) at (0,-1.5) [circle, draw, fill, inner sep=1pt,minimum size = 3pt] {};
        \node (a2) at (0,-1.5+.35) [circle, draw, fill, inner sep=1pt,minimum size = 3pt] {};
         \node (a3) at (.32,-1.5-.2) [circle, draw, fill, inner sep=1pt,minimum size = 3pt] {};
         \node (a4) at (-.32,-1.5-.2) [circle, draw, fill, inner sep=1pt,minimum size = 3pt] {};
         \draw (a1) -- (a2);
        \draw (a1) -- (a3);
        \draw (a1) -- (a4);

         \node (a1) at (0,-2.6) [circle, draw, fill, inner sep=1pt,minimum size = 3pt] {};
          \node (a2) at (.22,-3) [circle, draw, fill, inner sep=1pt,minimum size = 3pt] {};
           \node (a3) at (-.22,-3) [circle, draw, fill, inner sep=1pt,minimum size = 3pt] {};
         \draw (a1) -- (a2);
         \draw (a1) -- (a3);
         \draw (a2) -- (a3);

          \node (a1) at (-1.1,-3.1) [circle, draw, fill, inner sep=1pt,minimum size = 3pt] {};
           \node (a2) at (-.8,-3.4) [circle, draw, fill, inner sep=1pt,minimum size = 3pt] {}; 
           \node (a3) at (-.8,-3.1) [circle, draw, fill, inner sep=1pt,minimum size = 3pt] {}; 
           \node (a4) at (-1.1,-3.4) [circle, draw, fill, inner sep=1pt,minimum size = 3pt] {};
                    \draw (a1) -- (a2);
         \draw (a1) -- (a3);
         \draw (a1) -- (a4);
         \draw (a2) -- (a3);
         \draw (a2) -- (a4);
         \draw (a3) -- (a4);

\node (a1) at (0,-3.1-1) [circle, draw, fill, inner sep=1pt,minimum size = 3pt] {};
           \node (a2) at (.3,-3.4-1) [circle, draw, fill, inner sep=1pt,minimum size = 3pt] {}; 
           \node (a3) at (.3,-3.1-1) [circle, draw, fill, inner sep=1pt,minimum size = 3pt] {}; 
           \node (a4) at (0,-3.4-1) [circle, draw, fill, inner sep=1pt,minimum size = 3pt] {};
            \node (a5) at (-.3,-3.4-1) [circle, draw, fill, inner sep=1pt,minimum size = 3pt] {};
                    \draw (a1) -- (a2);
         \draw (a1) -- (a3);
         \draw (a1) -- (a4);
         \draw (a2) -- (a3);
         \draw (a2) -- (a4);
         \draw (a3) -- (a4);
                  \draw (a4) -- (a5);

\node (a1) at (-3,3) [circle, draw, fill, inner sep=1pt,minimum size = 3pt] {};
\node (a2) at (-3.3,3) [circle, draw, fill, inner sep=1pt,minimum size = 3pt] {};
\node (a3) at (-3.6,3) [circle, draw, fill, inner sep=1pt,minimum size = 3pt] {};
\node (a4) at (-3.9,3) [circle, draw, fill, inner sep=1pt,minimum size = 3pt] {};
                    \draw (a1) -- (a2);
                    \draw (a2) -- (a3);
                    \draw (a3) -- (a4);

\node (a1) at (-6.3,3) [circle, draw, fill, inner sep=1pt,minimum size = 3pt] {};
\node (a2) at (-6.3,2.6) [circle, draw, fill, inner sep=1pt,minimum size = 3pt] {};
\node (a3) at (-6.1,2.3) [circle, draw, fill, inner sep=1pt,minimum size = 3pt] {};
\node (a4) at (-6.5,2.3) [circle, draw, fill, inner sep=1pt,minimum size = 3pt] {};
\node (a5) at (-6.8,2.1) [circle, draw, fill, inner sep=1pt,minimum size = 3pt] {};
\node (a6) at (-5.8,2.1) [circle, draw, fill, inner sep=1pt,minimum size = 3pt] {};
                    \draw (a1) -- (a2);
                    \draw (a2) -- (a3);
                    \draw (a2) -- (a4);
                    \draw (a3) -- (a4);
                    \draw (a3) -- (a6);
                    \draw (a4) -- (a5);

\node (a1) at (-2,4.7) [circle, draw, fill, inner sep=1pt,minimum size = 3pt] {};
\node (a2) at (-2.4,4.7) [circle, draw, fill, inner sep=1pt,minimum size = 3pt] {};
\node (a3) at (-2,4.3) [circle, draw, fill, inner sep=1pt,minimum size = 3pt] {};
\node (a4) at (-2.4,4.3) [circle, draw, fill, inner sep=1pt,minimum size = 3pt] {};
\node (a5) at (-2.8,4.7) [circle, draw, fill, inner sep=1pt,minimum size = 3pt] {};
\node (a6) at (-1.6,4.3) [circle, draw, fill, inner sep=1pt,minimum size = 3pt] {};
                    \draw (a1) -- (a2);
                    \draw (a2) -- (a3);
                    \draw (a3) -- (a4);
                    \draw (a2) -- (a4);
                    
                    \draw (a1) -- (a3);
                    \draw (a2) -- (a5);
                    \draw (a4) -- (a6);

\node (a1) at (-2-2,4.7+2) [circle, draw, fill, inner sep=1pt,minimum size = 3pt] {};
\node (a2) at (-2.4-2,4.7+2) [circle, draw, fill, inner sep=1pt,minimum size = 3pt] {};
\node (a3) at (-2-2,4.3+2) [circle, draw, fill, inner sep=1pt,minimum size = 3pt] {};
\node (a4) at (-2-2.4,4.3+2) [circle, draw, fill, inner sep=1pt,minimum size = 3pt] {};
\node (a5) at (-2.8-2,4.7+2) [circle, draw, fill, inner sep=1pt,minimum size = 3pt] {};
                    \draw (a1) -- (a2);
                    \draw (a2) -- (a4);
                    
                    \draw (a1) -- (a3);
                    \draw (a2) -- (a5);
                    \draw (a4) -- (a3);

\node (a1) at (3,2-.2) [circle, draw, fill, inner sep=1pt,minimum size = 3pt] {};            \node (a2) at (3,2.4-.2) [circle, draw, fill, inner sep=1pt,minimum size = 3pt] {};  
\node (a3) at (3.4,2.2) [circle, draw, fill, inner sep=1pt,minimum size = 3pt] {};  
\node (a4) at (3.4,1.8) [circle, draw, fill, inner sep=1pt,minimum size = 3pt] {};  
\draw (a1) -- (a2);
\draw (a2) -- (a3);
\draw (a3) -- (a4);
\draw (a1) -- (a4);

\node (a1) at (3-2+.5,2-.2-2+.5) [circle, draw, fill, inner sep=1pt,minimum size = 3pt] {};            
\node (a2) at (3-2+.5,2.4-.2-2+.5) [circle, draw, fill, inner sep=1pt,minimum size = 3pt] {};  
\node (a3) at (3.4-2+.5,2.2-2+.5) [circle, draw, fill, inner sep=1pt,minimum size = 3pt] {};  
\node (a4) at (3.4-2+.5,1.8-2+.5) [circle, draw, fill, inner sep=1pt,minimum size = 3pt] {};  
\draw (a1) -- (a2);
\draw (a2) -- (a3);
\draw (a3) -- (a4);
\draw (a1) -- (a4);
\draw (a2) -- (a4);

\node (a1) at (3.9+.3,3.5+.3) [circle, draw, fill, inner sep=1pt,minimum size = 3pt] {}; 
\node (a2) at (4.3+.3,3.5+.3) [circle, draw, fill, inner sep=1pt,minimum size = 3pt] {};
\node (a3) at (4.5+.3,3.5-.35+.3) [circle, draw, fill, inner sep=1pt,minimum size = 3pt] {};
\node (a4) at (4.3+.3,3.5-.7+.3) [circle, draw, fill, inner sep=1pt,minimum size = 3pt] {};
\node (a5) at (3.9+.3,3.5-.7+.3) [circle, draw, fill, inner sep=1pt,minimum size = 3pt] {};
\node (a6) at (3.7+.3,3.5-.35+.3) [circle, draw, fill, inner sep=1pt,minimum size = 3pt] {};
\draw (a1) -- (a2);
\draw (a2) -- (a3);
\draw (a3) -- (a4);
\draw (a4) -- (a5);
\draw (a5) -- (a6);
\draw (a1) -- (a6);

\node (a1) at (2,-3) [circle, draw, fill, inner sep=1pt,minimum size = 3pt] {};            
\node (a2) at (2.4,-3) [circle, draw, fill, inner sep=1pt,minimum size = 3pt] {};  
\node (a3) at (2.4,-3.4) [circle, draw, fill, inner sep=1pt,minimum size = 3pt] {};  
\node (a4) at (2,-3.4) [circle, draw, fill, inner sep=1pt,minimum size = 3pt] {};  
\node (a5) at (2.8,-3.4) [circle, draw, fill, inner sep=1pt,minimum size = 3pt] {}; 
\draw (a1) -- (a2);
\draw (a2) -- (a3);
\draw (a3) -- (a4);
\draw (a1) -- (a4);
\draw (a1) -- (a3);
\draw (a3) -- (a5);

\node (a1) at (4,-1) [circle, draw, fill, inner sep=1pt,minimum size = 3pt] {};  
\node (a2) at (4.4,-1) [circle, draw, fill, inner sep=1pt,minimum size = 3pt] {};         
\node (a3) at (4.8,-1) [circle, draw, fill, inner sep=1pt,minimum size = 3pt] {};         
\node (a4) at (4.4,-1.4) [circle, draw, fill, inner sep=1pt,minimum size = 3pt] {};         \node (a5) at (4.2,-.65) [circle, draw, fill, inner sep=1pt,minimum size = 3pt] {};         
\node (a6) at (4.6,-.65) [circle, draw, fill, inner sep=1pt,minimum size = 3pt] {};         
\draw (a1) -- (a2);
\draw (a2) -- (a3);
\draw (a2) -- (a4);
\draw (a1) -- (a5);
\draw (a2) -- (a5);
\draw (a2) -- (a6);
\draw (a3) -- (a6);
\draw (a5) -- (a6);

\node (a1) at (3.9+.3-3,3.5+.3+1) [circle, draw, fill, inner sep=1pt,minimum size = 3pt] {}; 
\node (a2) at (4.3+.3-3,3.5+.3+1) [circle, draw, fill, inner sep=1pt,minimum size = 3pt] {};
\node (a3) at (4.5+.3-3,3.5-.35+.3+1) [circle, draw, fill, inner sep=1pt,minimum size = 3pt] {};
\node (a4) at (4.3+.3-3,3.5-.7+.3+1) [circle, draw, fill, inner sep=1pt,minimum size = 3pt] {};
\node (a5) at (3.9+.3-3,3.5-.7+.3+1) [circle, draw, fill, inner sep=1pt,minimum size = 3pt] {};
\node (a6) at (3.7+.3-3,3.5-.35+.3+1) [circle, draw, fill, inner sep=1pt,minimum size = 3pt] {};
\draw (a1) -- (a2);
\draw (a2) -- (a3);
\draw (a3) -- (a4);
\draw (a4) -- (a5);
\draw (a5) -- (a6);
\draw (a1) -- (a6);
\draw (a1) -- (a4);
\draw (a2) -- (a5);

\node (a1) at (3,5.2) [circle, draw, fill, inner sep=1pt,minimum size = 3pt] {}; 
\node (a2) at (3,4.8) [circle, draw, fill, inner sep=1pt,minimum size = 3pt] {}; 
\node (a3) at (3,4.4) [circle, draw, fill, inner sep=1pt,minimum size = 3pt] {}; 
\node (a4) at (3.4,5.2) [circle, draw, fill, inner sep=1pt,minimum size = 3pt] {}; 
\node (a5) at (3.4,4.8) [circle, draw, fill, inner sep=1pt,minimum size = 3pt] {}; 
\node (a6) at (3.4,4.4) [circle, draw, fill, inner sep=1pt,minimum size = 3pt] {}; 
\node (a7) at (3.2,5) [circle, draw, fill, inner sep=1pt,minimum size = 3pt] {}; 
\draw (a1) -- (a2);
\draw (a2) -- (a3);
\draw (a3) -- (a6);
\draw (a6) -- (a5);
\draw (a5) -- (a4);
\draw (a1) -- (a4);
\draw (a2) -- (a5);
\draw (a2) -- (a7);
\draw (a5) -- (a7);

\node (a1) at (0,-6.5) [circle, draw, fill, inner sep=1pt,minimum size = 3pt] {}; 
\node (a2) at (0.4,-6.5) [circle, draw, fill, inner sep=1pt,minimum size = 3pt] {}; 
\node (a3) at (0.8,-6.5) [circle, draw, fill, inner sep=1pt,minimum size = 3pt] {}; 
\node (a4) at (1.2,-6.5) [circle, draw, fill, inner sep=1pt,minimum size = 3pt] {}; 
\node (a5) at (-.4,-6.5) [circle, draw, fill, inner sep=1pt,minimum size = 3pt] {}; 
\node (a6) at (-.8,-6.5) [circle, draw, fill, inner sep=1pt,minimum size = 3pt] {}; 
\node (a7) at (-1.2,-6.5) [circle, draw, fill, inner sep=1pt,minimum size = 3pt] {}; 
\draw (a1) -- (a2);
\draw (a2) -- (a3);
\draw (a3) -- (a4);
\draw (a1) -- (a5);
\draw (a5) -- (a6);
\draw (a6) -- (a7);

\end{tikzpicture} 
}
\caption{Venn Diagram for LCM lattices of edge ideals}\label{venn}
\end{figure}
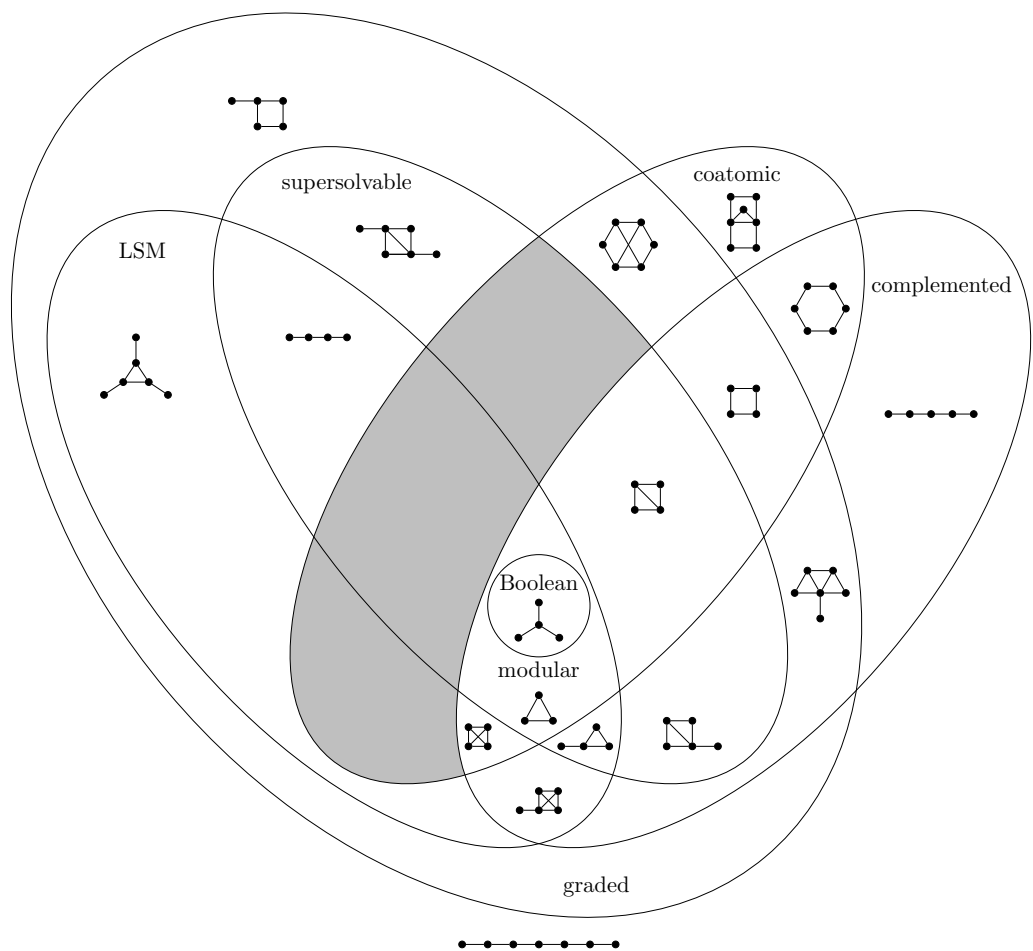
\end{center}

\begin{prop}\label{prop:special:graphs}
    We have the following characterizations:    
\begin{enumerate}
    \item The path graph $P_n$ has a \begin{itemize}
        \item[$\bullet$] graded LCM lattice iff $n \leq 4$.
        \item[$\bullet$] complemented LCM lattice iff $n \not\equiv 1 (\mathrm{mod } \, 3)$.
    \end{itemize}
    \item The cycle graph $C_n$ has a 
    \begin{itemize}
        \item[$\bullet$] graded LCM lattice iff $n \leq 5$.
        \item[$\bullet$] complemented LCM lattice for all $n \ge 3$.
    \end{itemize}
    \item The complete graph $K_n$ has a graded and complemented LCM lattice for all $n \geq 2$.
\end{enumerate}
\end{prop}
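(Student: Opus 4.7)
The plan is to apply the characterization theorems from earlier sections to each graph family. The graded claims follow directly from Theorem~\ref{GradedGraph} (graded iff gap-free): $P_4$ is gap-free because its only 4-vertex induced subgraph is itself, which contains the edge $\{2,3\}$, while $P_n$ with $n\geq 5$ has an induced $2K_2$ on $\{1,2,4,5\}$; similarly $C_3,C_4,C_5$ are gap-free but $C_n$ with $n\geq 6$ has an induced $2K_2$ on $\{1,2,4,5\}$; and every 4-vertex induced subgraph of $K_n$ is $K_4\neq 2K_2$.

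For the complemented parts, we apply Proposition~\ref{prop:complemented:LCM:lattice}. For $K_n$: given any nontrivial $H$, pick any $w\in V(H)$; then $X:=\{w\}$ is independent and $N(X)=V\setminus\{w\}\supseteq\mathcal{I}(H)$. For $C_n$: every nontrivial $H$ decomposes as a disjoint union of arcs of length $\geq 2$, and $\mathcal{I}(H)$ consists exactly of the singleton holes separating consecutive arcs. Taking $X:=\{v-1:v\in\mathcal{I}(H)\}\subseteq V(H)$ works, since two singleton holes in $C_n$ must lie at distance $\geq 3$, so their left neighbors are mutually non-adjacent, whence $X$ is independent and by construction $\mathcal{I}(H)\subseteq N(X)$.

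The main content is the path case. For the necessary direction, when $n=3k+1$ take $A=\{2,3\}\cup\{5,6\}\cup\cdots\cup\{3k-1,3k\}$, a union of $k$ length-2 arcs separated by singleton holes with both endpoints in singleton holes. Then $\mathcal{I}(A)=\{1,4,7,\ldots,3k-2,n\}$, and the endpoint constraints force $2\in X$ and $3k\in X$. Independence within each length-2 arc cascades: $2\in X$ forbids $3$, so the hole at $4$ forces $5\in X$, which forbids $6$, so the hole at $7$ forces $8\in X$, and so on, ultimately forcing $3k-1\in X$; but $3k-1$ is adjacent to the required $3k\in X$, a contradiction. For the converse, given any $A\in\L_{I(P_n)}$ with maximal arcs $A_1,\ldots,A_m$, one argues a valid $X$ fails to exist only if (i) each arc has length exactly $2$, (ii) each interior gap is a singleton, and (iii) both endpoints $1$ and $n$ lie in $\mathcal{I}(A)$. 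A vertex count then gives $n=2m+(m-1)+2=3m+1$, contradicting the hypothesis $n\not\equiv 1\pmod 3$.

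The main obstacle is the converse direction in the path case: one must verify that any deviation from the tight bad structure (some arc of length $\geq 3$, some interior gap of size $\geq 2$, or some endpoint not in $\mathcal{I}(A)$) actually permits construction of a valid independent $X$. Intuitively, an arc of length $\geq 3$ has non-adjacent left and right endpoints so both may lie in $X$ simultaneously, breaking the cascade; a gap of size $\geq 2$ contributes no element of $\mathcal{I}(A)$ and so decouples the arcs on either side; and an unforced endpoint simply removes the boundary forcing. Making these observations rigorous amounts to verifying satisfiability of the $2$-SAT instance encoding the independence and covering constraints, either by induction on the number of arcs or by a direct greedy construction that propagates choices away from the broken link.
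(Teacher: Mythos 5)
Your graded claims, the $K_n$ and $C_n$ complemented arguments, and the necessity direction for $P_n$ (the $n=3k+1$ obstruction via the configuration $\{2,3\}\cup\{5,6\}\cup\cdots\cup\{3k-1,3k\}$ and the forced cascade $2,5,8,\ldots,3k-1\in X$ colliding with $3k\in X$) are all correct; this is essentially the same witness the paper uses (its $Y=\{i\equiv 0,2 \pmod 3\}$ is the same set), though you route everything through Proposition~\ref{prop:complemented:LCM:lattice} while the paper constructs explicit complement elements in the lattice. The routing is legitimate and arguably cleaner for $C_n$ and $K_n$.

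The genuine gap is the sufficiency direction for $P_n$: you assert that a valid independent $X$ fails to exist \emph{only if} every arc has length $2$, every interior gap is a singleton, and both endpoints are forced, but you never prove this implication --- you explicitly defer it (``one must verify\ldots,'' ``amounts to verifying satisfiability of the 2-SAT instance\ldots either by induction\ldots or by a direct greedy construction''). Naming two possible strategies without executing either is not a proof, and this implication is the main content of the path case: one must show that a single arc of length $\ge 3$, a single gap of size $\ge 2$, or a single unforced endpoint lets the forcing chain terminate consistently, and that when none of these occur one necessarily has $n\equiv 1\pmod 3$. The paper fills exactly this hole by an explicit construction: writing the complement of $V(A)$ as a union of intervals $\bigcup_i[a_i,b_i]$, it produces a complement element by shifting each interval left or right by one (five cases, according to whether $a_1>1$, $b_t<n$, some gap between consecutive intervals exceeds $2$, some interval has length $\ge 2$, or none of these --- the last forcing $n\equiv 1\pmod 3$). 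To complete your argument you would need to supply an analogous explicit construction of $X$ (or of the complementary lattice element) in each non-degenerate configuration, or carry out the induction you allude to.
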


\begin{proof}
    \textit{(1)}: 
    For any $n \geq 5$, $P_n$ has an induced $P_5$. Thus, $P_n$ is gap-free if and only if $n \le 4$, which happens if and only if   $\L_{I(P_n)}$ is  graded by Theorem~\ref{GradedGraph}.

    Now suppose $n \equiv 1 \,(\mathrm{mod}\,3)$ with $n > 1$. We show that $\L_{I(P_n)}$ is not complemented. To this end, consider the element indexed by the set $Y = \{i \in [n] : i \equiv 0,2 \,(\mathrm{mod}\,3)\}$ and $T = [n] \smallsetminus Y$. Note that $x_Y \in \L_{I(P_n)}$.  We claim no complement exists for $x_Y$.  Toward a contradiction, suppose $x_C$ is a complement.  Since $C$ is a union of edges of $P_n$, for all $i \in [n]$ with $i \equiv 1 \,(\mathrm{mod} \, 3)$, $i \in C$ and either $i+1 \in C$ or $i-1 \in C$.  Since $0 \notin [n]$, $2 \in C$, and since $n+1 \notin [n]$, $n-1 \in C$.  Thus, there is a first index $1 < j < n$ with $j \equiv 0 \,(\mathrm{mod} \, 3)$ and $j \in C$.  Therefore $j-3 \notin C$ and so $j-1 \in C$.  It follows that $x_{j-1}x_{j} \mid (x_Y \wedge x_C)$ and so $x_C$ is not a complement for $x_Y$. 
    
    Suppose $n \not\equiv 1\,(\mathrm{mod} \,3)$.  Let $Y \subseteq [n]$ so that $x_Y \in \L_{I(P_n)}$, meaning that $Y$ is a union of edges of $P_n$.  Set $X = [n] \smallsetminus Y$.  As $X$ records the gaps not covered by $Y$, we can write $X = \bigcup_{i = 1}^t [a_i,b_i]$, where $1 \le a_1 \le b_1 < a_2 \le b_2 < \cdots < a_t \le b_t \le n$, $a_{i+1} \ge b_i+2$ for $i < n$, and $[a_i,b_i] = \{x \in \mathbb{Z} \mid a_i \le x \le b_i\}$.
    We consider the following cases:\\
    \noindent \underline{Case 1: $a_1 > 1$}: Take $\tilde{X} = \bigcup_{i = 1}^t [a_i-1,b_i]$.  Observe that $x_{\tilde{X}} \in \L_{I(P_n)}$, $x_{\tilde{X}} \wedge x_Y = x_{[n]}$, and $x_{\tilde{X}} \wedge x_Y = 1$.\\
    \noindent \underline{Case 2: $b_t < n$}: Take $\tilde{X} = \bigcup_{i = 1}^t [a_i,b_i+1]$.  Once again $x_{\tilde{X}} \in \L_{I(P_n)}$, $x_{\tilde{X}} \wedge x_Y = x_{[n]}$, and $x_{\tilde{X}} \wedge x_Y = 1$.\\
    \noindent \underline{Case 3: There exists $1 \le j \le n$ with $a_{j+1} > b_j + 2$}: In this case we choose $\tilde{X} = \bigcup_{i = 1}^j [a_i,b_i+1] \cup \bigcup_{i = j+1}^t [a_{i}-1,b_i]$, and the same conclusion follows.\\
    \noindent \underline{Case 4: There exists $1 \le j \le n$ with $b_j > a_j$}:
    Now we choose $\tilde{X} = \bigcup_{i = 1}^{j-1} [a_i,b_i+1] \cup [a_j,b_j] \cup \bigcup_{i = j+1}^t [a_{i}-1,b_i]$.\\
    \noindent \underline{Case 5: None of Cases 1-4 hold}: If none of these cases hold, it follows that $a_i = b_i$ for all $i$, $a_{i+1} = b_i+2$ for all $i < n$, $a_1 = 1$, and $b_t = n$.  It follows by induction that $n \equiv 1\,(\mathrm{mod} \,3)$ - contradiction.\\
    
    \noindent \textit{(2)}:
    For $n \geq 6$, $C_n$ has an induced $P_5$ and is therefore not graded. For $n = 3,4,5$, the graph of $C_n$ is gap-free and thus graded by Theorem~\ref{GradedGraph}. 

    Suppose $n \geq 3$. Label the vertices of $C_n$ by the elements $V := \{\bar{0},\bar{1},\dotsc,\bar{n-1}\} = \mathbb{Z}/n\mathbb{Z}$ such that each $\bar{i}$ is adjacent to $\bar{i+1}$.
    Now, take a union of edges $Y$ so that $x_Y \in \L_{I(C_n)}$. Then consider $T = \bigcup_{\bar{i} \notin Y} \{\bar{i},\bar{i+1}\}$.
    We have $T \cup Y = V$ since the complement of $Y$ is contained in $T$.
    Observe that no edges in $S$ are induced in $T$ since for any $\bar{i} \notin Y$, we must have $\bar{i+1},\bar{i+2} \in Y$.  It follows that $x_T \vee x_Y = x_{[n]}$ and $x_T \wedge x_Y = 1$.\\

    \noindent \textit{(3)}:
    That $\L_{I(K_n)}$ is graded follows easily from Theorem~\ref{GradedGraph}. 

    Let the vertices of $K_n$ be denoted by $[n]$. To see that it is also complemented, suppose $Y \subsetneq [n]$.  If $Y = [n] - \{i\}$, then taking $T = \{1,i\}$ we have $x_Y \vee x_T = x_{[n]}$ and $x_Y \wedge x_T = 1$. Otherwise, take $T = [n] - Y$ and the same conclusion follows. Thus, $\L_{I(K_n)}$ is complemented.
\end{proof}

The Venn diagram in Figure~\ref{venn} reveals a few interesting features not true of more general lattices.  The gray areas indicate that there are no such graphs with the indicated properties. 

\begin{prop}\label{prop:gray:areas}
    Let $G$ be a simple, connected, nontrivial graph.  
    \begin{enumerate}
        \item If $\L_{I(G)}$ is supersolvable and coatomic, then it is complemented.
        \item If $\L_{I(G)}$ is lower semimodular and coatomic, then it is complemented.
        \item If $\L_{I(G)}$ is supersolvable, lower semimodular, and coatomic, then it is modular.
    \end{enumerate}
\end{prop}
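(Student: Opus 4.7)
My plan is to combine the graph-theoretic characterizations established in Theorems~\ref{thm:supersolvable}, \ref{thm:lsm}, and \ref{thm:coatomic:lcm:lattices} with the complementedness criterion of Proposition~\ref{prop:complemented:LCM:lattice} and the implications summarized in Figure~\ref{fig:lattice:implications}. In each of the three parts, if $G$ is a star graph then $\L_{I(G)}$ is Boolean by Corollary~\ref{BooleanEdgeIdeal} and hence trivially satisfies every conclusion. I may therefore assume in all three parts that every vertex of $G$ has degree at least $2$.

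For part (1), Theorem~\ref{thm:supersolvable} supplies an edge $e=\{a,b\}$ adjacent to every other edge of $G$, so every edge of $G$ contains $a$ or $b$. Consequently, any vertex $v\notin\{a,b\}$ has $N(v)\subseteq\{a,b\}$, and coatomicity forces $N(v)=\{a,b\}$. To verify the criterion of Proposition~\ref{prop:complemented:LCM:lattice}, let $V(H)$ be a union of edges of $G$; at least one of $a,b$ must lie in $V(H)$, else $V(H)$ contains no edge. Without loss of generality assume $a\in V(H)$ and set $X:=\{a\}$, which is independent. A brief case split on whether $b\in V(H)$ then shows that every $v\in\mathcal{I}(V(H))$ is a neighbor of $a$, so $X$ certifies complementedness.

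Part (2) admits a clean duality argument that in fact applies to any LCM lattice. Writing $L:=\L_{I(G)}$, LSM makes $L$ graded, so passing to the opposite lattice $L^{\mathrm{op}}$ exchanges meets with joins and converts the LSM rank inequality into the USM rank inequality. Coatomicity of $L$ becomes atomicity of $L^{\mathrm{op}}$. Hence $L^{\mathrm{op}}$ is an atomic, upper semimodular lattice, that is, geometric, and by Figure~\ref{fig:lattice:implications} it is complemented. Since complementedness is self-dual for bounded lattices, $L$ itself is complemented.

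For part (3), the analysis from part (1) still yields $N(v)=\{a,b\}$ for every $v\notin\{a,b\}$. Suppose $v,v'\notin\{a,b\}$ are distinct; then $\{v,v'\}\notin E(G)$ (since $v'\notin N(v)=\{a,b\}$), so the induced subgraph on $\{a,b,v,v'\}$ is $K_4$ minus the edge $\{v,v'\}$, i.e.\ a diamond. This contradicts the diamond-free characterization of LSM in Theorem~\ref{thm:lsm}. Therefore $|V(G)|\leq 3$, and excluding the star case $G\cong K_2$ forces $G\cong K_3=C_3$, whose LCM lattice is modular by Theorem~\ref{USS}. The only delicate step I foresee is framing the duality in part (2) correctly --- namely verifying that LSM transports to USM, that coatomic transports to atomic, and that complementedness is preserved under taking opposites --- but each of these is a routine check from the definitions.
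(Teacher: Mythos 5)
Your proof is correct; the interesting divergence from the paper is in part (2). For parts (1) and (3) you follow essentially the same path as the authors: supersolvability plus coatomicity pins down the ``book'' structure $E(G)=\{\{a,b\}\}\cup\bigcup_i\{\{a,u_i\},\{b,u_i\}\}$, and in (3) the diamond obstruction from Theorem~\ref{thm:lsm} collapses this to $K_3$ (or a star). The only difference in (1) is that the paper exhibits explicit complements ($w\prod_{i\notin A}x_i$, resp.\ $v\prod_{i\notin A}x_i$) rather than routing through the criterion of Proposition~\ref{prop:complemented:LCM:lattice}; your case split there does check out, since any $v\in\mathcal{I}(V(H))$ either equals $b$ (forcing $a\in V(H)$ and $b\in N(a)$) or has $N(v)=\{a,b\}\subseteq V(H)$. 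In part (2), however, you give a genuinely different and more general argument: the paper deduces from Theorems~\ref{thm:coatomic:lcm:lattices} and \ref{thm:lsm} that $G$ must be complete and then cites Proposition~\ref{prop:special:graphs}, whereas you observe that for \emph{any} finite lattice, LSM plus coatomic dualizes to USM plus atomic, i.e.\ the opposite lattice is geometric and hence complemented, and complementedness is self-dual. This is exactly the duality the authors themselves exploit in the proof of Corollary~\ref{cor:lsm+coatomic}, so your version of (2) buys a statement valid for arbitrary LCM lattices (indeed arbitrary finite lattices) at no extra cost, while the paper's version buys nothing beyond the edge-ideal case but keeps the whole proposition uniformly graph-theoretic.
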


\begin{proof}
\noindent \textit{(1)}: Suppose $\L_{I(G)}$ is supersolvable and coatomic.  If $G$ is a star graph, then $\L_{I(G)}$ is Boolean and hence complemented; so we may assume that $G$ is not a star graph.  By Theorem~\ref{thm:supersolvable}, $G$ has an edge $\{v,w\}$ that is adjacent to every other edge.  By Theorem~\ref{thm:coatomic:lcm:lattices}, there are no edges of degree one.  So we can take $V(G) = \{v,w,u_1,\ldots,u_n\}$ for some integer $n$ and $E(G) = \{\{v,w\}\} \cup \bigcup_{i = 1}^n\{\{v,x_i\},\{w,x_i\}\}$.  Thus, every non-bottom element of $\L_{I(G)}$ has the form $v \prod_{i \in A} x_i$, $w \prod_{i \in A} x_i$ or $vw \prod_{i \in A} x_i$, where $A \subseteq [n]$ is arbitrary.  For a monomial of the first type, $w \prod_{i \notin A} x_i$ serves as a complement element.  For a monomial of the latter two types,  $v \prod_{i \notin A} x_i$ is a complement.  It follows that $\L_{I(G)}$ is a complemented lattice.\\

\noindent \textit{(2)}: Suppose $\L_{I(G)}$ is lower semimodular and coatomic.  If $G$ is a star graph, then $\L_{I(G)}$ is Boolean and hence complemented; so again we may assume that $G$ is not a star graph.  By Theorem~\ref{thm:coatomic:lcm:lattices}, $G$ has no degree one vertices.  Thus, by Theorem~\ref{thm:lsm}, $G$ must be a complete graph.  By Proposition~\ref{prop:special:graphs}, $\L_{I(G)}$ is complemented.\\

\noindent \textit{(3)}: Suppose $\L_{I(G)}$ is supersolvable, lower semimodular, and coatomic.  By the parts \textit{(1)} and \textit{(2)}, $G$ is either a star graph, or it is a complete graph in which one edge is adjacent to every other edge.  Thus, $G = K_3$ and $\L_{I(G)}$ is modular by Theorem~\ref{USS}.
\end{proof}

\subsection{Free Resolutions and Regularity}\label{freeres}

In this section we observe that certain lattice properties of LCM lattices of edge ideals yield strong conclusions regarding their graded free resolutions.  Recall that the \term{regularity} of a graded $S$-module is $\reg(M) = \max\{j-i \mid \beta_{ij}(M) \neq 0 \text{ for some } i\}$.  It is not hard to see that the regularity of the edge ideal of a connected graph $G$ can be arbitrarily large when $\L_{I(G)}$ is either coatomic or complemented.  Indeed, consider the $n$-cycle $C_n$.  By Theorem~\ref{thm:coatomic:lcm:lattices} and Proposition~\ref{prop:special:graphs}, $\L(I(C_n))$ is both coatomic and complemented for all $n \ge 3$.  However, the regularity of the corresponding edge ideals is unbounded, since $\reg(S/I(C_n)) = \lfloor \frac{n+1}{3} \rfloor$ for all $n \ge 3$.

If $\L(I(G))$ is graded, then $I(G)$ is linearly presented by Theorem~\ref{GradedGraph}, meaning $\beta_{2,4}(S/I(G)) = 0$.  These correspond to the gap-free graphs.  It is interesting that even for this class, the regularity can be arbitrarily large; see \cite{CKV16}.  Below we observe that for edge ideals with either supersolvable or lower semimodular LCM lattices, the corresponding resolutions are linear, that is, $\reg(S/I(G)) = 1$.  

\begin{thm} Let $G$ be a simple, connected, nontrivial graph.  If $\L(I(G))$ is supersolvable or lower semimodular, then $I(G)$ has a linear free resolution.  
\end{thm}

\begin{proof}
    By a well-known theorem of Fr\"oberg, $I(G)$ has a linear free resolution if and only if the complement graph $\bar{G}$ is chordal, see e.g. \cite[Theorem 9.2.3]{Herzog11}.  If $\L(I(G))$ is supersolvable, $G$ has a single edge adjacent to all other edges by Theorem~\ref{thm:supersolvable}.  Write $V(G) = \{v_1,\ldots,v_n\}$, where the edge $\{v_1,v_2\}$ is adjacent to every other edge.  Then $\bar{G}$ contains a complete subgraph on $\{v_3,v_4,\ldots,v_n\}$.  The additional vertices $v_1$ and $v_2$ cone over those vertices they are not connected to in $G$.  Any cycle in $G$ of length at least 4 then has at least 2 nonconsecutive vertices in $\{v_3,\ldots,v_n\}$ and thus has a chord.

    Now suppose that $\L(I(G))$ is lower semimodular.  By Theorem~\ref{thm:lsm}, $G$ contains a clique $H$ such that every vertex not in $H$ is adjacent to a unique vertex of $H$.  Write $V(G) = \{v_1,\ldots,v_n\} \sqcup \{w_1,\ldots,w_m\}$, where $V(H) = \{v_1,\ldots,v_n\}$.  Thus, $\bar{G}$ contains a clique on vertices $\{w_1,\ldots,w_m\}$.  Each of the vertices $v_1,\ldots,v_n$ is connected to a subset of the vertices in the clique and not to each other.  Once again, any cycle of length at least 4 has nonconsecutive vertices in $\{w_1,\ldots,w_m\}$ and thus has a chord.
\end{proof}

  It follows from \cite[Theorem 10.2.6]{Herzog11} that all powers of $I(G)$ also have linear free resolutions in this case.  In particular, edge ideals of connected graphs with geometric or modular LCM lattices have linear free resolutions and linear powers as well.

\section{LCM Lattices of Gorenstein Monomial Ideals}\label{sec:Gorenstein}

The LCM lattice of a monomial complete intersection is Boolean.  It is natural to study necessary and sufficient LCM lattice conditions for Gorenstein monomial ideals.  Given that the modular lattices, such as those in Example~\ref{Fano-Plane}, are Cohen-Macaulay but rarely Gorenstein, there seem to be few sufficient conditions on the LCM lattice of a monomial ideal to force the Gorenstein condition.

It is also easy to see that assuming $S/I$ is Cohen-Macaulay does not readily guarantee nice lattice properties of $\mathcal{L}_I$.  Indeed, if $G = P_4$, then $S/I(G)$ is Cohen-Macaulay, but $\mathcal{L}_{I(G)}$ is neither coatomic nor complemented; this follows easily from Theorem~\ref{thm:coatomic:lcm:lattices} and Theorem~\ref{prop:special:graphs}.  The following example shows that even if $S/I(G)$ is Gorenstein, $\mathcal{L}_{I(G)}$ need not even be graded.

\begin{example}
        Consider the graph $G$ whose edge set defines a triangulation of a cube:
    \[E(G) = \{12,23,34,14,56,67,78,58,15,26,37,48,13,25,27,45,47,68\}.\]
    Above, $12$ is shorthand for $\{1,2\}$.  Then $I(\bar{G})$ is a Gorenstein edge ideal.  It is easy to check that $\bar{G}$ is connected and that $2547$ is an induced $4$-cycle in $
    G$.  Therefore, $\mathcal{L}_{I(\bar{G})}$ is not graded by Theorem~\ref{GradedGraph}.
\end{example}

\noindent It follows that LCM lattices of Gorenstein monomial ideals need not be supersolvable, upper or lower semimodular, modular, distributive, or Boolean.  However, unlike the Cohen-Macaulay setting, it is plausible that they are always coatomic and complemented.  We record this in the following question.

\begin{question} Let $I$ be a monomial ideal in a polynomial ring $S$ such that $S/I$ is Gorenstein.   Is $\mathcal{L}_{I}$ coatomic?  Is $\mathcal{L}_{I}$ complemented?
\end{question}

\noindent Initial computations indicate the answer may be positive. 
To further support the question, we use the results from the previous section to show that Gorenstein edge ideals have coatomic LCM lattices.

First we recall the basics of Stanley-Reisner theory.  Let $I \subseteq \kk[x_1,\ldots,x_n]$ be a squarefree monomial ideal.  The Stanley-Reisner simplicial complex associated to $I$ is
\[\Delta_I = \{F \subseteq [n] \mid \underline{x}^F \notin I\}.\]
Here $\underline{x}^F = \prod_{j \in F} x_j$.  Conversely, given a simplicial complex $\Delta$ on ground set $[n]$, the associated squarefree monomial ideal is
\[I_\Delta = (\underline{x}^F \mid F \notin \Delta)\subseteq \kk[x_1,\ldots,x_n].\]
Given a simplicial complex $\Delta$ on  $[n]$ and a subset $H \subseteq [n]$, the restriction of $\Delta$ to $H$ is $\Delta |_H = \{H \cap F \mid F \in \Delta\}$.  The (multi)graded Betti numbers of a squarefree monomial ideal $I$ can be computed via Hochster's formula in terms of the cohomology of restrictions of the Stanley-Reisner complex: 
\[\beta_{i,m}^S(S/I) = \dim_\kk \tilde{H}^{|m|-i-1}(\Delta_I|_m,\kk).\]
Here, we identify a subset $m \subseteq [n]$ with the squarefree multidegree that is $1$ in position $i$ when $i \in m$ and $0$ otherwise.  See \cite[Corollary 7.13]{FMS14}.

\begin{thm} Let $G$ be a simple, connected, nontrivial graph.  If $I(G) \subseteq S = \kk[x_1,\ldots,x_n]$ is Gorenstein, then $\mathcal{L}_{I(G)}$ is coatomic.
\end{thm}

\begin{proof}
    Suppose $I = I(G)$ is Gorenstein.  Fix a variable $x_i$ and consider the graded short exact sequence
    \[0 \to S/(I:x_i)(-1) \xrightarrow{x_i} S/I \to S/(I + (x_i)) \to 0.\]
    Let $c = \pd(S/I) = \hgt(I)$.  By \cite[Theorem 5.1]{Stanley96}, we may assume that $\beta^S_{c,n}(S/I) = 1$.  Since $S/I$ is Gorenstein and since $I$ contains no linear forms, $\beta_{c-1,n-1}(S/I) = 0$.  By \cite[Lemma 1.1]{Lyubeznik88}, $\pd(S/(I:x_i)) = c$ so that $S/(I:x_i)$ is  Cohen-Macaulay.  (See also \cite[Lemma 5.1]{DHS13}.) We claim that $S/(I:x_i)$ is Gorenstein.  Note that by the long exact sequence for $\Tor$, $\pd(S/(I+(x_i)) \le c+1$.  Write $\Delta' = \Delta|_{[n]\smallsetminus i}$.  Then $\Delta_{I + (x_i)} = \Delta'$.
    For any squarefree multidegree $m$, by above we have
\begin{align*}
\beta_{c+1,m}(S/(I + (x_i))) &= \dim_\kk \tilde{H}^{|m| - c}(\Delta'|_m,\kk)\\
&= \begin{cases}
    \dim_\kk \tilde{H}^{|m| - c }(\Delta|_m,\kk) = \beta_{c+1,m}(S/I) = 0 & \text{ if } i \notin m\\
    \dim_\kk \tilde{H}^{|m| - c}(\Delta|_{m \smallsetminus i},\kk) = \beta_{c,m\smallsetminus i}(S/I) = 0 & \text{ if } i \in m.\\
\end{cases}
    \end{align*}
Thus $\pd(S/(I + (x_i))) = c$.  By the long exact sequence of $\Tor$ we have that for every integer $d$,
\[0 = \Tor_{c+1}(S/(I + (x_i)),\kk)_d \to \Tor_c(S/(I:x_i),\kk)_{d-1} \to \Tor_c(S/I,\kk)_d\]
is exact.  Since $\dim_\kk \Tor_c(S/I,\kk)_d = \beta_{c,d}(S/I) = 0$ for $d \neq n$, and $\beta_{c,n}(S/I) = 1$, it follows that $$\beta_{c,n-1}(S/(I:x_i)) = \beta_{c}(S/(I:x_i)) = 1;$$ in particular, $S/(I:x_i)$ is Gorenstein.

 Now, toward a contradiction, suppose that $\mathcal{L}_{I(G)}$ is not coatomic.  Then, by Theorem~\ref{thm:coatomic:lcm:lattices}, $G$ is not a star graph and $G$ has a vertex $i$ of degree $1$. Thus we may assume that the unique neighbor $j$ of the vertex $i$ has at least one additional neighboring vertex.  Since vertex $i$ has a unique neighbor $j$, $I:x_i = I + (x_j)$, which is Gorenstein by above.  Thus we have a graded short exact sequence
 \[0 \to S/(I:x_j)(-1) \to S/I \to S/(I + (x_j)) \to 0.\]
Once again $\pd(S/I) = \pd(S/(I:x_j)) = \pd(S/(I + (x_j))) = c$.  Since $S/(I + (x_j))$ is Gorenstein, $\beta_c(S/(I+(x_j))) = 1$.  Since $S/I$ is Gorenstein and since $I$ contains no linear forms, $\beta_{c,n}(S/I) = 1$ and $\beta_{c-1,n-1}(S/I) = 0$.  By the long exact sequence of $\Tor$, we have
\[\Tor_c(S/(I+(x_j),\kk)_{n-1} \to \Tor_{c-1}(S/(I:x_j),\kk)_{n-2} \to \Tor_{c-1}(S/I,\kk)_{n-1} = 0\]
is exact.  Since $I:x_j$ contains at least 2 variables, the free resolution of $S/(I:x_j))$ is a Koszul complex on those variables tensored with the free resolution of the nondegenerate part of the ideal.  Therefore,
$$\dim_\kk \Tor_{c-1}(S/(I:x_j),\kk)_{n-2} = \beta_{c-1,n-2}(S/(I:x_j)) \ge 2 \cdot \beta_{c,n-1}(S/(I:x_j)) = 2,$$ while 
$$\dim_\kk \Tor_c(S/(I+(x_j)),\kk)_{n-1} = \beta_{c,n-1}(S/(I + (x_j)) \le \beta_{c}(S/(I + (x_j)) = 1,$$ which contradicts the exactness above.  It follows that $\mathcal{L}_{I(G)}$ must be coatomic.
\end{proof}

It is not clear how to extend the previous result to arbitrary Gorenstein monomial ideals.  That their LCM lattices should also be complemented is natural, given that any element of the LCM lattice that supports nonzero homology has a complement by the duality of the free resolution; however, there are often elements of the LCM lattice that do not support homology and thus do not correspond to nonzero graded Betti numbers.  Finding complements of those elements would be key to proving such lattices are complemented.

\section*{Acknowledgements} The authors thank Jonas Hartwig and Ryan Martin for valuable feedback, and Vinh Nguyen for suggesting the connection with matroid cover ideals, leading to  Theorem~\ref{cover}.  The authors also thank the anonymous referees whose comments notably led to the addition of subsection~\ref{freeres} and section~\ref{sec:Gorenstein}. Both authors were partially supported by National Science Foundation grant  DMS--2401256.

\end{spacing}

\bibliographystyle{alphaurl}
\bibliography{lcm}

\end{document}